\newcommand{\norm}[1]{\lVert#1\rVert}
\newcommand*{\rom}[1]{\expandafter\@slowromancap\romannumeral #1@}
\newtheorem{theorem}{Theorem}
\numberwithin{equation}{section}
\newtheorem{lemma}{Lemma}
\newtheorem*{remark}{Remark}
\begin{document}

\begin{frontmatter}



\title{On the accuracy of stiff-accurate diagonal implicit Runge-Kutta methods for finite volume based Navier-Stokes equations}


\author[address1,address2]{Jiawei Wan\corref{cor1}}
\ead{jiawei.wan@outlook.com}

\author[address1]{Ahsan Kareem}
\author[address2]{Haili Liao}
\author[address3]{Yunzhu Cai}

\address[address1]{University of Notre Dame, Nathaz Modelling Laboratory, Notre Dame, United States}
\address[address2]{Southwest Jiaotong University, School of Civil Engineering, Chengdu, China}
\address[address3]{Tongji University, School of Civil Engineering, Shanghai, China}

\cortext[cor1]{Corresponding author.}

\begin{abstract}

The paper aims at developing low-storage implicit Runge-Kutta methods which are easy to implement and achieve higher-order of convergence for both the velocity and pressure in the finite volume formulation of the incompressible Navier-Stokes equations on a static collocated grid. To this end, the effect of the momentum interpolation, a procedure required by the finite volume method for collocated grids, on the differential-algebraic nature of the spatially-discretized Navier-Stokes equations should be examined first. A new framework for the momentum interpolation is established, based on which the semi-discrete Navier-Stokes equations can be strictly viewed as a system of differential-algebraic equations of index 2. The accuracy and convergence of the proposed momentum interpolation framework is examined. We then propose a new method of applying implicit Runge-Kutta schemes to the time-marching of the index 2 system of the incompressible Navier-Stokes equations. Compared to the standard method, the proposed one significantly reduces the numerical difficulties in momentum interpolations and delivers higher-order pressures without requiring additional computational effort. Applying stiff-accurate diagonal implicit Runge-Kutta (DIRK) schemes with the proposed method allows the schemes to attain the classical order of convergence for both the velocity and pressure. We also develop two families of low-storage stiff-accurate DIRK schemes to reduce the storage required by their implementations. Examining the two dimensional Taylor-Green vortex as an example, the spatial and temporal accuracy of the proposed methods in simulating incompressible flow is demonstrated.

\end{abstract}

\begin{keyword}
incompressible Navier-Stokes equations  \sep
finite volume method \sep
static collocated grid \sep
momentum interpolation \sep
differential-algebraic equations \sep
Runge-Kutta method



\end{keyword}

\end{frontmatter}


\section{Introduction}\label{section1}

Runge-Kutta methods have always been a popular choice for the time-marching of the incompressible Navier-Stokes equations. Compared to multi-step methods, they combine high order with good stability, allow for adaptive time stepping and are self-starting. It is worth noting that Runge-Kutta methods are originally conceived for the solutions of ordinary differential equations (ODEs). In the case of the spatially-discretized Navier-Stokes system, the momentum equation is a set of differential equations while the continuity equations is a set of algebraic equations. Hence, the system consisting of those equations is apparently not an ordinary differential problem. In mathematics, the term differential-algebraic equation (DAE) was introduced to comprise differential equations with algebraic constraints. Moreover, the variables in a system of DAEs for which derivatives exist are referred as the differential components (e.g., the velocity), and the variables in the system for which no derivatives exists (e.g., the pressure) are referred as the algebraic components. Compared to a system of ODEs, the system of DAEs have distinctive characteristics and are generally more difficult to solve. It may happen that the orders of local accuracy and global convergence of a particular Runge-Kutta scheme for DAEs are lower than those for ODEs. 

The study of Runge-Kutta methods for differential-algebraic problems did not start until the late 1990s \citep{hairer1989}. However, along with the related theories being established, the application of Runge-Kutta methods in the temporal discretization of the incompressible Navier-Stokes equations was initiated, e.g. \cite{kim1985} and \cite{le1991}. Before the computational fluid dynamics (CFD) community became fully aware of the differential-algebraic nature of the Navier-Stokes equations, it had been the common practice to march the velocity in time as if the semi-discrete Navier-Stokes equations are a system of ODEs, and then solve a Poisson equation to enforce the divergence-free constraint. While such type of approaches usually attain the expected orders of convergence for both the velocity and pressure when applied to flow problems with time-independent boundary conditions (e.g. the periodic boundary condition), their order results can be unsatisfactory for flow problems with time-varying boundary conditions. This issue was seldom considered in the early works \citep{kim1985, le1991, pereira2001, nikitin2006} on the applications of Runge-Kutta methods in CFD. Before we address this issue, it is good to know a term known as the index was introduced in literature to classify the systems of differential-algebraic equations. The DAE systems that frequently arise in practice are of index 1, 2 and 3, and a same Runge-Kutta scheme usually leads to different orders of convergence in the three systems. According to the definition of the index presented in \cite{hairer1989}, the incompressible Navier-Stokes equations, when discretized in space using a finite difference method (or a finite element method, or a finite volume method on a staggered grid), form an index 2 differential-algebraic system. For the finite volume formulation of the incompressible Navier-Stokes equations on a collocated or non-staggered grid, this conclusion needs to be examined.

The most distinctive feature of the finite volume method on a non-staggered grid, compared to the one on a staggered grid, is that it requires the evaluation of velocities at cell-faces during the spatial discretization. The computed cell-face velocities are later utilized in the spatial discretization of the momentum and the continuity equation. It is common practice to compute cell-face velocities by a custom interpolation of the fully-discrete momentum equation known as the Rhie-Chow interpolation \citep{rhie1983}. Refining momentum interpolation based on Rhie-Chow's framework has always been a hot topic in computational fluid dynamics. However, most of the related works \citep{choi1999, yu2002, pascau2011, zhang2014, martinez2017, tukovic2018} mainly focused on obtaining time-step size-independent and under-relaxation factor-independent cell face velocities, which did not address how the momentum interpolation influences the differential-algebraic attribute of the system of the semi-discrete incompressible Navier-Stokes equations. The spatial convergence of the momentum interpolation and the effect of the momentum interpolation on the convergence results of temporal solutions are seldom discussed either. To address these issues, we develop a new framework for computing the cell-face velocity. The proposed framework sheds light on the role of cell-face velocities in semi-discrete incompressible Navier-Stokes system. The finite volume formulation of the incompressible Navier-Stokes equations, in corporate with the cell-face velocity computed by the proposed framework, can be formally viewed as a system of index 2.

The application of Runge-Kutta schemes (especially the explicit ones) for differential-algebraic problems is not as simple as it is for ordinary differential problems. The key issues encountered during the application include (but not limited to) the enforcement of algebraic constraints, the existence and uniqueness of Runge-Kutta solutions and the local accuracy and global convergence analysis. Several approaches were proposed for index 2 problems and the commonly used ones are the direct approach \citep{hairer1989}, the (partitioned) half-explicit methods \citep{brasey1993, murua1997} and the collocation methods \citep{wanner1991}. As its name implies, the direct approach approximates the solution of a differential-algebraic problem by directly applying an implicit Runge-Kutta (IRK) scheme to the discretization of differential equations with the algebraic equations regarding the discretized differential components being satisfied at the same time. In comparison, the half-explicit methods employ an implicit and an explicit Runge-Kutta scheme simultaneously during the solution of index 2 problems. The explicit Runge-Kutta scheme deals with the discretization of differential equations, while the implicit Runge-Kutta scheme handles the enforcement of algebraic constraints. Compared to the half-explicit methods, the direct approach is relatively easier to implement and requires less storage. However, it is in general more computational expensive than the half-explicit methods at the same orders of accuracy.

The accuracy and convergence results of the direct approach and the half-explicit methods were summarized in details in \cite{hairer1989} and \cite{wanner1991}. The related theoretical findings indicate that, in addition to the classical order conditions, extra order conditions on the Runge-Kutta coefficients of a particular scheme needs to be satisfied accordingly for this scheme to guarantee a certain order of accuracy, when it is applied to general non-linear index 2 problems. Also note that the order results for differential and algebraic components may not necessarily be the same. It is usually more difficult for a Runge-Kutta scheme to attain higher-order algebraic components than differential components. For example, diagonal implicit Runge-Kutta (DIRK) schemes (i.e., the IRK scheme with a lower triangle coefficient matrix) can only attain (at most) second order and first order convergences, respectively, for the differential and algebraic components of a general non-linear index 2 problem. This implies that DIRK schemes may not be a good choice for index 2 problems in terms of orders of accuracy. However, DIRK schemes are actually quite popular in solving ordinary differential equations due to their advantages in computational efficiency when compared to the fully implicit IRK schemes. A remedy to improve the order of convergence for DIRK-type of schemes (i.e., the schemes that allow the discretized equations at different Runge-Kutta internal stages to be solved successively) was proposed by \cite{jay1993} in which the diagonal implicit Runge-Kutta schemes with an explicit first-stage (ESDIRK) was considered.

Fortunately, the system of the semi-discrete incompressible Navier-Stokes equations on a stationary mesh can actually be viewed as a special index 2 problem in which the partial derivatives of the algebraic equations with respect to the differential components are constants in this system. An direct consequence of this property is that the order conditions for a scheme to attain a certain order of accuracy are less strict (mainly in quantities) than those for general non-linear index 2 problems. As a result, it now becomes possible for a DIRK scheme to attain the classical order of convergence for velocities, and for pressures as well if the boundary conditions for velocities are time-independent. However, if the boundary conditions for velocities are time-varying, the convergence order for pressures delivered by DIRK schemes can still be first order only. Considering a temporal accurate pressure is of major interest in many CFD applications (e.g., the estimation of the aerodynamic coefficients of a rigid model), an increasing amount of efforts were made to develop Runge-Kutta methods which are capable of delivering higher-order temporal accurate velocities and pressures.

One way to achieve this goal is to treat the semi-discrete incompressible Navier-Stokes equations as a general non-linear index 2 problem and seek for potential prescriptions among the fully-implicit Runge-Kutta methods, the ESDIRK methods and the half-explicit methods. Applying the half-explicit methods to the semi-discrete incompressible Navier-Stokes equations, one plausible attempt by \cite{sanderse12} examined the convergence results of the velocity and pressure. They also investigated the possibility of attaining higher-order accurate pressures without requiring an additional solution of the pressure Poisson equation. The effect of unsteady boundary conditions and deforming meshes on the orders of convergence was also considered in their study. Another class of approaches comes from the idea of projecting the semi-discrete incompressible Navier-Stokes equations onto a discrete divergence-free space. The system of the projected Navier-Stokes equations is a system of ODEs. Consequently, the application of Runge-Kutta schemes becomes straightforward and the convergence results for both the velocity and pressure are determined by the classical order of the Runge-Kutta scheme. In the trial by \cite{colomes2016}, they developed a family of implicit-explicit Runge-Kutta (IMEXRK) schemes and applied it to the projected Navier-Stokes system. Their method works fine if the boundary conditions for the continuity equation are time-independent or $n$-th order differentiable with $n$ lower than the classical order of the Runge-Kutta scheme.  If this assumption on boundary conditions is not satisfied, the divergence-free constraint will no longer be satisfied due to the time integration error, though the numerical results show the right order of convergence.

In \cite{sanderse12}, the incompressible Navier-Stokes equations were discretized in space using a finite volume method on a staggered grid, while \cite{colomes2016} employed a finite element differencing method. An issue that emerges immediately when applying their methods or any other higher-order time-integrators to the incompressible Navier-Stokes equations on a collocated finite volume grid is the computation of cell-face velocities at the Runge-Kutta internal stages. As noted in \cite{choi1999}, the momentum interpolation gets complicated with the amounts of interpolations and additional storage required by higher order methods. The situation may get worse for approaches such as the half-explicit methods and the IMEXRK methods due to the complexities of their formulation. The latest studies on the application of Runge-Kutta methods to the incompressible Navier-Stokes equations on a collocated finite volume grid, e.g., \cite{vuorinen2014} and \cite{valerio2018}, mainly focused on implementing the existing methods using open source codes. Less attention was paid to the momentum interpolation at the Runge-Kutta internal stages. The effect of the momentum interpolation on the differential-algebraic attribute of the semi-discrete Navier-Stokes equations and how to reduce the cost of mathematical calculation required by the momentum interpolation were not discussed. Another issue seldom addressed in the application of Runge-Kutta methods to CFD is the storage consumption which often turns out be a computational bottleneck for the solution of high-dimensional Navier-Stokes systems. Low-storage Runge-Kutta methods for ordinary differential problems were discussed in details in \cite{kennedy2000} for explicit schemes and recently in \cite{cavaglieri2015} for implicit-explicit schemes. Apparently, the same discussion is also necessary in our case.

To address the aforementioned problems successively, we start with tackling the numerical difficulties in  momentum interpolations. As we shall demonstrate later, the Runge-Kutta schemes with non-singular coefficient matrix are desirable, since a simple modification of the conventional formulation of those schemes can make the momentum interpolations at Runge-Kutta internal stages much easier to proceed. This finding leads us to the fully implicit schemes or the DIRK schemes. Considering the implementation of fully implicit schemes is computational demanding and requires a Newton-type of non-linear equation solver (which in practice can be a major obstacle), we prefer to the DIRK schemes here. As we have mentioned earlier, when applied directly to the semi-discrete incompressible Navier-Stokes equations with time-varying velocity boundaries, DIRK schemes can only attain first order temporal accurate pressures at most. To improve the convergence order for the pressure, we propose a new method of applying implicit Runge-Kutta schemes which also incorporates the modification that helps reducing the numerical difficulties in momentum interpolations. The convergence results of the proposed method is then analyzed mathematically. Compared to the direct approach, the proposed method delivers higher-order temporal-accurate pressures without requiring additional computational effort. Moreover, the proposed method allows stiff-accurate DIRK schemes to attain the classical order of convergence for both the velocity and pressure. Finally, two families of low-storage stiff-accurate diagonally implicit Runge-Kutta schemes are developed to further reduce the storage required by the proposed method.

The outline of this paper is as follows. In Section \ref{section2}, we discuss the finite volume formulation of the incompressible Navier-Stokes equations on a stationary collocated grid and introduce a new framework for the momentum interpolation. In Section \ref{section3}, we compare the formulations and the convergence results of the direct approach and the proposed method for the index 2 system of the semi-discrete incompressible Navier-Stokes equations. The low storage implementation of the proposed method is presented afterwards. The discussion on the solution algorithms of the fully-discrete non-linear Navier-Stokes equations at Runge-Kutta internal stages and on the effect of residual errors on convergence results is presented in Section \ref{section4}. In Section \ref{section5}, we take the two dimensional Taylor-Green vortex to as an example to validate the proposed momentum interpolation framework and Runge-Kutta method.

\section{Spatial Discretization of the Incompressible Navier-Stokes Equations} \label{section2}

In this section, we will discuss the spatial discretization of the incompressible Navier-Stokes equations employing a second-order finite volume method on a cell-centered collocated grid arrangement. The main concern will be the effect of the momentum interpolation on the differential-algebraic attribute of the semi-discrete Navier-Stokes system, for which a new framework for the computation of cell-face velocities is developed. The accuracy and convergence of the proposed momentum interpolation framework will also be investigated.

\subsection{Spatial discretization of the momentum and continuity equations} \label{section2.1}

To begin with, we have the three-dimensional incompressible momentum and continuity conservation equations for Newtonian fluids given as follows:

\begin{equation} \label{NS_partial}
\begin{split}
\frac{\partial \boldsymbol{u}}{\partial t} + \nabla \boldsymbol{\cdot} (\boldsymbol{u} \boldsymbol{u}) &= \nabla \boldsymbol{\cdot} (\nu \nabla \boldsymbol{u}) - \nabla p, \\
\nabla \boldsymbol{\cdot} \boldsymbol{u} &= 0,
\end{split}
\end{equation}

\noindent where $\boldsymbol{u}$ is the fluid velocity, $p$ represents the kinematic pressure and $\nu$ denotes the kinematic viscosity. For the spatial discretization of (\ref{NS_partial}), we utilize a second-order finite volume method established on a collocated (or non-staggered) grid system. Thus the system of the semi-discrete equations can be written as :

\begin{subequations} \label{NS_FV}
\begin{gather}
u'(t)= \big\{K + N(\bar{u})\big\} u(t) - Gp(t) + b(t), \label{fdnse}\\
D\bar{u}(t) = r(t), \label{fdce} \\
\intertext{where we use the notation}
\quad \big\{K + N(\bar{u})\big\} \coloneqq \text{blockdiag}\left(K + N(\bar{u}),\ldots, K + N(\bar{u})\right). \nonumber
\end{gather}
\end{subequations}

\noindent In (\ref{NS_FV}) and for later use, $u(t)\in\mathbb{R}^{3m}$ and $p(t)\in\mathbb{R}^{m}$ (where $m$ is the number of the cells for a given mesh) are the vectors storing the average values of the velocity and pressure over the cells. $\bar{u}(t)\in\mathbb{R}^{3\bar{m}}$ (where $\bar{m}$ is the number of the cell-faces) denotes the vector collecting the discrete values of velocities at the cell-faces. The elements of $u$ and $\bar{u}$ are sorted in the form of

\begin{equation}\label{upartition}
u \coloneqq \big( u_{(1)}, u_{(2)}, u_{(3)}\big)^T,\quad \bar{u} \coloneqq \big(\bar{u}_{(1)}, \bar{u}_{(2)}, \bar{u}_{(3)}\big)^T, \nonumber
\end{equation}

\noindent where $u_{(i)}(t) \in \mathbb{R}^{m}$ and $\bar{u}_{(i)}(t) \in \mathbb{R}^{\bar{m}}$ ($i=1,2,3$) are the vectors storing the three components of velocities at the cell centroids and face centroids, respectively. Note that all of the vectors that represent a discrete vector field are sorted in a similar manner in this study. $G\in\mathbb{R}^{3m \times m}$ and $D\in \mathbb{R}^{m \times 3\bar{m}}$ are the matrix representation of the operators that return the gradient and the divergence at cell centroids, respectively. $K$ and $N(\bar{u}) \in \mathbb{R}^{m \times m}$ are the matrices of coefficients resulting from the discretization of diffusion and convection terms. $b(t) \in \mathbb{R}^{3m}$ and $r(t) \in \mathbb{R}^{m}$ are the vectors storing the source terms arising from the velocities at mesh boundaries. Note that $K$, $G$ and $D$ are constant on stationary mesh which will be assumed from now on. We also use the braces $\{*\}$ to denote the operator that returns a block diagonal matrix of the matrix in the braces. One might get confused with the expression (\ref{fdnse}), especially the term related to the block diagonal matrix operator. To make it easier to understand, we rewrite (\ref{fdnse}) as

\begin{equation}\label{fdnseAppend}
\left(
\begin{matrix}
u'_{(1)} \\
u'_{(2)} \\
u'_{(3)}
\end{matrix}
\right)
= 
\left(
\begin{matrix}
K + N & 0 & 0 \\
0 & K + N & 0 \\
0 & 0& K + N
\end{matrix}
\right)
\left(
\begin{matrix}
u_{(1)} \\
u_{(2)} \\
u_{(3)}
\end{matrix}
\right)
+
\left(
\begin{matrix}
G_{(1)} \\
G_{(2)} \\
G_{(3)}
\end{matrix}
\right)
p
+
\left(
\begin{matrix}
b_{(1)} \\
b_{(2)} \\
b_{(3)}
\end{matrix}
\right),
\end{equation}

\noindent where $G_{(i)}\in \mathbb{R}^{m\times m}$ ($i=1,2,3$) is obtained by partitioning $G$ into $G \coloneqq (G_{(1)},G_{(2)},G_{(3)})^T$. 

Now, it is very clear that the equation (\ref{fdnse}) is actually composed of three differential equations with each equation being the semi-discrete momentum equation regarding one component of cell-center velocities. We advocate the use of (\ref{fdnse}) rather than (\ref{fdnseAppend}) for it is simple and addresses the fact that the semi-discrete momentum equations regarding different components of cell-center velocities share the same coefficient matrices $K$ and $N$ resulting from the spatial discretization of diffusion and convection terms. Also note that the conventional finite volume formulation of the semi-discrete momentum equation should have a term $\{V\}$ (where $V\in \mathbb{R}^{m\times m}$ is a diagonal matrix of cell volumes) on the left hand side (LHS) of the $u'$ term, in this study however, we eliminate this term for simplicity by multiplying $\{V^{-1}\}$ to the both sides of the original equation. Consequently, the $K$, $N$ and $b$ terms have included the inverse of the cell volumes in them. Finally, it should be emphasized again the $u$, $\bar{u}$ and $p$ terms in the semi-discrete momentum equation (\ref{NS_FV}) are discrete fields rather than the continuous fields $\boldsymbol{u}$ and $p$ in the conservative form of the momentum equation (\ref{NS_partial}). The system of equations (\ref{NS_FV}) are not sufficient enough to define a closed form solution and cannot be solved at this stage. Additional equations with respect to the cell-face velocity field $\bar{u}$ should be introduced to (\ref{NS_FV}) to make the system solvable.

\subsection{The Rhie and Chow momentum interpolation} \label{section2.2}

It is common practice to compute cell-face velocities by a custom interpolation of the momentum equation known as the Rhie and Chow interpolation. It is noted that in Rhie and Chow's method and also in other momentum interpolation methods involved in literatures, the momentum equation to be interpolated specifically refers to the fully-discrete momentum equation rather than the semi-discrete momentum equation. For example, if we employ a first-order implicit Euler method for the time-marching of (\ref{fdnse}) from $t_{n}$ to $t_{n+1} = t_n +h$ (where $h$ is the time-step size) and do not consider an under relaxation procedure, we will have

\begin{equation} \label{Euler}
u_{n+1}= u_{n} + h \left\{K+N_{n+1}\right\} u_{n+1} - hGp_{n+1} + hb_{n+1},
\end{equation}

\noindent where the subscript $n$ designates the $n$-th time-instance $t_{n}$ here and in the remainder of this paper. For simplicity, we also write $N_{n+1}$ and $b_{n+1}$ for $N(\bar{u}_{n+1})$ and $b(t_{n+1})$, respectively. With the semi-discrete momentum equation discretized temporally, Rhie and Chow's momentum interpolation method approximates the $\bar{u}_{n+1}$ field through a custom interpolation of the fully-discrete momentum equation given by (\ref{Euler}). The interpolation schemes supported by Rhie and Chow's method as well as other momentum interpolation methods are usually expressed locally in terms of a specific element in $\bar{u}_{n+1}$, in this study however, we seek to formulate the interpolation scheme with respect to the whole cell-face velocity field $\bar{u}_{n+1}$. To achieve this purpose some notations are introduced first. 

First, we use the terms cell-centered and face-centered fields to denote the discrete fields defined  at the centroids of the cells and cell-faces of a collocated grid, respectively. Subsequently, we refer the spatial interpolation of a scalar or vector field from cell centroids to cell-face centroids as the ``face interpolation". Then, we denote the elements of discrete scalar fields by superscripts of lower case letters, e.g., the $i$-th element of a cell-centered scalar field $\phi\in\mathbb{R}^m$ is denoted by $\phi^i$ (where $i=1(1)m$). To distinguish with scalar elements, we denote the elements of discrete vector fields (which are vectors) by superscripts of bold face lower case letters.  For example, the $i$-th element of a cell-centered vector field $\varphi\in \mathbb{R}^{3m}$ is denoted by

\begin{equation}
\varphi^{\boldsymbol{i}} \coloneqq \big(\varphi_{(1)}^i,\varphi_{(2)}^i,\varphi_{(3)}^i\big)^T. \nonumber
\end{equation}

\noindent Note that the above mentioned $\varphi^{\boldsymbol{i}}$ and $\phi^i$ terms can be regarded as the vector and scalar stored at the cell with the index $i$ (referred to as cell $i$ hereafter). Similarly, the $i$-th element of a face-centered variable field corresponds to the value of this variable at the cell-face with the index $i$ (referred to as face $i$ hereafter).

To express the conduction of the face interpolation in a more compact way, we define $\eta (\phi): \phi \in \mathbb{R}^{m} \rightarrow\mathbb{R}^{\bar{m}}$ as the function that returns a face-centered scalar field through the spatial interpolation of a cell-centered scalar field $\phi$ with a specific interpolation scheme. The commonly used interpolation schemes include the second-order linear interpolation scheme (also known as the central differencing scheme), the second order FROMM scheme \citep{fromm1968} and the third-order QUICK scheme \citep{leonard1979}. It is noted that those interpolation schemes are based on the polynomial interpolation method. When they are utilized for the $\eta$ function, each element of the output face-centered scalar field will be evaluated as a linear combination of the corresponding elements in the input cell-centered scalar field. Therefore, if the $\eta$ function employs a polynomial-based interpolation scheme, it can be expressed as

\begin{equation} \label{interpolation1}
\eta(\phi) = L\phi \quad \text{or} \quad \eta^i(\phi) = \sum_{j\in S_i}l_{ij}\phi^j \ \  \text{for} \ \ i = 1(1)\bar{m}, 
\end{equation}

\noindent where $L \in \mathbb{R}^{\bar{m}\times m}$ is a matrix of coefficients determined by the geometry of the mesh and the order of the polynomials. $l_{ij}$ is the element on in the $i$-th row and $j$-th column of $L$, and $S_i$ is the set of the indices of the non-zero elements in the $i$-th row of $L$. Note that $S_i$ can also be regarded as the set of the indices of the elements of the $\phi$ field whose values are required for the local interpolation of the $\phi$ field to face $i$.

Based on the definition of the $\eta$ function, we introduce the notation $\boldsymbol{\eta}(\varphi):\varphi \in\mathbb{R}^{3m} \rightarrow\mathbb{R}^{3\bar{m}}$ (the bold $\eta$ symbol) to denote the function that returns a face-centered vector field through the face interpolation of a cell-centered vector field by interpolating its three components respectively

\begin{equation} \label{interpolatedField}
\boldsymbol{\eta}(\varphi) = \big( \eta (\varphi_{(1)}), \eta (\varphi_{(2)}),\eta (\varphi_{(3)}) \big)^T \quad \text{or} \quad \boldsymbol{\eta}^{\boldsymbol{i}}(\varphi) = \big(\eta^i(\varphi_{(1)}), \eta^i(\varphi_{(2)}), \eta^i(\varphi_{(3)})\big)^T \ \  \text{for} \ \ i = 1(1)\bar{m}.
\end{equation}

\noindent If $\eta$ follows (\ref{interpolation1}), the vector field interpolation function $\boldsymbol{\eta}$ can be rewritten as

\begin{equation}
\boldsymbol{\eta}(\varphi) = \left\{L\right\}\varphi \quad \text{or} \quad \boldsymbol{\eta}^{\boldsymbol{i}}(\varphi) = \big(\sum_{j\in S_i}l_{ij}\varphi_{(1)}^j,\sum_{j\in S_i}l_{ij}\varphi_{(2)}^j,\sum_{j\in S_i}l_{ij}\varphi_{(3)}^j\big)^T \ \  \text{for} \ \ i = 1(1)\bar{m}.
\end{equation}

\noindent Finally, we use the notation $\text{diag}(*)$ to denote the operator that returns a diagonal matrix whose diagonal elements equal to the elements of the term in the brackets if the term in the brackets is a vector, or returns a vector whose elements are the diagonal elements of the term in the brackets if the term in the brackets is a matrix. Based on it, we define the symbols

\begin{equation}
A(\bar{u})\coloneqq\text{diag}\left(K+N(\bar{u})\right), \quad H(t,u,\bar{u})\coloneqq\left\{K+N(\bar{u})-\text{diag}\left(A(\bar{u})\right)\right\}u+b(t),
\end{equation}

\noindent where $A\in \mathbb{R}^m$ can be regarded a cell-centered scalar field storing the diagonal coefficients of $K$ and $N$, and $H\in \mathbb{R}^{3m}$ denotes a cell-centered vector field collecting the off-diagonal coefficients of $K$ and $N$ multiplied by the corresponding cell-center velocities and the source term $b$.

With the above notation ready, a matrix representation of Rhie and Chow's momentum interpolation procedure starts with rewriting the fully-discrete momentum equation (\ref{Euler}) as

\begin{equation} \label{EulerModified}
u_{n+1}= \big\{A^*\big\} \left(u_{n}+ hH_{n+1}\right) -h\big\{A^*\big\}Gp_{n+1}, \quad A^* \coloneqq \left(I-h\ \text{diag}(A_{n+1})\right)^{-1},
\end{equation}

\noindent where $A_{n+1} = A(\bar{u}_{n+1})$, $H_{n+1} = H(t_{n+1},u_{n+1},\bar{u}_{n+1})$ and the superscript $^{-1}$ denotes the inverse of a matrix. For the approximation of the face-centered velocity field $\bar{u}_{n+1}$, the Rhie and Chow interpolation gives

\begin{equation}  \label{Rhie&Chow}
\bar{u}_{n+1}= \big\{L\big\}\big\{A^*\big\}\big(u_{n}+ hH_{n+1}\big) -h\big\{\bar{A}^*\big\}\bar{G} p_{n+1},\quad \big\{\bar{A}^*\big\} \coloneqq  \text{diag}  \left(L\ \text{diag}\left(A^*\right)\right),
\end{equation}

\noindent where $\bar{G} \in \mathbb{R}^{3\bar{m}\times m}$ is the matrix representation of the cell-face gradient operator and the coefficients of $L$ are determined by the central differencing scheme. A worth-mentinoning property of the Rhie and Chow interpolation is that when (\ref{Rhie&Chow}) is applied to a steady-state flow problem, the converged solution is time-step size-dependent. This feature has been long realized in literature and a number of attempts, including the work of \cite{choi1999}, \cite{yu2002} and \cite{pascau2011}, were made to tackle it. These methods express the terms related to the cell-face counterparts of the $A$ and $H$ terms in different manners. For example, for the face interpolation of (\ref{Euler}), Choi's method gives

\begin{equation} \label{Choi}
\bar{u}_{n+1} = \big\{\bar{A}^*\big\}\bar{u}_{n}+h\big\{L\big\}\big\{A^*\big\}H_{n+1} - h\big\{\bar{A}^*\big\}\bar{G}p_{n+1},
\end{equation}

\noindent where the expressions of $A^*$ and $\bar{A}^*$ are consistent with their expressions in Rhie and Chow's method. In the meantime, the approach of \cite{yu2002} yields a formula in the form

\begin{equation} \label{Yu}
\bar{u}_{n+1}=  \big\{\bar{A}^*\big\} \left( \bar{u}_{n} +h\big\{L\big\}H_{n+1}-h\bar{G}p_{n+1}\right),\quad \bar{A}^* \coloneqq \left(I-h\ \text{diag}\left(LA_{n+1}\right)\right)^{-1}.
\end{equation}

\noindent Finally, a momentum interpolation scheme follows from the work by \cite{pascau2011} has the form of

\begin{equation}  \label{Pascau}
\bar{u}_{n+1} = \big\{\bar{A}^*\big\} \left[\bar{u}_n + h\Big\{\text{diag}\Big(\big(LA_{n+1}^{\circ \left(-1\right)}\big)^{\circ \left(-1\right)}\Big)\Big\}\Big\{L\Big\}\Big\{\text{diag}\big(A_{n+1}^{\circ \left(-1\right)}\big)\Big\}H_{n+1}-h\bar{G}p_{n+1}\right], \quad \bar{A}^* \coloneqq \Big(I-h\text{diag}\big((LA_{n+1}^{\circ \left(-1\right)})^{\circ\left(-1\right)}\big)\Big)^{-1},
\end{equation}

\noindent where the superscript $\circ\left(-1\right)$ denotes the Hadamard inverse. 

A common feature of Rhie and Chow's method and other related methods is that the main diagonals of the matrices $K$ and $N$ are extracted to $A$ during momentum interpolations. However, it is still questionable if this treatment will affect the convergence of the momentum interpolation or not. As a matter of fact, the issue of the convergence of the momentum interpolation has seldom been addressed in previous studies. Apart from the effect of the momentum interpolation on the spatial accuracy of the finite volume discretization, it is also difficult to say if and how the momentum interpolation influences the differential-algebraic nature of the semi-discrete incompressible Navier-Stokes equations in the aforementioned momentum interpolation methods.

\subsection{A new framework for momentum interpolation} \label{section2.3}

To address the aforementioned problems which were seldom been considered in literatures, we propose a new framework for the computation of the cell-face velocity in a collocated grid system. The proposed framework gives the insight on the role of cell-face velocities in the system of the semi-discrete incompressible Navier-Stokes equations, and consequently the effect of the momentum interpolation on spatial and temporal accuracy can be easily evaluated. The basic ideal behind the proposed framework is that we first compute the time-derivative of cell-face velocities through a customs interpolation of the semi-discrete momentum equation of cell-center velocities, and then integrate the semi-discrete equation of cell-face velocities with respect to time to obtain the temporally discretized cell-face velocity field. With this idea in mind, we now describe the details of the proposed framework following. 

Assume that a pseudo grid system is properly established for the velocities at cell-face centroids and we have all of the information required to formulate a finite volume representation of the semi-discrete momentum equation with respect to the cell-face velocity, thus we will have the time-derivative of the face-centered velocity field $\bar{u}$ given by

\begin{equation} \label{daem00}
\bar{u}' =  \big\{\bar{K}+\bar{N}\big\}\bar{u}+\bar{b} - \bar{G}p,
\end{equation}

\noindent where $\bar{K}$, $\bar{N}\in \mathbb{R}^{\bar{m} \times \bar{m}}$ and $\bar{b} \in \mathbb{R}^{3\bar{m}}$ are, respectively, the counterparts of the $K$, $N$ and $b$ terms in the semi-discrete momentum equation of cell-center velocities. In reality, the $\bar{K}$, $\bar{N}$ and $\bar{b}$ terms cannot be given explicitly, therefore we can only approximate the $\{\bar{K}+\bar{N}\}\bar{u}+\bar{b}$ term (which is a face-centered vector field) through a custom face interpolation of the $\{K+N\}u+b$ term (which is a cell-centered vector field) instead. Before we carry out the interpolation, we define the face-centered scalar field $\bar{A}\in \mathbb{R}^{\bar{m}}$ and the face-centered vector field $\bar{H}\in \mathbb{R}^{3\bar{m}}$ as

\begin{equation} \label{Anotation}
\bar{A} \coloneqq \alpha \circ \text{diag}(\bar{K})+\beta \circ \text{diag}(\bar{N}), \quad \bar{H} \coloneqq \big\{\bar{K}+\bar{N}-\text{diag}(\bar{A})\big\} \bar{u}+\bar{b},
\end{equation}

\noindent where $\alpha$ and $\beta$ are vectors collecting (possibly time-varying) scalar coefficients defined for each cell face with $\alpha^i,\beta^i\in [0,1]$, and the symbol $\circ$ denotes the Hadamard product. Note that $\alpha$ and $\beta$ can also be regarded as face-centered scalar fields. A detailed discussion on their roles and how they affect the accuracy of the proposed momentum interpolation framework will be presented later. At this stage, we shall continue the process. Inserting (\ref{Anotation}) into (\ref{daem00}) yields

\begin{equation}\label{daem0}
\bar{u}' =  \big\{\text{diag}(\bar{A})\big\} \bar{u}+\bar{H} - \bar{G}p.
\end{equation}

\noindent With the above equation ready, we now approximate the $i$-th element of the $\bar{A}$ and $\bar{H}$ fields with

\begin{subequations}\label{daem}
\begin{gather}
\bar{A}^i = \eta^i\big(A_i\big), \quad \bar{H}^{\boldsymbol{i}} = \boldsymbol{\eta}^{\boldsymbol{i}}\big(H_i\big) \\
\intertext{where $A_i\in \mathbb{R}^m$ and $H_i\in \mathbb{R}^{3m}$ are, respectively, the cell-centered scalar and vector fields defined as}
A_i \coloneqq \alpha^i\text{diag}(K)+\beta^i\text{diag}(N), \quad H_i \coloneqq \big\{K+N-\text{diag}(A_i)\big\}u+b.
\end{gather}
\end{subequations}

\noindent (\ref{daem0}) incorporate with (\ref{daem}) is the key formula of the proposed momentum interpolation framework and we name this evaluation of the time-derivatives of cell-face velocities as a generalized face interpolation of the semi-discrete momentum equation (GFISDM). The scheme of GFISDM offers the users the freedom in specifying the values of $\alpha$ and $\beta$, and the interpolation schemes used for the $\eta$ functions in it. Also note that one may use $\eta$ functions based on different schemes for the face interpolation of the $A_i$ and $H_i$ fields, respectively.

Apart from the difference of the stage at which the momentum interpolation is carried out there is one more significant difference between the proposed momentum interpolation framework and other momentum interpolation methods. In the other momentum interpolation methods, the original momentum equation of cell-center velocities, e.g. (\ref{Euler}), normally goes through a series of transformation to deliver a rearranged equation, e.g. (\ref{EulerModified}). Then, the momentum equation of cell-face velocities is assumed to be composed of terms which are identical to those of the rearranged momentum equation of cell-center velocities. Finally, the unknown terms in the momentum equation of cell-face velocities will be approximated through the interpolations of their counterparts in the equation of cell-center velocities. In those methods, the expressions of the terms to be interpolated are identical for each cell-face.

However, the GFISDM scheme starts with assuming that the form of the pseudo semi-discrete momentum equation regarding the cell-face velocity is similar to that of the cell-center velocity. Subsequently, the semi-discrete momentum equation of cell-face velocities is rearranged into the expression (\ref{daem0}) where we introduce the face-centered coefficient fields $\alpha$ and $\beta$ such that the unknown $\bar{A}$ and $\bar{H}$ fields of the rearranged semi-discrete equation of cell-face velocities is $\alpha$- and $\beta$-dependent. To approximate the $i$-th elements of the $\bar{A}$ and $\bar{H}$ fields, we divide the $\{K+N\}u+b$ field into the terms $\{\text{diag}(A_i)\}u$ and $H_i$ with the expression (\ref{daem}) where $A_i$ and $H_i$ are cell-centered fields to be interpolated locally to the centroid of face $i$. Therefore, if $\alpha$ and $\beta$ are not uniform, the $A_i$ and $H_i$ fields are not exactly the same for each cell-face. Note that this does not mean we have to compute the entire $A_i$ and $H_i$ fields respectively for each cell-face. It is because, in most cases, the local interpolation of $A_i$ and $H_i$ only requires their values at a small set of nodes. If the elements of $\alpha$ and $\beta$ are uniform, the $A_i$ and $H_i$ fields will be identical for each cell-face. Thus, in this situation, we can compute the whole $A_i$ and $H_i$ fields once for all, and interpolate them to cell-faces globally in the computational domain.

The proposed framework is not yet complete at this stage, the next step is to integrate the GFISDM scheme with respect to time to deliver the cell-face velocities at the discrete time point. To compare with the formulae of the Rhie-Chow interpolation and other momentum interpolation methods, we consider an implicit Euler method for the temporal discretization of the GFISDM scheme, this yields

\begin{equation}\label{faem}
\bar{u}_{n+1} =\big\{\bar{A}^*\big\}\bar{u}_{n} + h\big\{\bar{A}^*\big\}\bar{H}_{n+1} -h\big\{\bar{A}^*\big\}\bar{G}p_{n+1}, \quad \bar{A}^* \coloneqq \big(I-h\ \text{diag}\big(\bar{A}_{n+1}\big)\big)^{-1}.
\end{equation}

\noindent where $\bar{H}_{n+1}$ denotes the $\bar{H}$ field at $t_{n+1}$. One easily verifies (\ref{faem}) is identical to (\ref{Yu}) when both $\alpha$ and $\beta$ are constants and equal to $\mathbbm{1}$ (where $\mathbbm{1}$ denotes a vector of elements one), and the $\eta$ function has the form of (\ref{interpolation1}). From this point of view, the discretized face-centered velocity field derived from the proposed framework can be regarded as a generalization of the scheme presented in \cite{yu2002}.

The semi-discrete momentum equation of cell-center velocities (\ref{fdnse}), the discretized continuity equation (\ref{fdce}) together with the GFISDM scheme define a system of differential-algebraic equations that has a closed form solution. For simplicity, we use the notation $F(t,u,\bar{u},p)$ and $\bar{F}(t,u,\bar{u},p)$ to denote the right hand sides of (\ref{fdnse}) and (\ref{daem}) respectively, and write the governing equations for all velocities and pressures in a compact form as

\begin{equation} \label{system}
u' = F(t,u,\bar{u},p), \quad \bar{u}' = \bar{F}(t,u,\bar{u},p), \quad D\bar{u} = r(t).
\end{equation}

\noindent Note that $D\bar{G}$ is non-singular and its inverse is bounded for a properly implemented finite volume method. Consequently, the system of equations given by (\ref{system}) incorporate with appropriate initial conditions for velocities and pressures clearly defines an index 2 differential-algebraic problem. In practice, the analytical solution of this index 2 problem on a given time domain is not available. Therefore, a numerical method is required for the time-marching of this system, and the temporal accuracy of the numerical method can be examined by applying convergence theories developed for index 2 problems. It is noted that the uniqueness and convergence of the numerical solution of (\ref{system}) usually requires the initial conditions for $u$, $\bar{u}$ and $p$ satisfying the equities

\begin{equation} \label{NSconsistency}
D\bar{u}=r(t),\quad   D\bar{F}(t,u,\bar{u},p) = r'(t).
\end{equation}

\noindent If the consistency constraint (\ref{NSconsistency}) is not satisfied, it is suggested to discard the original pressure field and compute a new one by solving the second relation of (\ref{NSconsistency}) instead. Another issue is when the semi-discrete incompressible Navier-Stokes system (\ref{system}) is formulated to solve a steady-state flow problem, the converged solution will satisfy

\begin{equation} \label{convergedSolution}
F(t_0,u,\bar{u},p) = 0, \quad \bar{F}(t_0,u,\bar{u},p) = 0, \quad D\bar{u} =r(t_0),
\end{equation}

\noindent where we replace the variable $t$ with a fixed time instant $t_0$ considering the source terms are constant for steady-state flow problems. Now it is easy to tell the converged solution obtained by solving (\ref{convergedSolution}) is time-step size-independent if the system (\ref{system}) itself or more specifically the $\eta$ functions in a GFISDM scheme does not depend on time-step size. From this point of view, the proposed momentum interpolation framework actually provides an alternative approach to examine if a momentum interpolation method can attain time-step size-independent converged solution for steady-state flows. To carry out the examination, we first seek for specific $\alpha$ and $\beta$, and interpolation schemes for $\eta$, with which the GFISDM scheme discretized temporally will reproduce the scheme of the momentum interpolation method that needs to be examined. Then, by examining the time-step size-dependency of the $\eta$ function, we can easily determine if the converged solutions are unconditionally time-step size-dependent or not.

As mentioned earlier, the scheme (\ref{Yu}) following from the work of \cite{yu2002} can be reproduced if we consider a GFISDM scheme with $\alpha = \beta = \mathbbm{1}$ and the $\eta$ function in the form of (\ref{interpolation1}), and we write GFISDM-Yu for this scheme hereafter. Meanwhile, to reproduce the scheme (\ref{Pascau}) based on \cite{pascau2011}, we consider the scheme (denoted by GFISDM-Pascau hereafter) with $\alpha = \beta = \mathbbm{1}$ and $\eta^i$ given by

\begin{equation} \label{IPascau}
\begin{cases}
\eta^i\big(A\big)=1/\sum_{j\in S_i} l_{ij}/A^j, \\
\boldsymbol{\eta}^{\boldsymbol{i}}\big(H\big)=\left(\sum_{j\in S_i} l_{ij}H^{\boldsymbol{j}}/A^j\right) / \sum_{j\in S_i} l_{ij}/A^j.
\end{cases}
\end{equation}

\noindent It is easy to tell both GFISDM-Yu and GFISDM-Pascau utilize time-step size-independent $\eta$ functions, we can therefore conclude that the methods of \cite{yu2002} and \cite{pascau2011} deliver time-step size-independent solution for steady-state flow problems. Finally, let's discuss the scheme (\ref{Choi}) following from the work of \cite{choi1999}. Note that Choi's method is still being used in the latest public version (i.e., version 6) of the OpenFOAM\textregistered. To reproduce (\ref{Choi}), we consider the GFISDM scheme with $\alpha = \beta = \mathbbm{1}$ and $\eta^i$ in the form of:

\begin{equation} \label{IChoi}
\begin{cases}
\eta^i\big(A\big) = 1/h-1/\sum_{j\in S_i}l_{ij}/\big(1/h-A^j\big),\\
\boldsymbol{\eta}^{\boldsymbol{i}}\big(H\big) = \sum_{j\in S_i}l_{ij}H^{\boldsymbol{j}}\big(1/h-A^j\big)/\sum_{j\in S_i}l_{ij}\big(1/h-A^j\big).
\end{cases}
\end{equation}

\noindent Apparently, the $\bar{A}$ and $\bar{H}$ fields computed by (\ref{IChoi}) are functions of time-step size $h$. Hence, GFISDM-Choi as well as the semi-discrete incompressible Navier-Stokes system based on it depends on the time-step size $h$. Therefore, we get Choi's method is not unconditionally time-step size-dependent for steady state flow problems. This finding is nothing new and were addressed in detail in \cite{yu2002} and \cite{pascau2011}. However, the effect of Choi's method on the convergence of temporal solutions for unsteady flow problems was not involved in these researches. 

Examining (\ref{IChoi}) more carefully, we find when the elements of $A$ are not identical, GFISDM-Choi can be regarded as a perturbed system of GFISDM-Yu with the perturbations of $O(h)$, and it converges to GFISDM-Yu as $h$ approaches to zero. If we consider two semi-discrete incompressible Navier-Stokes system (\ref{system}) based on GFISDM-Yu and GFISDM-Choi respectively, and employ a higher-order time-integrator for the temporal solutions of the two systems. While the time-integrator attains its designed order of convergence for the system with GFISDM-Yu, it only delivers first-order results for the system with GFISDM-Choi since this system contains terms of $O(h)$ initially. In contrast, if the elements of $A$ are identical, GFISDM-Choi is equivalent to GFISDM-Yu, and consequently the convergence results of the time-integrator will not be damaged. Also note that the elements of $A$ are not identical in general, unless the incompressible Navier-Stokes equations are formulated on a mesh system with equal grid spacing and periodic boundaries and the central differencing scheme is employed for spatial discretization of convection terms. In practice, such a numerical setup is usually adopted in the simulation of the two-dimensional Taylor-Green Vortex, a widely-used test problem used for the examination of temporal order of accuracy of a numerical integration method.

\subsection{Spatial convergence of the momentum interpolation} \label{section2.4}

Now, let's discuss the convergence of the errors of GFISDM schemes and how they affect the order of convergence of the finite volume discretization. It is noted that the error of a GFISDM scheme specifically refers to the error in approximating the $\{\bar{K}+\bar{N}\}\bar{u}+\bar{b}$ term of (\ref{daem00}). Prior to the analysis, we assume that the finite volume representation of the semi-discrete momentum equation is convergent of order 2. That is, by reducing the mesh grid spacing with the same proportion in the computational domain, the evaluations (\ref{fdnse}) and (\ref{daem00}) for the time-derivatives of velocities at cell centroids and face centroids (i.e., $u'$ and $\bar{u}'$) will converge with a second-order rate once the grid spacing becomes sufficiently small. The pressure obtained by solving the third relation of (\ref{system}) will also be convergent of order two based on the boundedness of the inverse of $D\bar{G}$. In reality, the $\{\bar{K}+\bar{N}\}\bar{u}+\bar{b}$ term in (\ref{daem00}) cannot be given explicitly, hence we evaluate $\bar{u}'$ with a GFISDM scheme instead. As a result, additional error, i.e., the error of the GFISDM scheme, is introduced to semi-discrete momentum equation of cell-face velocities. Thus, the second-order convergence of $u'$, $\bar{u}'$ and $p$ for the semi-discrete incompressible Navier-Stokes system based on a GFISDM scheme can be maintained if the error of the GFISDM scheme is convergent of order two at least. For simplicity, we mainly consider the GFISDM schemes on a Cartesian or a curvilinear structured grid system with the face interpolation functions $\eta$ following from (\ref{interpolation1}) in the following discussion.

To examine the accuracy and convergence of the GFISDM schemes without being distracted by the complexity of the mathematics, we first consider a one-dimensional problem stated as follows: given two continuous and sufficiently differentiable functions $\phi(\zeta)$, $\lambda(\zeta):\mathbb{R}\rightarrow\mathbb{R}$ defined on a curvilinear axis $\zeta$ and their values at a set of $k$ discrete points on the $\zeta$-axis with the increasing coordinates $\zeta_1$, $\zeta_2$, $\ldots$, $\zeta_k$ where $k \geq 2$. We further assume that the distance between any two adjacent points is proportional to $\varDelta \zeta$. The task of this problem is to approximate the value of $\lambda(\zeta)\phi(\zeta)$ at a point namely $e$ with the coordinate $\zeta_e \in [\zeta_1,\zeta_k]$. Apparently, $\lambda(\zeta_e)\phi(\zeta_e)$ can be approximated by a $k$-th order Lagrange interpolation formula in the form of:

\begin{equation} \label{method1}
\lambda_e \phi_e = \sum_{j=1}^k \lambda(\zeta_j)\phi(\zeta_j) l_j(\zeta_e),
\end{equation}

\noindent where $l_j(\zeta)$ is the Lagrange basis polynomials. If we assume the value of $\phi(\zeta_e)$ is given as well, we can approximate $\lambda(\zeta_e)\phi(\zeta_e)$ alternately using the expression

\begin{equation} \label{method2}
\lambda_e \phi_e = \phi(\zeta_e)\sum_{j=1}^k \lambda(\zeta_j)l_j(\zeta_e).
\end{equation}

\noindent Both approximations satisfy $|\lambda(\zeta_e)\phi(\zeta_e)-\lambda_e \phi_e| = O(\varDelta \zeta^k)$. However, additional constraints are required for them to be convergent of order $k$ (i.e., the approximation $\lambda_e \phi_e$ converges to $\lambda(\zeta_e)\phi(\zeta_e)$ with a $k$-th order rate as $\varDelta \zeta$ approaches to zero). More specifically, the $k$-th order convergence of the approximation (\ref{method1}) relies on the boundedness of the $k$-th order derivatives of $\lambda(\zeta)\phi(\zeta)$ for sufficiently small $\varDelta \zeta$. Meanwhile, for (\ref{method2}) to be convergent of order $k$, the $k$-th order derivatives of $\lambda(\zeta)$ should be bounded for sufficiently small $\varDelta \zeta$. Now, we have finished the discussion of this simplified problem and let's discuss what can be learnt from it. 

To start with, we would like to show that the interpolations involved in the GFISDM schemes on a Cartesian or a curvilinear structured grid system are identical to the simplified problem discussed above. For those two types of grid systems, the interpolation of a cell-centered variable field to a particular cell-face (let's say face $i$) employs the values of this variable at the adjacent downstream and upstream cells. Apparently, the centroids of the cells involved in the interpolation can be used to establish a Cartesian or a curvilinear axis $\zeta$ (see Figure \ref{pseudoCell}). Now, consider a pseudo control volume cell with its centroid moving along the $\zeta$ axis within the interval $X_i = [\zeta_{FD}, \zeta_{FU}]$ where $\zeta_{FD}$ and $\zeta_{FU}$ are the coordinates of the centroids of the farthest downstream cell (denoted by $FD$) and upstream cell (denoted by $FU$) on the $\zeta$ axis (again see Figure \ref{pseudoCell}). We also let the boundaries of the pseudo cell vary smoothly and accordingly such that they coincide with the boundaries of the real cells when the centroid of the pseudo cell is located at the centroids of the corresponding real cells. If we discretize the incompressible Navier-Stokes equation on the pseudo cell with an finite volume method, we can have the discretized diffusion and convection terms of this cell expressed as

\begin{figure}
\centering
\includegraphics[width=0.7\linewidth]{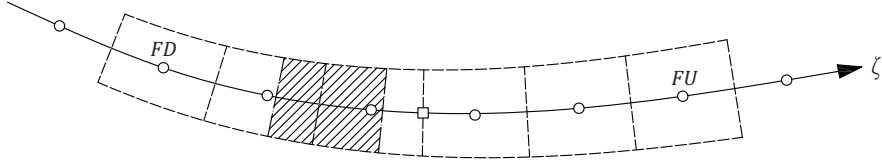} 
\caption{Illustration of the pseudo finite volume cell moving along the $\zeta$ axis in which the patched region denotes the pseudo cell, the dash lines denote the mesh grid lines and the symbols $\bigcirc$ and $\Box$ denote the centroids of the cells and cell-face are respectively}\label{pseudoCell}
\end{figure}

\begin{equation} \label{continous}
\big(\tilde{A}_{i,K}\left(\zeta\right)+\tilde{A}_{i,N}\left(\zeta\right)\big)\tilde{u}(\zeta)+\tilde{H}_{i,K}\left(\zeta\right)+\tilde{H}_{i,N}\left(\zeta\right),
\end{equation}

\noindent where $\tilde{A}_{i,K}(\zeta)$, $\tilde{A}_{i,N}(\zeta):X_i\rightarrow \mathbb{R}$ are the functions of the disretized coefficients regarding the velocity of the pseudo cell resulting from the diffusion and convection terms, respectively. $\tilde{H}_{i,K}(\zeta)$, $\tilde{H}_{i,N}(\zeta):X_i\rightarrow \mathbb{R}^3$ denotes the function collecting the contribution of the velocities of neighbouring cells or boundaries to the diffusion and convection terms, respectively. $\tilde{u}(\zeta):X_i\rightarrow \mathbb{R}^3$ represents the value of the continuous velocity field $\boldsymbol{u}$ along the $\zeta$ axis. By doing so, we actually obtain the continuous counterpart of the discretized diffusion and convection terms $\{\bar{K}+\bar{N}\}\bar{u}+\bar{b}$. We can further rearrange the expression (\ref{continous}) into

\begin{subequations} \label{continousM}
\begin{gather}
\tilde{A}_i(\zeta)\tilde{u}(\zeta)+\tilde{H}_i\left(\zeta\right), \\
\intertext{where $\tilde{A}_i(\zeta):X_i\rightarrow \mathbb{R}$ and $\tilde{H}_i(\zeta):X_i\rightarrow \mathbb{R}^3$ are functions defined as}
\tilde{A}_i(\zeta) \coloneqq \alpha^i\tilde{A}_{i,K}\left(\zeta\right)+\beta^i\tilde{A}_{i,N}\left(\zeta\right), \quad \tilde{H}_i\left(\zeta\right) \coloneqq (1-\alpha^i)\tilde{A}_{i,K}\left(\zeta\right)\tilde{u}\left(\zeta\right)+(1-\beta^i)\tilde{A}_{i,N}\left(\zeta\right)\tilde{u}\left(\zeta\right)+\tilde{H}_{i,K}\left(\zeta\right)+\tilde{H}_{i,N}\left(\zeta\right).
\end{gather}
\end{subequations}

\noindent Apparently, the $\tilde{A}_i$ and $\tilde{H}_i$ functions can be regarded as the continuous counterpart of the discrete $A_i$ and $H_i$ fields of (\ref{daem}). Note that for each cell-face, the corresponding $\tilde{A}_{i,K}$, $\tilde{A}_{i,N}$, $\tilde{H}_{i,K}$, $\tilde{H}_{i,N}$, $\tilde{A}_i$, $\tilde{H}_i$ functions and their domain $X_i$ can be constructed in the same manner. In the following analysis, we also assume that the term (\ref{continous}) or its equivalent form (\ref{continousM}) is bounded and sufficiently differential on $X_i$ for $i=1(1)\bar{m}$. It can be shown that this assumption holds if the grid lines of the mesh are smooth enough. With the above notation and assumptions ready, we can now analyze the accuracy and convergence of the GFISDM schemes as the way we do for the simplified problem. 

Recall that the essential problems of the proposed momentum interpolation framework is the approximation of the face-centered $\{\bar{K}+\bar{N}\}\bar{u}+\bar{b}$ field which is equivalent to a vector collecting the values of (\ref{continous}) at the centroid of face $i$ for $i=1(1)\bar{m}$. In GFISDM schemes, we use the coefficients $\alpha^i$ and $\beta^i$ to divide the $i$-th element of the $\{\bar{K}+\bar{N}\}\bar{u}+\bar{b}$ field into the components $\bar{A}^i\bar{u}^{\boldsymbol{i}}$ and $\bar{H}^{\boldsymbol{i}}$ and then approximate them separately. The $\bar{A}^i\bar{u}^{\boldsymbol{i}}$ term (equivalent to the value of $\tilde{A}_i\tilde{u}$ at the centroid of face $i$) is approximated with $\eta^i(A_i)\bar{u}^{\boldsymbol{i}}$ which corresponds to the scheme of (\ref{method2}). Meanwhile, the $\bar{H}^{\boldsymbol{i}}$ term (equivalent to the value of $\tilde{H}_i$ at the centroid of face $i$) is approximated with $\eta^{\boldsymbol{i}}(H_i)$ which corresponds to the scheme of (\ref{method1}). Both approximations have the same order of accuracy if they utilize the same interpolation scheme for $\eta^i$ and if the $\tilde{A}_i$ and $\tilde{H}_i$ functions are continuous on $X_i$. Therefore, we conclude that a GFISDM scheme with a $k$-th order polynomial-based interpolation scheme for $\eta$ has the $k$-th order of local accuracy if the $\tilde{A}_i$ and $\tilde{H}_i$ functions are continuous on $X_i$ for $i=1(1)\bar{m}$.

Another lesson we learn from the simplified problem is that a $k$-th order local accuracy does not necessaries lead to a $k$-th order convergence. By extending the aforementioned requirements on the orders of convergence to the case of the GFISDM schemes, we conclude that the error in approximating the $\{\bar{K}+\bar{N}\}\bar{u}+\bar{b}$ field with a $k$-th order interpolation scheme will be convergent of order $k$, if and only if the $\tilde{A}_i$ and $\tilde{H}_i$ functions (for $i=1(1)\bar{m}$) and their $k$-th order spatial derivatives are bounded and sufficiently differentiable on their domain as the grid spacing approaches to zero. If those requirements are not satisfied, the error of the GFISDM schemes will be convergent of order less than $k$ which may in return damage the convergence results of the finite volume discretization. Recall we have assumed the $\tilde{A}_i\tilde{u}+\tilde{H}_i$ term is bounded and sufficiently differentiable on $X_i$ for all $i$. Thus, if the $\tilde{A}_i$ function is discontinuous or non-differentiable or unbounded on its domain, neither will be the $\tilde{H}_i$ function. Therefore, the accuracy and convergence of the GFISDM schemes largely depends on the smoothness and boundedness of the $\tilde{A}_i$ function. 

As discussed in Section \ref{section2.2}, isolating the main diagonals of the coefficient matrices $K$ and $N$ during the momentum interpolation is a procedure that has been utilized in almost all of the available momentum interpolation methods available. If we also adopt this strategy in a GFISDM scheme by setting $\alpha = \beta = \mathbbm{1}$, we end up with the scheme of GFISDM-Yu in which $A_i =A$ and $H_i = H$. Meanwhile, a GFISDM scheme with $\alpha$ and $\beta$ equalling to $0$ (write GFISDM-Z for this scheme hereafter) leads to $A_i = 0$ and $H_i = \{K+N\}u+b$. Note that the elements of the $\text{diag}(K)$ term are negative regardless of the numerical schemes used for the discretization of diffusion terms. If we further assume the discretization of convection terms employs a deferred correction approach \citep{khosla1974,moukalled2016}, the elements of the $\text{diag}(N)$ term will be non-positive. As a result, the elements of the $A_i$ term in GFISDM-Yu will have larger absolute values than the rest of GFISDM schemes. Potential benefits of doing this may include the faster convergence rate for the iterative solution of the discretized Navier-Stokes system, higher spatial accuracy and better numerical stability. However, this procedure can be a potential threat to the accuracy and convergence of the momentum interpolation. For example, let us consider a two-dimensional Taylor-Green vortex on a uniform non-staggered grid with a Dirichlet boundary condition for velocities and a deferred correction formulation of the FROMM scheme for the discretization of convection terms. We demonstrate the $\{K+N\}u+b$ and $A$ fields resulting from the finite volume discretization in Figure \ref{figureAU}. It is easy to see the $\{K+N\}u+b$ field is sufficiently smooth over the entire computational domain, while the $A$ field is apparently not. It can be shown that GFISDM-Z is able to attain the second order convergence in this example, while GFISDM-Z is not. To gain a deeper understanding of similar issues, we summarize two types of situations in which GFISDM schemes will suffer from the loss of accuracy and the order reduction.

\begin{figure}
\centering
    \begin{subfigure}[b]{0.45\linewidth}
        \includegraphics[width=\linewidth]{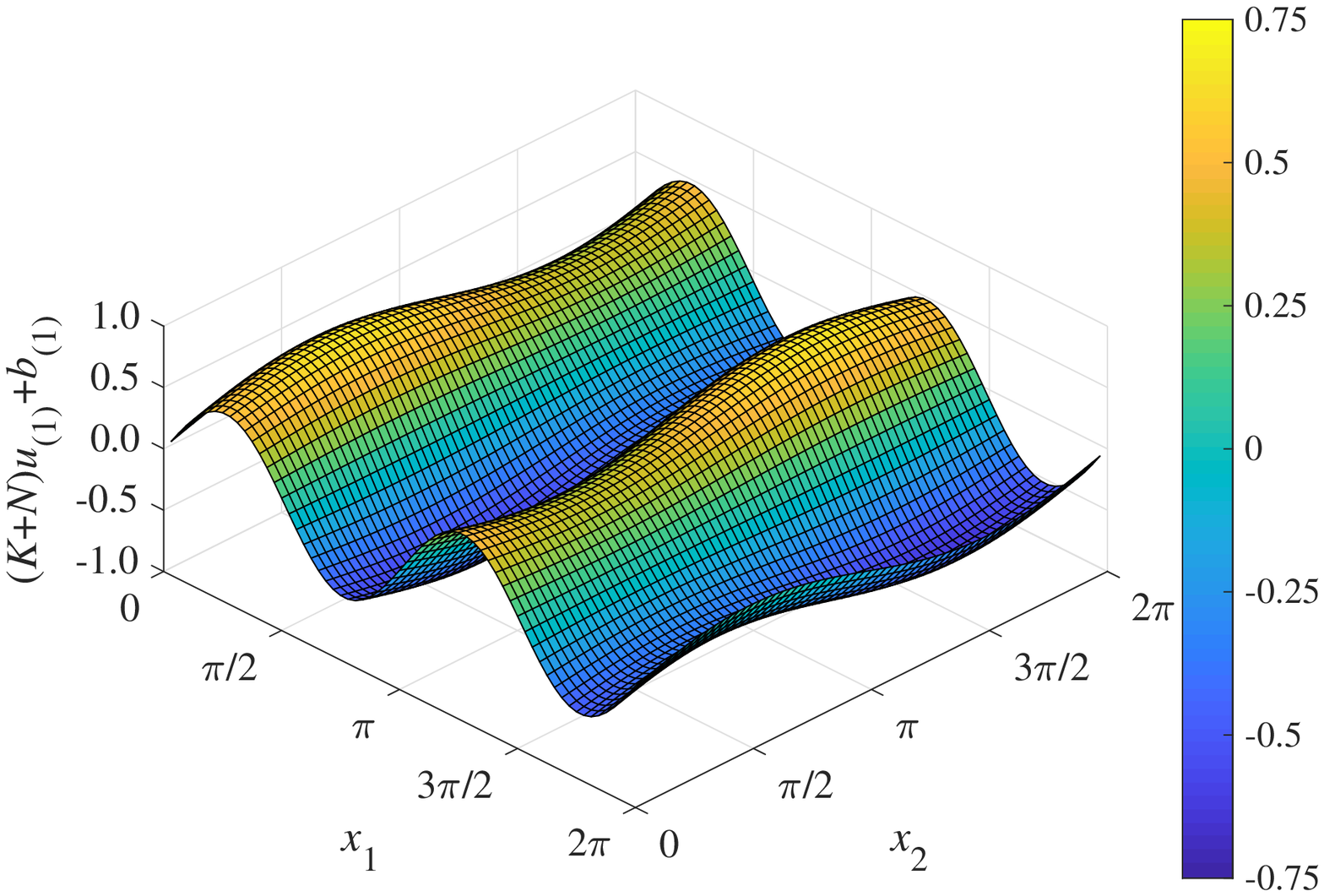}
        \caption{$(K+N)u_{(1)}+b_{(1)}$}
    \end{subfigure}
    \begin{subfigure}[b]{0.45\linewidth}
        \includegraphics[width=\linewidth]{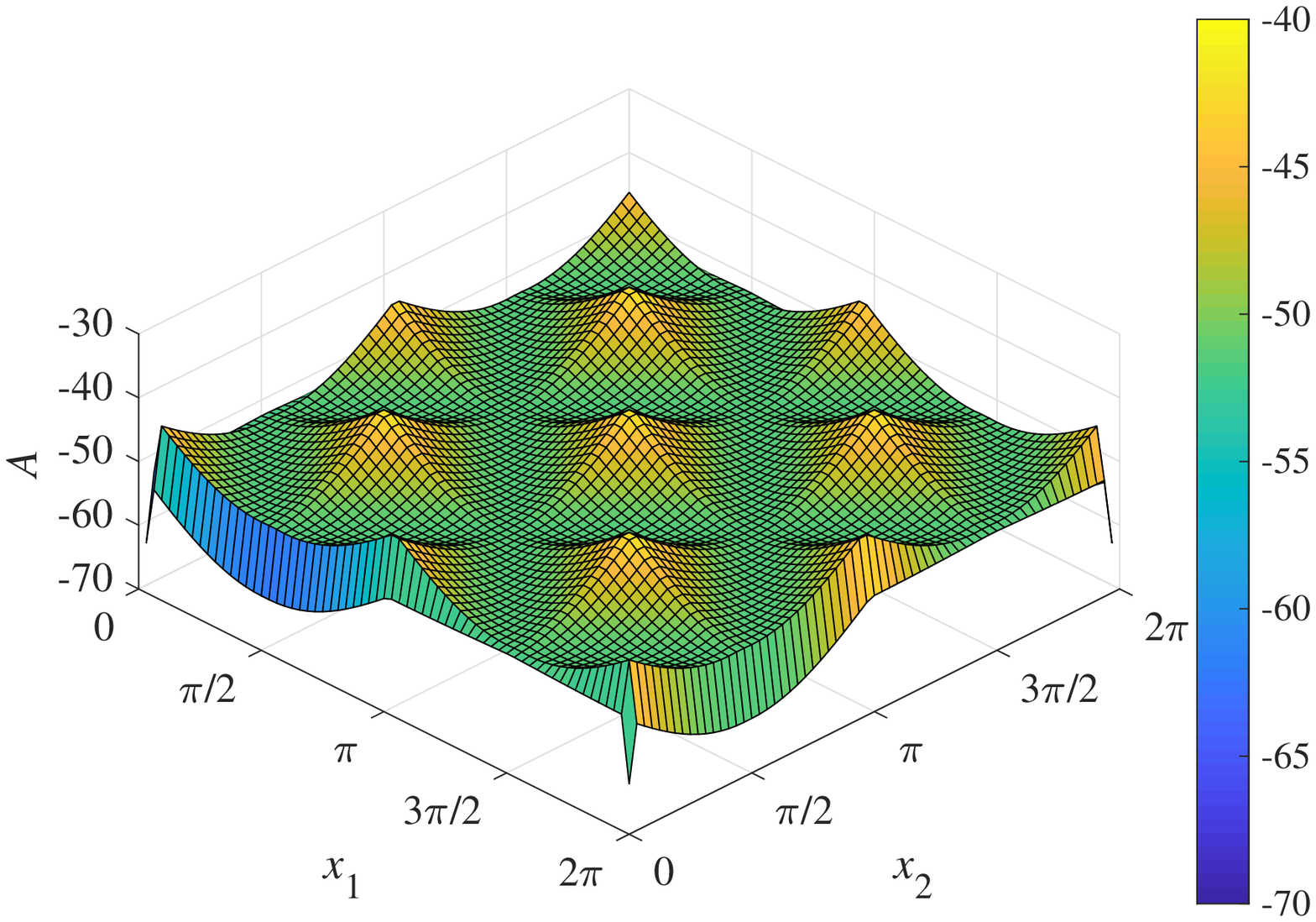}
        \caption{$A=\text{diag}(K+N)$}
     \end{subfigure}
      \caption{The cell-centered $(K+N)u_{(1)}+b_{(1)}$ and $A$ fields of a two dimensional Taylor-Green Vortex on a uniform grid with $64\times64$ control volumes, a Dirichlet boundary condition for velocities and a deferred correction formulation of the FROMM scheme for the discretization of convection terms} \label{figureAU}
\end{figure}

The first situation is the $\tilde{A}_i$ function is discontinuous or non-differentiable on its domain $X_i$. When it happens, the $\eta^i(A_i)$ and $\boldsymbol{\eta}^{\boldsymbol{i}}(H_i)$ terms cannot deliver accurate approximations for the $\bar{A}^i$ and $\bar{H}^{\boldsymbol{i}}$ terms. Consequently, the accuracy and convergence of the GFISDM schemes will be damaged regardless of how accurate the interpolation scheme used for $\eta$ is. Note that, unless both $\alpha^i$ and $\beta^i$ equal to zero (which leads to $\tilde{A}_i=0$), the smoothness of the $\tilde{A}_i$ function depend on that property of its two components i.e., the $\tilde{A}_{i,K}$ and $\tilde{A}_{i,N}$ functions. When one of the nodes required for the evaluations of $\eta^i(A_i)$ and $\boldsymbol{\eta}^{\boldsymbol{i}}(H_i)$ belongs to a boundary cell (i.e., the cell with at least one of its cell-face coincide with the boundary of the computational domain) and the velocity field employs a non-periodic type of boundary condition (e.g., the Dirichlet boundary condition or the Von Neumann boundary condition) at this boundary cell, both $\tilde{A}_{i,K}$ and $\tilde{A}_{i,N}$ are very likely to be discontinuous on $X_i$. Moreover, if an upwind-type of scheme or a deferred correction approach is employed for the discretization of convection terms, the corresponding $\tilde{A}_{i,N}$ function depends on the sign of the $\zeta$-axis component of $\tilde{u}$, and it will be non-differentiable if the $\zeta$-axis component of $\tilde{u}$ change the sign on $X_i$.

The second situation is that the spatial derivatives of the $\tilde{A}_i$ or $\tilde{H}_i$ function are unbounded while the order of the interpolation scheme used for $\eta$ is not high enough. Recall we assumed the expression $\tilde{A}_i\tilde{u}+\tilde{H}_i$, as an integral term, is bounded on $X_i$. However, its two components, i.e., the $\tilde{A}_i\tilde{u}$ and $\tilde{H}_i$ terms, may not be bounded as the grid spacing approaches to zero, since both the $\tilde{A}_{i,K}$ and $\tilde{H}_{i,N}$ functions increase with the grid spacing decreasing. Furthermore, if we, without loss of generosity, assume that the grid spacing is proportional to a particular value, let's say $\varDelta \zeta$. It can be shown that, in this case, $\tilde{A}_{i,K}$ and $\tilde{A}_{i,N}$ are functions of $\varDelta \zeta^{-2}$ and $\varDelta \zeta^{-1}$, respectively. Consequently, the convergence orders of the $\eta^i(A_i)$ and $\boldsymbol{\eta}^{\boldsymbol{i}}(H_i)$ terms are actually lower than the order of accuracy of the interpolation scheme used for $\eta$ unless both $\alpha^i$ and $\beta^i$ equals to zero. More specifically, the $\eta^i(A_i)$ and $\boldsymbol{\eta}^{\boldsymbol{i}}(H_i)$ terms can only show $(k-2)$-th order of convergence if $\alpha^i>0$ (where $k$ is the order of accuracy of the interpolation scheme for $\eta$). Meanwhile, when $\alpha^i=0$ and $\beta^i>0$, the $\eta^i(A_i)$ and $\boldsymbol{\eta}^{\boldsymbol{i}}(H_i)$ terms can only be convergent of order $(k-1)$.

The above discussion indicates that GFISDM-Z turns out to be the easiest and safest choice to attain the second-order convergence in space. It approximate the face-centered $\{\bar{K}+\bar{N}\}\bar{u}+\bar{b}$ field through the direct face interpolation of the cell-centered $\{K+N\}u+b$ field, and neither of the aforementioned situations occurs in this scheme as along as the term (\ref{continous}) is differentiable on $X_i$ for all $i$. Consequently, GFISDM-Z can deliver the second order convergence if the finite volume discretization of the momentum equation itself is convergent of order two and an interpolation scheme with order of accuracy no less than two is utilized in it.

In comparison, it is much more difficult to attain the second order convergence for the GFISDM schemes with $\alpha$ or $\beta$ not being $0$. For this type of schemes to achieve this goal, the first priority is to compensate order reduction caused by the unboundedness of $\tilde{A}_i$ and $\tilde{H}_i$ using higher order interpolation schemes. If $\alpha \neq 0$, the most straightforward approach is to employ a fourth-order interpolation scheme universally for the face interpolations of the $A_i$ and $H_i$ fields. If $\alpha = 0$ while $\beta \neq 0$, a third-order scheme is sufficient. Instead of using the same interpolation scheme universally, we can also specify the interpolation schemes locally according to the requirement on their orders of accuracy so that $\bar{A}^i$ and $\bar{H}^{\boldsymbol{i}}$ can be convergent of order two. In this case, the coefficients $l_{ij}$ on the each row of $L$ are determined by different interpolation schemes accordingly. Note that in the GFISDM scheme given by (\ref{daem}), we treat either the $A_i$ or $H_i$ field as an integral part during the interpolation. Alternatively, we can divide both the $A_i$ and $H_i$ fields into two parts, i.e., the parts from diffusion and convection terms, respectively. Then, we conduct the face interpolation of those parts using the interpolation schemes with the minimum orders of accuracy required, and add up the results afterwards. This approach corresponds to a modified GFISDM scheme with (\ref{daem}) replaced by

\begin{subequations} \label{modifieddaem}
\begin{gather}
\bar{A}^i = \eta^i(A_{i,K})+\eta^i(A_{i,N}),\quad \bar{H}^{\boldsymbol{i}}= \boldsymbol{\eta}^{\boldsymbol{i}}(H_{i,K})+\boldsymbol{\eta}^{\boldsymbol{i}}(H_{i,N}), \\
\intertext{where}
A_{i,K} \coloneqq \alpha^i\text{diag}(K), \quad A_{i,N} \coloneqq \beta^i\text{diag}(N), \quad H_{i,K} \coloneqq \{K-\text{diag}(A_{i,K})\}u+b_K, \quad H_{i,N} \coloneqq \{N-\text{diag}(A_{i,N})\}u+b_N.
\end{gather}
\end{subequations}

\noindent $b_K$ and $b_N\in \mathbb{R}^{m}$ are the components of $b$ arising from the discretized diffusion and convection fluxes at the boundary, respectively. Each $\eta^i$ in the above equation is then determined by a interpolation scheme with the order of accuracy no less than the requirement given in Table \ref{orderofinterpolation}. It is also worth mentioning that implementing a higher-order interpolation scheme is quite challenging on a finite volume mesh especially for unstructured grids. Even for structured grids, it is still difficult to find enough data points to perform the interpolation at the location adjacent to the boundary. If this type of situation happens during the evaluation of $\bar{A}^i$ and $\bar{H}^{\boldsymbol{i}}$, we can simply set $\alpha^i$ or $\beta^i$ or both to zero, so that a linear interpolation scheme (which only use two data points) is sufficient. 

The next priority is to make sure the $A_i$ function is sufficiently differentiable on its domain for $i=1(1)\bar{m}$. This goal can be achieved easily by setting $\alpha^i$ to zero if $\tilde{A}_{i,K}$ is discontinuous or non-differentiable on $X_i$; and setting $\beta^i$ to zero if $\tilde{A}_{i,N}$ is discontinuous or non-differentiable on $X_i$. Note that the $\tilde{A}_{i,N}$ function is time-dependent for unsteady flow problems. In practice, it is very likely that the $\tilde{A}_{i,N}$ function is sufficiently differentiable within a particular time interval e.g. $[t_0,t_k)$, while it is not for the time interval $(t_k,t_n]$ (where $0<k<n$). In this case, we can not just set $\alpha^i(t)=1$ and $0$ for $t\in[t_0,t_k)$ and $(t_k,t_n]$, respectively, since $\alpha^i$ will be discontinuous and non-differentiable at the time instant $t_k$ by doing so. Also note that if $\alpha$ and $\beta$ are time-varying, they should be sufficiently differentiable in time for any applied time-integrator to attain its designed order of accuracy. Therefore, if we insist on utilizing time-varying $\alpha^i$ coefficient, we should let it varies smoothly between 1 and 0. Apparently, this approach is only applicable if we know or can somehow estimate the time instants at which $\tilde{A}_{i,N}$ becomes non-differentiable. A more realistic approach is to use a constant value for $\alpha^i$ and set it to zero if $\tilde{A}_{i,N}$ is likely to be non-differentiable in the whole solution time domain. Based on the above requirements, we develop a GFISDM scheme which employs (\ref{modifieddaem}) and constant $\alpha$, $\beta$ coefficients such that the properties of GFISDM-Yu can be preserved as much as possible without suffering from the order reduction.

\begin{table}
\centering
\begin{tabular}{*7c}
\toprule
Cases & $A_i$ & $A_{i,K}$ & $A_{i,N}$ & $H_i$ & $H_{i,K}$ & $H_{i,N}$ \\
\midrule
$\alpha^i>0,\ \beta^i>0$ & 4 & 4 & 3 & 4 & 4 & 3 \\
$\alpha^i>0,\ \beta^i=0$ & 4 & 4 & - & 4 & 4 & 2 \\
$\alpha^i=0,\ \beta^i>0$ & 3 & - & 3 & 3 & 2 & 3 \\
$\alpha^i=0,\ \beta^i=0$ & 2 & 2 & 2 & 2 & 2 & 2 \\
\bottomrule
\end{tabular}
\caption{Minimum requirements on the orders of accuracy of the interpolation schemes for different terms in GFISDM to be convergent of order 2}
\label{orderofinterpolation}
\end{table}

Now, we have finished the discussion on the convergence results of the GFISDM schemes on structured Cartesian or curvilinear gird systems with $\eta$ in the form of (\ref{interpolation1}). It is reasonable to assume the discontinuity, non-smoothness or unboundedness of the cell-centered field that interpolated will also damage the accuracy and convergence of the momentum interpolation in other methods available in literature as well. The order of spatial convergence is not all of the game in a finite volume method. It is however easier to summarize and more difficult to obtain than other aspects. The Taylor-Green Vortex we mentioned earlier can be used to examine the convergence of the spatial errors. As a matter of fact, such an examination has always been included in studies which utilize a finite differencing method \citep{kim1985, le1991} or a finite volume method on a staggered grid \citep{sanderse12}. However, the convergence results of the spatial error was seldom presented in the studies of the finite volume method on a collocated grid, and for those that presented, the boundary conditions and the schemes employed for the discretization of convection terms were not reported. Instead, we will present a detailed verification on the accuracy and convergence of the spatial errors for the semi-discrete incompressible Navier-Stokes system based on GFISDM-Z and GFISDM-H later in Section \ref{section5}. Here, we shall continue the discussion on the solution of the semi-discrete incompressible Navier-Stokes system.

\section{Runge-Kutta Methods for Differential-Algebraic Problems} \label{section3}

In this section, we first briefly discuss how the standard direct approach employs an implicit Runge-Kutta scheme for solution of the semi-discrete incompressible Navier-Stokes system, the corresponding convergence results and order conditions. Subsequently, we propose a new way of applying the implicit Runge-Kutta schemes which aims at improving the orders of convergence for pressures and enhancing the computational efficiency The convergence result of the proposed method is analyzed mathematically afterwards. Finally, a discussion on the low-storage implementation of stiff-accurate diagonal implicit Runge-Kutta schemes in the proposed method and the determination of Runge-Kutta coefficients is presented.

\subsection{The direct approach of applying implicit Runge-Kutta schemes}

The solution of the semi-discrete incompressible Naiver Stokes system (\ref{system}) can be analysed by considering an equivalent index 2 DAE problem regarding $y(t)\in \mathbb{R}^{m_y}$ and $z(t)\in \mathbb{R}^{m_z}$ specified as follows:

\begin{equation} \label{I2DAE}
y' =  f(y,z), \quad gy = r(t),
\end{equation}

\noindent where $f:\mathbb{R}^{m_y}\times\mathbb{R}^{m_z}\rightarrow\mathbb{R}^{m_y}$ is a function of $y$ and $z$, and $g \in \mathbb{R}^{m_z \times m_y}$ is matrix of constant coefficients. The integers $m_y$ and $m_z$ denote the dimensions of $y$ and $z$, respectively. In order that ($\ref{I2DAE}$) is an index 2 problem, we assume that $\norm{(gf_z(y,z))^{-1}}$ (where $f_z(y,z)$ is the partial derivative of $f$ with respect to $z$) exists and is bounded in a neighbourhood of the solution. We further assume that $f$ and $r$ are sufficiently differentiable, and the initial values $(y_0,z_0)$ satisfy the consistency condition

\begin{equation} \label{consistency}
gy_0 = r(t_0), \quad gf(y_0,z_0) = r'(t_0).
\end{equation}

\noindent For the approximation of $(y_{n+1},z_{n+1})$ from an approximation $(y_n,z_n)$ at $t_n$ using an $s$-stage implicit Runge-Kutta (IRK) scheme with a non-singular coefficient matrix, the direct approach gives

\begin{subequations} \label{originalRK}
\begin{gather}
y_{n+1} = y_n + \textstyle\sum_{i,j=1}^s b_i \omega_{ij}(Y_{nj}-y_n), \quad z_{n+1} = z_n + \textstyle\sum_{i,j=1}^s b_i \omega_{ij}(Z_{nj}-z_n), \label{originalRKfinalStage} \\  
\intertext{with the Runge-Kutta internal stages for $i=1(1)s$ given by}
Y_{ni}  = y_n + h\textstyle\sum_{j=1}^s a_{ij} f(Y_{nj}, Z_{nj}), \quad gY_{ni}  = r(t_n+c_ih), \label{originalRKinternalStage}
\end{gather} 
\end{subequations}

\noindent where $a_{ij}$, $b_i$ and $c_i$ are coefficients that characterize a Runge-Kutta scheme, and $\omega_{ij}$ denote an element of the inverse the matrix of coefficients $a_{ij}$. Runge-Kutta coefficients are usually displayed in a tableau of the form:

\begin{equation} \label{RKtableau}
\begin{array}{ l | l  l  l  l }		
  c_1 & a_{11} & a_{12} & \ldots & a_{1s} \\
  c_2 & a_{21} & a_{22} & \ldots & a_{2s} \\
  \vdots & \vdots  & \vdots & \ddots & \vdots \\
  c_s & a_{s1} & a_{s2} & \ldots & a_{ss} \\ \cline{1-5}
  & b_1 & b_2 & \ldots & b_s \\
\end{array}
\end{equation}

\noindent  Hereafter for simplicity, we write $t_{ni}$ for $t_n+c_i h$, $A \in \mathbb{R}^{s\times s}$ for the matrix of elements $a_{ij}$, $b \in \mathbb{R}^s$ for the vector of $b_i$ and $c \in \mathbb{R}^s$ for the vector of $c_i$ in this section. In is noted that the symbols $A$ and $b$ in this section are different from those in the rest of the sections.

The implementation of an $s$-stage IRK scheme equipped with a full matrix $A$ to a differential-algebraic system of $m_y$ differential equations and $m_z$ algebraic equations requires the simultaneous solution of $(m_y+m_z)\times s$ implicit equations at every time step. However, for schemes equipped with a lower triangular coefficient matrix, the system (\ref{originalRKinternalStage}) can be readily solved in $s$ successive stages with $(m_y+m_z)$ implicit equations to be solved at each stage. Such a scheme has a significant advantage in computational efficiency compared to the one with a full matrix $A$, and is called the diagonally implicit Runge-Kutta (DIRK) scheme. If, in addition, the elements of the main diagonal of $A$ are identical, we speak of an singly diagonally implicit Runge-Kutta (SDIRK) scheme. 

An important reason we prefer the IRK schemes with invertible coefficient matrices over the ESDIRK schemes or the half-explicit methods for the semi-discrete incompressible Naver-Stokes system considered in this study is because the non-singularity of $A$ suggests that the first relation of (\ref{originalRKinternalStage}) can be replaced by

\begin{equation}  \label{originalRKinternalStageModified}
Y_{ni} = y_n + \textstyle\sum_{\substack{j,k=1 \\ j \neq i}}^{s} a_{ij} \omega_{jk}(Y_{nk} - y_n) + ha_{ii} f(Y_{ni},Z_{ni}).
\end{equation}

\noindent Considering the second term on the right hand side (RHS) of (\ref{originalRKinternalStageModified}) is easy to compute, such a modification avoids the repeat evaluation of $f(y,z)$ at every Runge-Kutta internal stage which can be rather difficult and time-consuming if the expression of $f$ cannot be explicitly given easily or the system of equations is high-dimensional.  When it comes to the semi-discrete incompressible Navier-Stokes equations considered in this study, an implicit Runge-Kutta scheme with (\ref{originalRKinternalStage}) replaced by (\ref{originalRKinternalStageModified}) clearly avoids the interpolations involved the momentum interpolation to the most extent. Furthermore, we only consider the diagonal implicit schemes in this study since the solutions of the non-linear Runge-Kutta internal stage equations given by a fully-implicit scheme is way more difficult.

\subsection{Conditions on orders of convergence for the direct approach}

Before we discuss the order results of the direct approach (\ref{originalRK}) for the solution of (\ref{I2DAE}), we briefly introduce some basic properties of a Runge-Kutta scheme that affect its order of convergence when applied to a differential-algebraic problem first. To start with, we use the symbols $B$ and $C$, in agreement with \cite{butcher1964}, to denote a Runge-Kutta scheme whose coefficients satisfy:

\begin{equation}
\begin{split}
B(\xi): &\textstyle\sum_{i=1}^s b_i c_i^{k-1} = \frac{1}{k}, \quad k = 1(1)\xi, \\
C(\xi): &\textstyle\sum_{i=1}^s a_{ij} c_j^{k-1} = \frac{c_i^k}{k}, \quad i=1(1)s,\ k = 1(1)\xi.
\end{split}
\end{equation}

\noindent Recall that we say a Runge-Kutta scheme is of classical order $p$ if, for a sufficiently smooth problem $y' =f(t,y)$, the estimate $\norm{y(t_n+h)-y_{n+1}} \leq Ch^{p+1}$ holds. It should be addressed that throughout this section the variable $p$ denotes an integer rather than the pressure. If we use the symbol $\rho(p)$ to denote the order conditions for a Runge-Kutta scheme to be of classical order $p$ in addition to the conditions $\rho(r)$ with $r=0(1)p-1$ (where $\rho(0) = \emptyset$), the order conditions for a Runge-Kutta scheme to have classical order up to 3 are given by

\begin{equation} \label{classicalOrder3}
\begin{split}
\rho(1):&\ \textstyle\sum_i^s b_i = 1, \\
\rho(2):&\ \textstyle\sum_i^s b_i c_i = 1/2, \\
\rho(3):&\ \textstyle\sum_i^s b_i c_i^2 = 1/3, \quad \sum_{i,j}^s b_i a_{ij} c_j = 1/6.
\end{split}
\end{equation}

\noindent The stage order of a Runge-Kutta scheme, on the other hand, is the largest value of $q$ such that both $B(q)$ and $C(q)$ are satisfied. As its name implies, the stage order is related to the order of accuracy of the Runge-Kutta internal stage values, and it has a significant impact on the accuracy of the method when applied to differential-algebraic problems.

Another property for an implicit Runge-Kutta scheme with non-singular $A$ which plays a decisive role in the order of convergence is the limit of its stability function at $\infty$ (the definition of the stability function can be found in \cite{wanner1991}:

\begin{equation}\label{Rinfty}
R(\infty) = 1-b^TA^{-1}\mathbbm{1}.
\end{equation}

\noindent In a general non-linear index 2 problem, the condition $|R(\infty)|<1$ is required to ensure the boundedness of the global error of the implicit Runge-Kutta scheme. Note that if the coefficients of Runge-Kutta schemes satisfy for $i=1(1)s$

\begin{equation} \label{stiffaccurate}
b_i = a_{si},
\end{equation}

\noindent we will have $|R(\infty)| = 0$. Such schemes are known as the stiff-accurate methods in literatures. A direct implementation of (\ref{stiffaccurate}) leads to $Y_{ns} = y_{n+1}$ and $Z_{ns} = z_{n+1}$ which turns out be a favourable property for the solution of differential-algebraic problems. Moreover, it can be shown that $z_{n+1}$ does not depend on $z_n$ in still-accurate Runge-Kutta schemes. This statement can be validated by rewriting the second-relation of (\ref{originalRKfinalStage}) as

\begin{equation} \label{zcomponent}
z_{n+1} = R(\infty) z_n + \textstyle\sum_{i,j=1}^s b_i \omega_{ij}Z_{nj}.
\end{equation}

\noindent Inserting $R(\infty) = 0$ to the above relation and considering the fact that $Z_{ni}$ are independent of $z_n$, the in-dependency of $z_{n+1}$ on $z_n$ is now straightforward. The feature that the algebraic component obtained within the current time-step does not depend on its information at previous time-steps is very attractive especially considering the initial values of $z$ may not necessarily satisfy the second relation of the consistency constraint (\ref{consistency}). Finally, it is good to know that a method is said to be convergent of order $p$ if the error, i.e., the difference between the exact and the numerical solution, is bounded by $Const \cdot  h^p$ uniformly on bounded intervals for sufficiently small step sizes $h$. 

Similar to the way we define the order conditions of classical order, we use the symbol $\rho_y(p)$ to denote the order conditions for the $y$-component to show order of convergence $r$ in addition to the conditions $\rho(r)$ with $r=0(1)p$ and $\rho_y(r)$ with $r=0(1)p-1$ (where $\rho_y(0)=\emptyset$), and similarly for $z$-component. Based on the discussions on index 2 differential-algebraic problems presented in \cite{wanner1991}, the conditions for $y$-component in (\ref{originalRK}) to be convergent of order 3, in addition to (\ref{classicalOrder3}) and $|R(\infty)| <1$,  are

\begin{equation} \label{conditionY}
\begin{split}
\rho_y(1)&:\ \emptyset, \\
\rho_y(2)&:\ \textstyle\sum_{i,j}^s b_i \omega_{ij} c_j^2 = 1, \\
\rho_y(3)&:\ \textstyle\sum_{i,j}^s b_i \omega_{ij} c_j^3 = 1.
\end{split}
\end{equation}

\noindent In the meantime, the condition for the $z$-component to show a second-order convergence is

\begin{equation} \label{conditionZ}
\begin{split}
\rho_z(1)&:\ \emptyset, \\
\rho_z(2)&:\ \textstyle\sum_{i,j,k}^s b_i \omega_{ij} \omega_{jk} c_k^2 = 2.
\end{split}
\end{equation}

\noindent It can be easily shown that $\rho_y(2)$ and $\rho_y(3)$ reduce to $c_s^2 =1$ and $c_s^3 =1$ respectively. These two conditions hold for all stiff-accurate implicit Runge-Kutta schemes and consequently for all stiff accurate DIRK schemes as well. However, it is impossible to construct a DIRK scheme with non-singular $A$ satisfying $\rho_z(2)$. As a result, the $z$-component of a DIRK method given by (\ref{originalRK}) is first-order accurate only.

\subsection{A new way of applying implicit Runge-Kutta schemes} \label{section3.3}

The algebraic component in (\ref{I2DAE}) corresponds to the pressure in the semi-discrete incompressible Navier-Stokes system. Considering discrete pressure fields of higher-order temporal accuracy are of interest in many CFD applications, the feature that a DIRK scheme implemented with (\ref{originalRK}) delivering first-order temporal-accurate pressures only is very unpleasant. This motivate us to investigate the possibility of improving the order of convergence for the algebraic component in (\ref{I2DAE}) through the modifications regarding the standard direct approach (\ref{originalRK}). In our attempt to do so, we consider an implicit Runge-Kutta method that combines the benefit of (\ref{originalRKinternalStageModified}) in numerical efficiency with a new way of applying algebraic constraints. For the time-marching of (\ref{I2DAE}) from $t_n$ to $t_{n+1}$, the proposed method yields

\begin{subequations}\label{RKformulation}
\begin{gather}
y_{n+1} = y_n + \textstyle\sum_{i,j=1}^s b_i \omega_{ij}(Y_{nj}-y_n), \quad z_{n+1} = z_n + \textstyle\sum_{i,j=1}^s b_i \omega_{ij}(Z_{nj}-z_n),  \label{RKfinalstage} \\ 
\intertext{with the Runge-Kutta internal stages for $i=1(1)s$ given by}
\begin{split}
Y_{ni} &= y_n + \textstyle\sum_{\substack{j,k=1 \\ j \neq i}}^{s} a_{ij} \omega_{jk}(Y_{nk} - y_n) + h a_{ii} f(Y_{ni},Z_{ni}), \\
g Y_{ni} &= r(t_n) + h\textstyle\sum_{j=1}^s a_{ij} \big(r'(t_{ni}) + \theta_{i}\big),
\end{split} \label{RKinternalstage}
\end{gather} 
\end{subequations}

\noindent where $\theta_{i}$ are functions of $t_n$ and $h$, and satisfies

\begin{equation} \label{RKConstraint}
r(t_n+h) = r(t_{n}) + h\textstyle\sum_{i=1}^s b_{i}\big( r'(t_{ni})+\theta_i \big).
\end{equation}

\noindent The major difference between the direct approach and the proposed method, in addition to replacing the first relation of (\ref{originalRKinternalStage}) by (\ref{originalRKinternalStageModified}), relates to the specification of the source term of the continuity equation at the Runge-Kutta internal stage. Instead of taking the explicitly prescribed value $r(t_{ni})$, the source term of the continuity equation now consists of two components: a Runge-Kutta approximation of the $r(t_{ni})$ term from the starting value $r(t_n)$ (i.e., $r(t_n) + h\sum_{j=1}^s a_{ij} r'(t_{ni})$) and a pre-determined perturbation term (i.e., $\theta_{i}$) which ensures the satisfaction of the equality $g y_{n+1} = r(t_{n+1})$ at the new time-step through the constraint (\ref{RKConstraint}). For the convergence of the method (\ref{RKformulation}), we state the following theorem:

\begin{theorem} \label{theorem1}

Consider the problem (\ref{I2DAE}) and suppose that the initial conditions are consistent and that $(gf_z)^{-1}(y,z)$ exists and is bounded in a neighbourhood of the solution. Let $(y_n,z_n)$ be given by the implicit Runge-Kutta method (\ref{RKformulation}) of classical order $p$ and stage order $q$ with $p \geq q+1$, having an invertible coefficient matrix $A=(a_{ij})$ and satisfying $|R(\infty)| < 1$ where $R(\infty)$ is given by (\ref{Rinfty}).  If the errors in solving the first relation of ($\ref{RKinternalstage}$) are of $O(h^{p+1})$, then the global errors for $y$- and $z$-components satisfy

\begin{equation}
y_n-y(t_n) = O(h^{p}), \quad z_n-z(t_n) = O(h^{q+1}), \nonumber
\end{equation}

\noindent for $h\leq h_0$, $t_n = nh \leq C$ and $\theta_{i} = O(h^{p})$. If $a_{si} = b_i$ for $i=1(1)s$ in addition, the global error for the $z$-component can be sharpened with

\begin{equation}
z_n-z(t_n) = O(h^{p}). \nonumber
\end{equation}

\end{theorem}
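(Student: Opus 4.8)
The plan is to exploit two structural consequences of stiff accuracy $a_{si}=b_i$ together with the specific way the proposed method (\ref{RKformulation}) enforces the algebraic constraint. I would first record the stiff-accuracy identities, then derive an exact \emph{discrete hidden constraint} satisfied by the numerical stage values; the sharpened estimate for the $z$-component then drops out of a single linearization of $gf$, using the $O(h^p)$ bound on the $y$-component that has already been established in the first part of the theorem.

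First I would collect the consequences of $a_{si}=b_i$. Since $\sum_{i=1}^s b_i\omega_{ij}=\sum_{i=1}^s a_{si}(A^{-1})_{ij}=(AA^{-1})_{sj}=\delta_{sj}$, the update relations (\ref{RKfinalstage}) collapse to $y_{n+1}=Y_{ns}$ and $z_{n+1}=Z_{ns}$, while $c_s=\sum_{j=1}^s a_{sj}=\sum_{j=1}^s b_j=1$ gives $t_{ns}=t_{n+1}$. Applying $g$ to $y_{n+1}=Y_{ns}$, inserting the second relation of (\ref{RKinternalstage}) with $a_{sj}=b_j$ and comparing with (\ref{RKConstraint}) shows that $gy_{n+1}=r(t_{n+1})$ holds \emph{exactly} at every step; since the initial data are consistent, induction gives $gy_n=r(t_n)$ for all $n$, so the numerical $y$-trajectory lies exactly on the constraint manifold.

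Next I would derive the discrete hidden constraint. Left-multiplying the first relation of (\ref{RKinternalstage}) by $A^{-1}$ (equivalently, starting from the unmodified stage relation $Y_{ni}=y_n+h\sum_{j}a_{ij}f(Y_{nj},Z_{nj})$) gives $hf(Y_{ni},Z_{ni})=\sum_{k}\omega_{ik}(Y_{nk}-y_n)$. Applying $g$, substituting the prescribed value of $gY_{nk}$ from (\ref{RKinternalstage}), using $gy_n=r(t_n)$, and collapsing $\sum_{k}\omega_{ik}a_{kj}=(A^{-1}A)_{ij}=\delta_{ij}$ produces the identity
\begin{equation}
gf(Y_{ni},Z_{ni}) = r'(t_{ni})+\theta_i,\qquad i=1(1)s. \nonumber
\end{equation}
This is the exact discrete analogue of the hidden constraint $gf(y,z)=r'(t)$ obtained by differentiating $gy=r(t)$, and it is the point at which the design of (\ref{RKinternalstage}) pays off: the perturbations $\theta_i$ are precisely the defects in the hidden constraint. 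If the first relation of (\ref{RKinternalstage}) is only solved up to an $O(h^{p+1})$ residual, this residual contributes at most $O(h^p)$ to the right-hand side after the division by $h$, which is harmless below.

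Finally I would specialize to $i=s$. Using $y_{n+1}=Y_{ns}$, $z_{n+1}=Z_{ns}$ and $t_{ns}=t_{n+1}$, the identity reads $gf(y_{n+1},z_{n+1})=r'(t_{n+1})+\theta_s$, whereas the exact solution satisfies $gf(y(t_{n+1}),z(t_{n+1}))=r'(t_{n+1})$. Subtracting and expanding $gf$ about the exact solution by the mean value theorem gives $gf_z\,(z_{n+1}-z(t_{n+1}))+gf_y\,(y_{n+1}-y(t_{n+1}))=\theta_s$ up to terms quadratic in the two errors. Since $y_{n+1}-y(t_{n+1})=O(h^p)$ and $\theta_s=O(h^p)$ by hypothesis, and $(gf_z)^{-1}$ is bounded near the solution, inversion yields $z_{n+1}-z(t_{n+1})=O(h^p)$, which is the claim. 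The main obstacle is precisely this inversion: the expansion contains a term quadratic in $z_{n+1}-z(t_{n+1})$, so the argument is a priori circular unless that difference is already known to be small enough to lie in the neighbourhood where $(gf_z)^{-1}$ is controlled. This is exactly the role of the weaker estimate $z_n-z(t_n)=O(h^{q+1})$ established in the first part of the theorem: because $q\ge 1$ it forces the error to zero, places $(y_{n+1},z_{n+1})$ in the region of validity, and lets the linearized relation be solved uniquely, after which the quadratic remainder is absorbed and the sharpened $O(h^p)$ bound follows.
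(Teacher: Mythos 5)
Your derivation of the discrete hidden constraint $gf(Y_{ni},Z_{ni})=r'(t_{ni})+\theta_i$ is correct and is in fact the same structural observation the paper makes in (\ref{csolution1})--(\ref{equation2}): the proposed method is exactly a standard IRK discretization of the index~1 reformulation $y'=f(y,z)$, $gf(y,z)=r'(t)$, perturbed by the $O(h^p)$ quantities $\theta_i$ and the stage residuals. Your treatment of the \emph{sharpened} estimate is also sound and, if anything, more self-contained than the paper's: specializing the hidden constraint to $i=s$, using $Y_{ns}=y_{n+1}$, $Z_{ns}=z_{n+1}$, $t_{ns}=t_{n+1}$, linearizing $gf$ about the exact solution and absorbing the quadratic remainder via the a priori smallness of $\Delta z$ is essentially the proof of the stiff-accurate index~1 result that the paper simply imports from Theorem~3.1 of \cite{hairer1989}.

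The genuine gap is that you never prove the first claim of the theorem, namely $y_n-y(t_n)=O(h^p)$ and $z_n-z(t_n)=O(h^{q+1})$; you invoke both as ``already established in the first part of the theorem,'' but they \emph{are} the theorem, and they are where almost all of the work lies. These bounds cannot be quoted off the shelf: the method is not the standard direct approach, and even after your reduction to the index~1 form one is left with a Runge--Kutta method whose stage equations carry perturbations $\delta_i,\theta_i=O(h^p)$ at every one of the $n\sim 1/h$ steps. One must show (i) that the perturbed nonlinear stage systems possess locally unique solutions (the paper's Lemma~\ref{lemma1}, via Newton--Kantorovich and the bounded inverse of the stage Jacobian), (ii) a one-step stability estimate quantifying how $O(h^p)$ perturbations in the stages propagate to the step endpoint, with the $z$-error contracted by the factor $\alpha=|R(\infty)|<1$ (Lemma~\ref{lemma2}, proved by a homotopy between the perturbed and unperturbed schemes), and (iii) that the accumulated effect over $nh\leq C$ steps remains $O(h^p)$ for $y$ and is damped geometrically for $z$ (Lemma~\ref{lemma3}, via the recursions $\norm{\varDelta y_{n+1}}\leq(1+Ch)\norm{\varDelta y_n}+Ch(\norm{\delta}+\norm{\theta})$ and $\norm{\varDelta z_{n+1}}\leq\alpha\norm{\varDelta z_n}+C(\norm{\varDelta y_n}+h\norm{\delta}+\norm{\theta})$). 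This accumulation argument is also where the hypothesis $|R(\infty)|<1$ enters, which your proposal never uses. Without this stability analysis the circularity you flag at the end is not actually resolved: the ``weaker estimate'' you lean on to enter the neighbourhood where $(gf_z)^{-1}$ is controlled has no independent justification in your argument.
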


\begin{remark}
Replacing the first relation of (\ref{RKinternalstage}) with that of (\ref{originalRKinternalStage}) doest not change the results of the theorem.
\end{remark}

\begin{proof}

Given that the errors in solving the first relation of (\ref{RKinternalstage}) are $O(h^{p+1})$, we have the numeral solution satisfying for $i=1(1)s$

\begin{equation} \label{numericalSolution}
\begin{split}
Y_{ni} &= y_n + \textstyle\sum_{\substack{j,k=1 \\ j \neq i}}^{s} a_{ij} \omega_{jk}(Y_{nk} - y_n) + ha_{ii}\big( f(Y_{ni},Z_{ni}) + \delta_{i}\big), \\
g Y_{ni} &= r(t_n) + h\textstyle\sum_{j=1}^s a_{ij} \big(r'(t_{ni})+\theta_{i}\big),
\end{split}
\end{equation}

\noindent where both $\delta_i$ and $\theta_i$ are of $O(h^{p})$. If we introduce the notation $\delta \coloneqq (\delta_{1},\ldots,\delta_{s})^T$, $\theta \coloneqq (\theta_{1},\ldots,\theta_{s})^T$,

\begin{equation}
Y_n  \coloneqq \big(Y_{n1},\ldots,Y_{ns}\big)^T, \quad Y'_n  \coloneqq \big(f(Y_{n1},Z_{n1}),\ldots,f(Y_{ns}, Z_{ns})\big)^T, \quad r'_n  \coloneqq \big(r'(t_{n1}),\ldots,r'(t_{ns})\big)^T, \nonumber
\end{equation}

\noindent and $\hat{A} \coloneqq \text{diag}(\text{diag}(A))$, the relations in (\ref{numericalSolution}) can be replaced by

\begin{equation} \label{numericalSolution2}
\begin{split}
Y_n &= \mathbbm{1} \otimes y_n + \big((A-\hat{A})A^{-1} \otimes I\big)\big(Y_n - \mathbbm{1} \otimes y_n\big) + h\big(\hat{A} \otimes I\big)\big(Y'_n +\delta\big), \\
\big\{g\big\}Y_n &= \mathbbm{1} \otimes r(t_n) + h\big(A\otimes I\big)(r'_n + \theta),
\end{split}
\end{equation}

\noindent where $\otimes$ denotes the Kronecker product. By deduction, the first relation of (\ref{numericalSolution2}) can further be rewritten as

\begin{equation} \label{csolution1}
Y_n = \mathbbm{1} \otimes y_n + h\big(A \otimes I\big)\big(Y'_n +\delta\big).
\end{equation}

\noindent Multiplying $\{g\}$ to the both sides of the above equation and using the equality $gy_n = r(t_n)$ and the second relation of (\ref{numericalSolution2}), we arrive at

\begin{equation} \label{csolution2}
\big\{g\big\}Y'_n = r'_n + \theta - \big\{g\big\}\delta.
\end{equation}

\noindent By (\ref{csolution1}) and (\ref{csolution2}), we can then rewrite (\ref{numericalSolution}) as

\begin{equation} \label{equation2}
Y_{ni} = y_n + h\textstyle\sum_{j=1}^s a_{ij}\big( f(Y_{nj}, Z_{nj})+\delta_{j}\big), \quad g f(Y_{ni}, Z_{ni}) = r'(t_{ni}) + \theta_{i}-g\delta_{i}, \quad i=1(1)s.
\end{equation}

\noindent Now consider an index one differential-algebraic system equivalent to (\ref{I2DAE}) in the form of 

\begin{equation} \label{I1DAE}
y' =  f(y,z), \quad gf(y,z) = r'(t).
\end{equation}

\noindent The above system is obtained by differentiating the second relation of (\ref{I2DAE}) with respect to $t$. It can be easily shown that the exact solution of (\ref{I1DAE}) is identical to that of (\ref{I2DAE}) if the initial conditions for $y$- and $z$-components are the same in both systems. The application of an $s$-stage IRK method to (\ref{I1DAE}) yields

\begin{equation} \label{I1RKformulation}
Y_{ni} = y_n + h\textstyle\sum_{j=1}^s a_{ij} f(Y_{nj}, Z_{nj}), \quad gf(Y_{ni},Z_{ni}) = r'(t_{ni}), \quad i = 1(1)s,
\end{equation}

\noindent with $y_{n+1}$ and $z_{n+1}$ given by (\ref{RKfinalstage}). According to Theorem 3.1 of \cite{hairer1989}, the solution of (\ref{I1RKformulation}) is bounded by

\begin{equation}
y_n - y(t_n) = O(h^p), \quad z_n - z(t_n) = O(h^{q+1}), \nonumber
\end{equation}

\noindent If $a_{si} = b_i$ for all $i$, the $z$-component in (\ref{I1RKformulation}) further satisfies

\begin{equation}
z_n-z(t_n) = O(h^{p}). \nonumber
\end{equation}

\noindent By comparing (\ref{I1RKformulation}) with (\ref{equation2}) and noticing the fact that $\delta_{i}$ and $\theta_{i}$ are of $O(h^{p})$, the statement of the theorem follows from a direct implementation of the lemmas given below. More specifically, Lemma \ref{lemma1} gives the existence and uniqueness of the solutions of (\ref{csolution1}) and (\ref{I1RKformulation}). Lemma \ref{lemma3} shows that if the values of $\delta_i$ and $\theta_i$ in (\ref{equation2}) are sufficiently small, the solution of (\ref{equation2}) has the same orders of convergence for both $y$-and $z$-components as those of (\ref{I1RKformulation}). Lemma \ref{lemma2} is established to support the proof of Lemma \ref{lemma3}.

\end{proof}

\begin{lemma} \label{lemma1}

Consider a system of equations regarding $y(t)\in \mathbb{R}^{m_y}$, and $z(t)\in \mathbb{R}^{m_z}$ given by

\begin{equation} \label{I1DAE2}
y' =  f(y,z), \quad g(y,z) = 0,
\end{equation}

\noindent where $f:\mathbb{R}^{m_y}\times\mathbb{R}^{m_z}\rightarrow\mathbb{R}^{m_y}$ and $g:\mathbb{R}^{m_y}\times\mathbb{R}^{m_z}\rightarrow\mathbb{R}^{m_z}$ are sufficiently smooth functions of $y$ and $z$, and $g_z^{-1}$ exists and satisfies

\begin{equation} \label{boundedness}
\norm{(g_z)^{-1}(y,z)} \leq M
\end{equation}

\noindent in a neighbourhood of the solution. Let $(\eta,\zeta)$ satisfies $g(\eta,\zeta)=O(h)$ and suppose that the inequality (\ref{boundedness}) holds in a neighbourhood of $(\eta,\zeta)$. Then, the non-linear system given by

\begin{equation} \label{unperturbedInternalStage}
Y_{i} = \eta + h\textstyle\sum_{j=1}^s a_{ij} f(Y_{j}, Z_{j}), \quad g(Y_{i},Z_{i}) = 0, \quad i =1(1)s,
\end{equation}

\noindent where $a_{ij}$ are Runge-Kutta coefficients, possesses a locally unique solution for $h \leq h_0$ (where $h_0$ is sufficiently small) and the solution satisfies

\begin{equation}
Y_i - \eta = O(h), \quad Z_i - \zeta = O(h).
\end{equation}

\end{lemma}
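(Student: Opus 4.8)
The plan is to exploit the index-1 structure of (\ref{I1DAE2}) — the bounded invertibility of $g_z$ — to decouple the algebraic stage unknowns $Z_i$ from the differential stage unknowns $Y_i$, and then to solve the resulting reduced system for $Y_i$ by a contraction-mapping argument that is uniform in the step size $h$. First I would solve the algebraic constraints. Since $\norm{(g_z)^{-1}(y,z)}\leq M$ in a neighbourhood of $(\eta,\zeta)$ and $g$ is smooth, the implicit function theorem guarantees that for every $Y_i$ in a fixed ball about $\eta$ the equation $g(Y_i,Z_i)=0$ has a locally unique root $Z_i=G(Y_i)$ near $\zeta$, and that $G$ is Lipschitz with a constant controlled by $M$ and the local bound on $g_y$. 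I would also record the estimate $\norm{G(\eta)-\zeta}=O(h)$: writing $\zeta_0:=G(\eta)$, the identity $g(\eta,\zeta_0)-g(\eta,\zeta)=\bar J\,(\zeta_0-\zeta)$ with $\bar J=\int_0^1 g_z(\eta,\zeta+\tau(\zeta_0-\zeta))\,d\tau$ an averaged Jacobian whose inverse is bounded by a multiple of $M$, together with $g(\eta,\zeta_0)=0$ and $g(\eta,\zeta)=O(h)$, gives $\norm{\zeta_0-\zeta}\leq \text{const}\cdot M\,\norm{g(\eta,\zeta)}=O(h)$.

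Next I would substitute $Z_j=G(Y_j)$ into the first relation of (\ref{unperturbedInternalStage}) to obtain a fixed-point problem in $Y=(Y_1,\ldots,Y_s)$ alone,
\begin{equation}
Y_i=\eta+h\textstyle\sum_{j=1}^s a_{ij}\,f\big(Y_j,G(Y_j)\big)=:\Phi_i(Y),\quad i=1(1)s. \nonumber
\end{equation}
Because of the explicit factor $h$ and the Lipschitz continuity of $f$ and $G$ on the fixed neighbourhood, $\Phi$ maps the ball $\{\,Y:\norm{Y_i-\eta}\leq\rho\,\}$ into itself and is a contraction with Lipschitz constant $O(h)$, for $\rho$ fixed and $h\leq h_0$ sufficiently small. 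The Banach fixed-point theorem then yields a locally unique $Y$, and reading the estimate straight off the fixed-point equation gives $\norm{Y_i-\eta}=O(h)$. Setting $Z_i=G(Y_i)$ recovers the algebraic stages, and
\begin{equation}
\norm{Z_i-\zeta}\leq\norm{G(Y_i)-G(\eta)}+\norm{\zeta_0-\zeta}=O(\norm{Y_i-\eta})+O(h)=O(h), \nonumber
\end{equation}
which is the claimed bound. Local uniqueness of the full stage vector $(Y_i,Z_i)$ follows from the uniqueness of the fixed point of $\Phi$ together with the uniqueness of the constraint root $G$.

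The step I expect to require the most care is not any single estimate but the bookkeeping that makes it all uniform in $h$: one must fix the neighbourhood and the radius $\rho$ first, verify that $G$ is well-defined and Lipschitz there, and then choose a single $h_0$ for which $\Phi$ is simultaneously a self-map and a contraction on that ball, so that the conclusion holds for all $h\leq h_0$ rather than for an $h$-dependent threshold. The subtlety that drives the $\zeta_0-\zeta=O(h)$ estimate — namely that $(\eta,\zeta)$ satisfies the constraint only approximately, $g(\eta,\zeta)=O(h)$, instead of exactly — is what distinguishes this from a textbook consistent-initial-value statement and is the only place the hypothesis $g(\eta,\zeta)=O(h)$ is genuinely used. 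Conceptually, however, the crux is the index-1 assumption itself: it is precisely the bounded invertibility of $g_z$ that permits the clean decoupling into an algebraic solve (handled by the implicit function theorem) and a differential solve (isolated as a contraction in $Y$); without it neither reduction would be well-posed.
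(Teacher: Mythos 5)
Your proposal is correct in substance but follows a genuinely different route from the paper. The paper attacks the coupled stage system for $(Y_i,Z_i)$ all at once: it runs Newton's iteration from the starting values $(\eta,\zeta)$, observes that the block Jacobian has a bounded inverse of the form $\bigl(\begin{smallmatrix} I+O(h) & O(h) \\ O(1) & O(1)\end{smallmatrix}\bigr)$ thanks to the bounded invertibility of $g_z$, notes that the first Newton increment is $O(h)$ because both the differential residual and $g(\eta,\zeta)$ are $O(h)$, and then invokes the Newton--Kantorovich theorem for existence, local uniqueness and the $O(h)$ estimates in one stroke. You instead decouple first: the implicit function theorem gives $Z_i=G(Y_i)$, and the remaining system in $Y$ alone is a contraction with constant $O(h)$, handled by Banach's fixed-point theorem. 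Your route is more elementary and makes the index-1 mechanism transparent; the paper's route has the advantage that the same Jacobian structure and starting-increment estimate are reused verbatim in the proof of Lemma~\ref{lemma2} (for the perturbed and homotopy systems), so the coupled Newton--Kantorovich setup amortizes across both lemmas. One point in your write-up deserves tightening: you estimate $\norm{\zeta_0-\zeta}=O(h)$ for $\zeta_0:=G(\eta)$ via an averaged Jacobian, but that identity presupposes that $\zeta_0$ exists, whereas the implicit function theorem in its textbook form anchors at an exact zero of $g$ and $(\eta,\zeta)$ only satisfies $g(\eta,\zeta)=O(h)$. You should first produce the exact root $\zeta_0$ near $\zeta$ (e.g.\ by Newton--Kantorovich applied to $z\mapsto g(\eta,z)$ starting at $\zeta$, using the $O(h)$ residual and the bound $M$ on $(g_z)^{-1}$) and only then read off the $O(h)$ distance; this is a routine repair, and it is in effect the one fragment of the paper's Newton argument that your decoupled approach cannot avoid.
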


\begin{proof}

Applying Newton's iteration method to the solution of the non-linear system (\ref{perturbedInternalStage}), for starting values $Y_i^{(0)}=\eta$ and $Z_i^{(0)}=\zeta$, the resulting Jacobian matrix has the form

\begin{equation} \label{Jacobian}
\left(\begin{matrix}
I - h\big(A\otimes I\big)\big\{f_y(\eta,\zeta)\big\} & - h\big(A\otimes I\big)\big\{f_z(\eta,\zeta)\big\} \\
\big\{g_y(\eta,\zeta)\big\} & \big\{g_z(\eta,\zeta)\big\}
\end{matrix}\right).
\end{equation}

\noindent The existence and boundedness of $(g_z)^{-1}(y,z)$ in a neighbourhood of $(\eta,\zeta)$ suggests that, for a sufficiently small time-step size $h$,  the matrix (\ref{Jacobian}) is non-singular and its inverse can be expressed as

\begin{equation} \label{inverseMatrix}
\left(
\begin{matrix}
I + O(h) & O(h) \\
O(1) & O(1)
\end{matrix}
\right).
\end{equation}

\noindent Meanwhile, the assumption $g(\eta,\zeta)=O(h)$ implies that the first increment of Newton's iteration is of $O(h)$. Consequently, the existence and unique of the solution of (\ref{perturbedInternalStage}) can be deduced from a direct implementation of the theorem of Newton-Kantorovich \citep{kantorovich1964, ortega1970}.

\end{proof}

\begin{lemma} \label{lemma2} 

Consider the problem (\ref{I1DAE2}), and let $(Y_{ni},Z_{ni})$ and $(\hat{Y}_{ni},\hat{Z}_{ni})$ be,  respectively, given by (\ref{unperturbedInternalStage}) and a perturbed scheme

\begin{equation} \label{perturbedInternalStage}
\hat{Y}_{i} = \hat{\eta} + h\textstyle\sum_{j=1}^s a_{ij} f(\hat{Y}_{j}, \hat{Z}_{j})+h\delta_{i}, \quad g(\hat{Y}_{i},\hat{Z}_{i}) = \theta_{i}, \quad i = 1(1)s,
\end{equation}

\noindent In addition to the assumptions of Lemma \ref{lemma1}, suppose that

\begin{equation} \label{lemmaAssumption}
\hat{\eta}-\eta=O(h),\quad \hat{\zeta}-\zeta=O(h), \quad \delta_i = O(1), \quad \theta_i=O(h), \nonumber
\end{equation}

\noindent Then we have for $h \leq h_0$ the estimates

\begin{equation} \label{estimate1}
\begin{split}
\norm{\hat{Y}_i-Y_i} &\leq \norm{\varDelta \eta}+C\big(h\norm{\varDelta \eta}+h\norm{\delta}+h\norm{\theta}\big), \\
\norm{\hat{Z}_i-Z_i} &\leq C\big(\norm{\varDelta \eta}+h\norm{\delta}+\norm{\theta}\big),
\end{split}
\end{equation}

\noindent where $\varDelta \eta \coloneqq \hat{\eta}-\eta$, $\delta \coloneqq (\delta_1,\ldots,\delta_s)^T$, $\theta \coloneqq (\theta_1,\ldots,\theta_s)^T$. If the coefficient matrix $A=(a_{ij})$ is non-singular, we further have

\begin{equation} \label{estimate2}
\begin{split}
\norm{\varDelta y} &\leq \norm{\varDelta \eta} + C(h\norm{\eta}+h\norm{\delta}+h\norm{\theta}), \\
\norm{\varDelta z} &\leq \alpha \norm{\varDelta \zeta} + C(\norm{\eta}+h\norm{\delta}+\norm{\theta}),
\end{split} 
\end{equation}

\noindent where $\alpha = |R(\infty)|$, $\varDelta y = \hat{y}-y$, $\varDelta z = \hat{z}-z$, and the pairs of values $(y,z)$ and $(\hat{y},\hat{z})$ are respectively given by

\begin{equation} \label{finalStage}
\begin{split}
y = \eta+\textstyle\sum_{i=1}^s b_i w_{ij}(Y_j-\eta), \quad z = \zeta+\textstyle\sum_{i=1}^s b_i w_{ij}(Z_j-\zeta), \\
\hat{y} = \hat{\eta}+\textstyle\sum_{i=1}^s b_i w_{ij}(\hat{Y}_j-\hat{\eta}), \quad \hat{z} = \hat{\zeta}+\textstyle\sum_{i=1}^s b_i w_{ij}(\hat{Z}_j-\hat{\zeta}).
\end{split} 
\end{equation}

\end{lemma}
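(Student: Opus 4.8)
The plan is to compare the two Runge-Kutta schemes stagewise, linearise their difference with exact (integral) averaged Jacobians, eliminate the algebraic increments using the bounded inverse of $g_z$, and then solve the remaining small linear system for the differential increments by a Neumann-series argument; the final-stage bounds (\ref{estimate2}) then follow by collapsing the stage quadratures through $A^{-1}$. First I would subtract (\ref{unperturbedInternalStage}) from (\ref{perturbedInternalStage}), writing $\varDelta Y_i \coloneqq \hat{Y}_i - Y_i$ and $\varDelta Z_i \coloneqq \hat{Z}_i - Z_i$, and use the integral mean-value form of Taylor's theorem to write $f(\hat{Y}_j,\hat{Z}_j)-f(Y_j,Z_j)=\bar{f}_y^{(j)}\varDelta Y_j+\bar{f}_z^{(j)}\varDelta Z_j$ and $g(\hat{Y}_i,\hat{Z}_i)-g(Y_i,Z_i)=\bar{g}_y^{(i)}\varDelta Y_i+\bar{g}_z^{(i)}\varDelta Z_i$ with averaged Jacobians and \emph{no} remainder terms. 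By Lemma~\ref{lemma1} for the unperturbed scheme and its evident extension to the $O(h)$-perturbed scheme (the terms $h\delta_i$ and $\theta_i$ being $O(h)$), together with the hypotheses $\hat{\eta}-\eta=O(h)$ and $\hat{\zeta}-\zeta=O(h)$, all stage values of both schemes lie within $O(h)$ of $(\eta,\zeta)$; hence $\varDelta Y_i,\varDelta Z_i=O(h)$, every averaging point stays in the neighbourhood where (\ref{boundedness}) holds, and each $\bar{g}_z^{(i)}$ is invertible with uniformly bounded inverse for $h\le h_0$.

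Next I would solve the linearised constraint $\bar{g}_y^{(i)}\varDelta Y_i+\bar{g}_z^{(i)}\varDelta Z_i=\theta_i$ for $\varDelta Z_i=(\bar{g}_z^{(i)})^{-1}(\theta_i-\bar{g}_y^{(i)}\varDelta Y_i)$, which already yields $\|\varDelta Z_i\|\le C(\|\theta\|+\|\varDelta Y_i\|)$ — note that $\theta_i$ enters here without any factor $h$. Substituting this into the linearised differential relation eliminates the algebraic unknowns and collapses the system into a single linear equation for the vector collecting the $\varDelta Y_i$, of the form $(I-h\mathcal{M})\varDelta Y=\mathbbm{1}\otimes\varDelta\eta+h\,\psi(\delta,\theta)$, where $\mathcal{M}$ is assembled from $a_{ij}(\bar{f}_y-\bar{f}_z\bar{g}_z^{-1}\bar{g}_y)$ and $\psi$ is linear in $\delta,\theta$. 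For $h\le h_0$ the Neumann series gives $(I-h\mathcal{M})^{-1}=I+O(h)$, so that $\varDelta Y_i=\varDelta\eta+O(h)\big(\|\varDelta\eta\|+\|\delta\|+\|\theta\|\big)$, which is the first line of (\ref{estimate1}); back-substitution into the expression for $\varDelta Z_i$ then produces the second line, with $\theta$ retained at full strength.

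For (\ref{estimate2}) I would exploit the invertibility of $A$ to collapse the stage quadratures. Left-multiplying the stage relations by $w_{ij}$ and summing turns the $y$-update in (\ref{finalStage}) into the equivalent classical form $y=\eta+h\sum_i b_i f(Y_i,Z_i)$, and likewise $\hat{y}=\hat{\eta}+h\sum_i b_i f(\hat{Y}_i,\hat{Z}_i)+h\sum_{i,j}b_i w_{ij}\delta_j$; subtracting and bounding $f(\hat{Y}_i,\hat{Z}_i)-f(Y_i,Z_i)$ by (\ref{estimate1}) gives the $\varDelta y$ bound, the single factor $h$ in front of the quadrature converting the $O(\|\theta\|)$ stage difference into the $h\|\theta\|$ term. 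For the algebraic component I would rewrite the $z$-update as $\varDelta z=R(\infty)\,\varDelta\zeta+\sum_{i,j}b_i w_{ij}\varDelta Z_j$, using $\sum_{i,j}b_i w_{ij}=b^{T}A^{-1}\mathbbm{1}=1-R(\infty)$ from (\ref{Rinfty}); bounding the remaining sum by the $\varDelta Z$ estimate of (\ref{estimate1}) and recalling $\alpha=|R(\infty)|$ delivers $\|\varDelta z\|\le\alpha\|\varDelta\zeta\|+C(\|\varDelta\eta\|+h\|\delta\|+\|\theta\|)$.

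The main obstacle is the asymmetric bookkeeping of the powers of $h$: the algebraic perturbation $\theta_i$ must appear at full strength (no factor $h$) in the $\varDelta Z$ and $\varDelta z$ estimates, yet be damped by one factor $h$ in the $\varDelta Y$ and $\varDelta y$ estimates, while the leading coefficient $1$ multiplying $\varDelta\eta$ — and the amplification factor $\alpha$ multiplying $\varDelta\zeta$ — must be kept sharp. Tracking this cleanly requires verifying that eliminating $\varDelta Z$ feeds the term $h(A\otimes I)\bar{f}_z\bar{g}_z^{-1}\theta$, carrying exactly one factor $h$ from the stage sum, into the differential equation, and that the Neumann inversion of $I-h\mathcal{M}$ does not disturb the unit coefficient on $\varDelta\eta$. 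The remaining ingredients — the exact integral-mean-value linearisation, the Banach invertibility of $I-h\mathcal{M}$ and of the averaged $\bar{g}_z^{(i)}$, and the $A^{-1}$ collapse of the quadratures — are routine.
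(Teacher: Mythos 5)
Your proposal is correct and follows essentially the same route as the paper's proof: both reduce the stage differences to the block linear system with matrix $\bigl(\begin{smallmatrix} I-h(A\otimes I)\{f_y\} & -h(A\otimes I)\{f_z\}\\ \{g_y\} & \{g_z\}\end{smallmatrix}\bigr)$, invert it as $\bigl(\begin{smallmatrix} I+O(h) & O(h)\\ O(1) & O(1)\end{smallmatrix}\bigr)$, and obtain (\ref{estimate2}) from $\varDelta y=\varDelta\eta+(b^TA^{-1}\otimes I)(\varDelta Y-\mathbbm{1}\otimes\varDelta\eta)$ and $\varDelta z=R(\infty)\varDelta\zeta+(b^TA^{-1}\otimes I)\varDelta Z$. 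The only (immaterial) difference is that the paper sets up the linearization via a homotopy in $\tau$ and integrates the $\tau$-derivative, whereas you subtract the two schemes directly and use integral-averaged Jacobians; your bounds also correctly carry $\varDelta\eta$ where the paper's (\ref{estimate2}) has the apparent typo $\norm{\eta}$.
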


\begin{proof}

Similar to the proof of Lemma \ref{lemma1}, we conclude from (\ref{lemmaAssumption}) the system (\ref{perturbedInternalStage}) possesses a locally unique solution $(\hat{Y}_{i},\hat{Z}_{i})$ satisfying $\hat{Y}_{i}-\hat{\eta}=O(h)$ and $\hat{Z}_{i}-\hat{\zeta}=O(h)$. To ingestive the difference between $(Y_i,Z_i)$ of (\ref{unperturbedInternalStage}) and $(\hat{Y}_i,\hat{Z}_i)$ of (\ref{perturbedInternalStage}), we consider the homotopy

\begin{equation} \label{homotopy}
Y_{i} = \eta + h\textstyle\sum_{j=1}^s a_{ij} f(Y_{j}, Z_{j}) + \tau(\hat{\eta}-\eta+h\delta_i), \quad g(Y_{i},Z_{i})  = \tau\theta_i, \quad i=1(1)s.
\end{equation}

\noindent It can be easily seen that when $\tau = 0$, the system (\ref{homotopy}) reduces to the unperturbed Runge-Kutta method (\ref{unperturbedInternalStage}), and when $\tau = 1$, it is equivalent to the perturbed Runge-Kutta scheme (\ref{perturbedInternalStage}). If we consider $Y_{i}$ and $Z_{i}$ in (\ref{homotopy}) as functions of $\tau$, differentiating (\ref{homotopy}) with respect to $\tau$ gives for $i=1(1)s$

\begin{equation} \label{dhomotopy}
\begin{split}
&Y'_{i} = h\textstyle\sum_{j=1}^s a_{ij}\big(f_y(Y_{j}, Z_{nj})Y'_{j} + f_z(Y_{j}, Z_{nj})Z'_{j}\big) + \big(\hat{\eta}-\eta+h\delta_i\big), \\
&g_y(Y_{i},Z_{i})Y'_{i} + g_z(Y_{i},Z_{i})Z'_{i}  = \theta_i,
\end{split}
\end{equation}

\noindent If we define $Y \coloneqq (Y_{1},\ldots,Y_{s})^T$, $Z' \coloneqq (Z_{1},\ldots,Z_{s})^T$, $\delta \coloneqq (\delta_1,\ldots,\delta_s)^T$, $\theta \coloneqq (\theta_1,\ldots,\theta_s)^T$ and use the notation

\begin{equation}
\{ f_y \} \coloneqq \text{blockdiag} \big( f_y(Y_{n1}, Z_{n1}), \ldots, f_y(Y_{ns}, Z_{ns})\big),
\end{equation}

\noindent and similarly for $f_z$, $g_y$ and $g_z$, (\ref{dhomotopy}) can therefore be rewritten as

\begin{equation} \label{dhomotopyCompact}
\left(\begin{matrix}
I - h(A\otimes I)\{f_y\} & - h(A\otimes I)\{f_z\} \\
\{g_y\} & \{g_z\}
\end{matrix}\right)
\left(\begin{matrix}
Y' \\
Z'
\end{matrix}\right) 
=
\left(
\begin{matrix}
\mathbbm{1} \otimes \varDelta \eta + h\delta \\
\theta
\end{matrix}\right),
\end{equation}

\noindent The inverse of $\{g_z\}$ exists and is bounded by (\ref{boundedness}) provided that all $(Y_i,Z_i)$ remain in a small neighbourhood of $(\eta,\zeta)$. Consequently, the matrix on the LHS of (\ref{dhomotopyCompact}) has a bounded inverse of the form (\ref{inverseMatrix}) for a sufficiently small time-step size $h$. We now introduce the notation $\varDelta Y = (\varDelta Y_1,\ldots,\varDelta Y_2)^T$, $\varDelta Y_{i} \coloneqq \hat{Y}_{i}-Y_{i}$ and similarly for the $z$-component. Thus the fact

\begin{equation} \label{Yintegral}
\varDelta Y = \textstyle\int_0^1 Y' \ \mathrm{d}\tau,\quad \varDelta Z = \textstyle\int_0^1 Z' \ \mathrm{d}\tau,
\end{equation}

\noindent together with the mean value theorem implies that

\begin{equation} \label{dhomotopyBounded}
\begin{split}
\varDelta Y &= \mathbbm{1} \otimes \varDelta \eta + O(h\norm{\varDelta \eta}+h\norm{\delta}+h\norm{\theta}), \\
\varDelta Z &= O(\norm{\varDelta \eta}+h\norm{\delta}+\norm{\theta}).
\end{split}
\end{equation}

\noindent which proves the estimates (\ref{estimate1}) of the lemma. (\ref{estimate2}) remains to be proved. Using (\ref{Rinfty}) and (\ref{finalStage}) we obtain

\begin{equation} \label{differenceFinalStage}
\begin{split}
\varDelta y &= \varDelta \eta + \big(b^TA^{-1}\otimes I\big)\big(\varDelta Y - \mathbbm{1}\otimes \varDelta \eta\big), \\
\varDelta z &= R(\infty) \varDelta \zeta + \big(b^TA^{-1}\otimes I\big) \varDelta Z.
\end{split}
\end{equation}

\noindent Inserting (\ref{dhomotopyBounded}) into (\ref{differenceFinalStage}), the estimates (\ref{estimate2}) of the lemma now becomes straightforward.

\end{proof}

\begin{lemma} \label{lemma3}

Consider the problem (\ref{I1DAE2}) and let $(y_n,z_n)$ and $(\hat{y}_n,\hat{z}_n)$ be given by an implicit Runge-Kutta method

\begin{subequations} \label{lemma3_1}
\begin{gather}
Y_{ni} = y_n + h\textstyle\sum_{j=1}^s a_{ij} f(Y_{nj}, Z_{nj}), \quad g(Y_{ni},Z_{ni})=0, \quad i=1(1)s, \\
y_{n+1} = y_n + \textstyle\sum_{i,j=1}^s b_i \omega_{ij}(Y_{nj}-y_n), \quad z_{n+1} = z_n + \textstyle\sum_{i,j=1}^s b_i \omega_{ij}(Z_{nj}-z_n),
\end{gather}
\end{subequations}

\noindent and a perturbed scheme

\begin{subequations} \label{lemma3_2}
\begin{gather}
\hat{Y}_{ni} = \hat{y}_n + h\textstyle\sum_{j=1}^s a_{ij} f(\hat{Y}_{nj}, \hat{Z}_{nj})+h\delta_{i}, \quad g(\hat{Y}_{ni},\hat{Z}_{ni}) = \theta_{i}, \quad i = 1(1)s, \\
\hat{y}_{n+1} = \hat{y}_n + \textstyle\sum_{i,j=1}^s b_i \omega_{ij}(\hat{Y}_{nj}-\hat{y}_n), \quad \hat{z}_{n+1} = \hat{z}_n + \textstyle\sum_{i,j=1}^s b_i \omega_{ij}(\hat{Z}_{nj}-\hat{z}_n),
\end{gather}
\end{subequations}

\noindent respectively. Suppose that the initial values of the method (\ref{lemma3_1}) are consistent and those of (\ref{lemma3_2}) satisfy

\begin{equation} \label{lemma3_3}
\hat{y}_0-y_0=O(h), \quad \hat{z}_0-z_0 =O(h),
\end{equation}

\noindent Also suppose the Runge-Kutta method (\ref{lemma3_1}), satisfying $\alpha=|R(\infty)|<1$, is of classical order $p$ and stage order $q$ with $p \geq q+1$ and $q\geq 1$. Then the estimates

\begin{equation} \label{lemma3_4}
\begin{split}
\norm{\varDelta y_n} &\leq C\big(\norm{\varDelta y_0}+\norm{\delta}+\norm{\theta}\big), \\
\norm{\varDelta z_n} &\leq C\big(\norm{\varDelta y_0}+\alpha^n\norm{\varDelta z_0}+\norm{\delta}+\norm{\theta}\big),
\end{split}
\end{equation}

\noindent holds for $\delta_i = O(h)$, $\theta_{i}=O(h)$, $h \leq h_0$ and $nh<Const$.

\end{lemma}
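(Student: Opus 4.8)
The plan is to promote the one-step difference estimates of Lemma~\ref{lemma2} to a recursion in the step index $n$ and then resolve the two resulting scalar recursions separately. At the $n$-th step I would invoke Lemma~\ref{lemma2} with the starting pair $(\eta,\zeta)$ taken to be the reference values $(y_n,z_n)$ of (\ref{lemma3_1}) and the perturbed pair $(\hat\eta,\hat\zeta)$ taken to be $(\hat y_n,\hat z_n)$ of (\ref{lemma3_2}); then $\varDelta\eta=\varDelta y_n$, $\varDelta\zeta=\varDelta z_n$, and the one-step outputs of (\ref{finalStage}) are precisely $y_{n+1}$, $z_{n+1}$, $\hat y_{n+1}$, $\hat z_{n+1}$. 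With this identification, estimate (\ref{estimate2}) becomes the coupled recursion
\begin{equation}
\norm{\varDelta y_{n+1}} \leq (1+Ch)\norm{\varDelta y_n}+Ch\big(\norm{\delta}+\norm{\theta}\big), \quad \norm{\varDelta z_{n+1}} \leq \alpha\norm{\varDelta z_n}+C\big(\norm{\varDelta y_n}+h\norm{\delta}+\norm{\theta}\big). \nonumber
\end{equation}

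Before iterating I would verify that Lemma~\ref{lemma2} genuinely applies at every step, which is the inductive heart of the argument. Its hypotheses demand $\hat y_n-y_n=O(h)$ and $\hat z_n-z_n=O(h)$, together with $(y_n,z_n)$ lying in the neighbourhood where (\ref{boundedness}) holds and $g(y_n,z_n)=O(h)$. The base case $n=0$ is furnished by the consistency of (\ref{lemma3_1}) and by (\ref{lemma3_3}); the defect $g(y_n,z_n)=O(h)$ along the reference trajectory is exact for stiffly accurate schemes and otherwise follows from the convergence of the index-1 method (\ref{lemma3_1}). Assuming all differences are $O(h)$ up to step $n$, the recursion above returns differences that are again $O(h)$ at step $n+1$, since $\norm{\varDelta y_0}$, $\norm{\delta}$, $\norm{\theta}$ are all $O(h)$; hence the perturbed iterates never leave the neighbourhood and the per-step constant $C$ may be taken uniform. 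This closes the induction and legitimises the recursion for all $nh<Const$.

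The $y$-recursion is a standard discrete Gronwall inequality; iterating it gives
\begin{equation}
\norm{\varDelta y_n} \leq (1+Ch)^n\norm{\varDelta y_0}+\big((1+Ch)^n-1\big)\big(\norm{\delta}+\norm{\theta}\big), \nonumber
\end{equation}
and the bound $(1+Ch)^n\leq e^{Cnh}\leq e^{C\cdot Const}$ on the finite interval collapses this to the first line of (\ref{lemma3_4}), namely $\norm{\varDelta y_n}\leq C(\norm{\varDelta y_0}+\norm{\delta}+\norm{\theta})$ with a constant independent of $n$ and $h$. For the $z$-component I would instead exploit the contractivity $\alpha=|R(\infty)|<1$ and resolve the recursion geometrically,
\begin{equation}
\norm{\varDelta z_n} \leq \alpha^n\norm{\varDelta z_0}+C\textstyle\sum_{k=0}^{n-1}\alpha^{n-1-k}\big(\norm{\varDelta y_k}+h\norm{\delta}+\norm{\theta}\big). \nonumber
\end{equation}
Inserting the uniform $y$-bound just obtained for $\norm{\varDelta y_k}$ and using $\sum_{k=0}^{n-1}\alpha^{n-1-k}\leq 1/(1-\alpha)$ absorbs the whole sum into a single constant and yields the second line of (\ref{lemma3_4}).

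I expect the main obstacle to be not the algebra of the two recursions but the bootstrapping of the second step: one must verify simultaneously for all $n$ that the perturbed trajectory stays inside the domain of validity of Lemma~\ref{lemma2}, so that a single constant $C$ works uniformly along the whole interval. The two structural facts that make this possible are that $nh<Const$ caps the geometric growth of the $y$-errors and that $\alpha<1$ prevents amplification of the $z$-errors, so that neither component can escape the $O(h)$ regime in which the single-step estimates were derived.
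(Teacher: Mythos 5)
Your proposal is correct and follows essentially the same route as the paper's proof: apply Lemma~\ref{lemma2} step by step with $(\eta,\zeta)=(y_n,z_n)$ and $(\hat\eta,\hat\zeta)=(\hat y_n,\hat z_n)$, resolve the $y$-recursion by a discrete Gronwall argument and the $z$-recursion by the contraction $\alpha<1$, and justify the $O(h)$ hypotheses of Lemma~\ref{lemma2} by a bootstrap (the paper phrases this as an a priori assumption with constants $C_0,C_1$ chosen sufficiently large at the end, which is the same device as your induction). The only cosmetic difference is that you write out the geometric sum $\sum_k\alpha^{n-1-k}\leq 1/(1-\alpha)$ explicitly where the paper merely refers back to the technique of (\ref{lemma3_5}).
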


\begin{proof}

According to Theorem 3.1 of \cite{hairer1989}, the assumption that the Runge-Kutta method (\ref{lemma3_1}), satisfying $|R(\infty)|<1$, is of classical order $p$ and stage order $q$ with $p \geq q+1$ and $q\geq 1$ implies

\begin{equation}
y_n - y(t_n) =O(h), \quad z_n - z(t_n) = O(h), \quad g(y_n,z_n) = O(h).
\end{equation}

\noindent In order to be able to apply Lemma \ref{lemma2}, we suppose that

\begin{equation} \label{assumption}
\quad \norm{\hat{y}_n-y_n} \leq C_0 h, \quad \norm{\hat{z}_n-z_n} \leq C_1 h,
\end{equation}

\noindent which will be justified later. The application of Lemma \ref{lemma2} yields the recurrence relations

\begin{subequations}
\begin{align}
\norm{\varDelta y_{n+1}} &\leq \norm{\varDelta y_{n}} + C\big(h\norm{\varDelta y_{n}}+h\norm{\delta}+h\norm{\theta}\big),  \label{recurrence_a} \\
\norm{\varDelta z_{n+1}} &\leq \alpha \norm{\varDelta z_{n}} + C\big(\norm{\varDelta y_{n}}+h\norm{\delta}+\norm{\theta}\big). \label{recurrence_b}
\end{align}
\end{subequations}

\noindent By deduction, (\ref{recurrence_a}) implies that

\begin{equation} \label{lemma3_5}
\begin{split}
\norm{\varDelta y_{n}} &\leq (1+Ch)\norm{\varDelta y_{n-1}} + C\big(h\norm{\delta}+h\norm{\theta}\big) \\
& \leq (1+Ch)\norm{\varDelta y_{n-2}} + C(1+Ch)\big(h\norm{\delta}+h\norm{\theta}\big)+ C\big(h\norm{\delta}+h\norm{\theta}\big) \\
& \cdots \\
& \leq (1+Ch)^n\norm{\varDelta y_{0}} + \sum_{i=1}^n (1+Ch)^n \cdot C\big(h\norm{\delta}+h\norm{\theta}\big).
\end{split}
\end{equation}

\noindent Considering the fact that $(1+Ch)^n = 1 + O(h)$ for $nh \leq Const$, we obtain from (\ref{lemma3_5}) that

\begin{equation} \label{lemma3_6}
\norm{\varDelta y_{n}} \leq C\big(\norm{\varDelta y_{0}}+\norm{\delta}+\norm{\theta}\big).
\end{equation}

\noindent Inserting (\ref{lemma3_6}) into (\ref{recurrence_b}), this yields

\begin{equation} \label{lemma3_7}
\norm{\varDelta z_{n+1}} \leq \alpha \norm{\varDelta z_{n}} + C\big(\norm{\varDelta y_0}+h\norm{\delta}+\norm{\theta}\big).
\end{equation}

\noindent By using the technique similar to that of (\ref{lemma3_5}), we also obtain

\begin{equation} \label{lemma3_8}
\norm{\varDelta z_n} \leq C\big(\norm{\varDelta y_0}+\alpha^n\norm{\varDelta z_0}+\norm{\delta}+\norm{\theta}\big),
\end{equation}

\noindent which proves the statement of the lemma. We now justify the assumption (\ref{assumption}). On easily verifies that the constant in (\ref{recurrence_a}) can be chosen independently of $C_0$ or $C_1$ if we restrict the step size $h$ such that $hC_0$ is bounded by an $h$-independent constant. Thus the constants in (\ref{lemma3_6})$\sim$(\ref{lemma3_8}) do not depend on $C_0$ or $C_1$ either. The assumption (\ref{assumption}) then follows from (\ref{lemma3_6}), (\ref{lemma3_8}) and (\ref{lemmaAssumption}) provided that $C_0$ and $C_1$ are chosen sufficiently large.

\end{proof}

The convergence results given in Theorem \ref{theorem1} highlights the efficiency of the stiff-accurate DIRK schemes over the non-stiff-accurate ones. For they not only attain the classical order of convergence for both the velocity and pressure, but also do not require the previous information of pressures. In comparison, a DIRK scheme which is not stiff-accurate can only deliver second-order of convergence for pressures at most and requires the previous information of the pressure. It is noted that the equality (\ref{RKConstraint}) dose not define $\theta_i$ uniquely. Therefore, the specification of this variable can be quite arbitrary. For still-accurate Runge-Kutta methods we can simply set $\theta_i$ as

\begin{equation} \label{stiffaccurateRKConstraint}
h\theta_i =
\begin{cases}
0 & i < s \\
r(t_{n+1}) - r(t_n) - h\sum_{j=1}^s b_{i} r'(t_{ni}) & i = s
\end{cases}.
\end{equation}

\noindent It is easy to see the resulting $\theta_i$ are of $O(h^p)$ which satisfies the requirements of Theorem \ref{theorem1}.

\subsection{Low storage implementation of stiff-accurate DIRK methods} \label{section3.4}

The direct approach of applying an $s$-stage DIRK scheme to the differential-algebraic problem (\ref{I2DAE}) requires the full storage of internal stage $Y_{ni}$ and $Z_{ni}$ values. In contrast, if the DIRK scheme is implemented with the proposed method (\ref{RKformulation}), the number of the storage locations needed for the algebraic component reduce to two with one storage location for the algebraic component at the current internal stage and the other one for the contribution of the algebraic components from the previous time-step and internal stages to the terms on the RHS of (\ref{zcomponent}). If the DIRK scheme considered is stiff-accurate, the number of the storage locations required for the algebraic component further reduces to one. 

In the meantime, it is in general difficult for a non-stiff-accurate DIRK scheme to further reduce the number of the storage locations required by differential components.  However, if the DIRK schemes are stiff accurate and their coefficients possess certain regular patterns, the low-storage implementation of these schemes are possible. In the light of the work of \cite{van1972} and \cite{kennedy2000} on the low-storage explicit Runge-Kutta schemes for ordinary differential equations, we first consider a family of stiff-accurate DIRK schemes taking the tableau of the form

\begin{equation} \label{DIRKfamily1}
\begin{array}{ l | l  l  l  l  l}		
  c_1 & a_{11} &  &  &  &  \\
  c_2 & b_1 & a_{22} &  &  &  \\
  c_3 & b_1 & b_2 & a_{33} &  &  \\
 \vdots  & \vdots &  \vdots & \ddots & \ddots &  \\
  c_s & b_1  & b_2 &  \ldots & b_{s-1}  & a_{ss} \\ \cline{1-6}
  c_s & b_1  & b_2 &  \ldots & b_{s-1}  & a_{ss}
\end{array}
\end{equation}

\noindent where $a_{ii}$ are non-zero values. It allows $2s-1$ degrees of freedom (DOF), where $s$ is the stage numbers, to satisfy the order conditions. More specifically, we have 3 DOFs for a two-stage scheme and 5 DOFs for a three-stage scheme. Considering the number of order conditions for order two and order three are 2 and 4 respectively, therefore we are able to attain the classical order of 2 using two stages and the classical order of 3 using three stages among the schemes (\ref{DIRKfamily1}). For example, a two-stage second-order scheme of (\ref{DIRKfamily1}) in which all the diagonal coefficients are identical takes the form

\begin{equation} \label{SDIRK2}
\begin{array}{ c | c  c }
  \gamma & \gamma & \\
  1 & 1-\gamma & \gamma \\ \cline{1-3}
  & 1-\gamma & \gamma \\
\end{array}
\end{equation}

\noindent where $\gamma = 1 - \sqrt{2}/2$. It is noted that the coefficients of this DIRK scheme (denoted by SDIRK2 hereafter) was first given by \cite{alexander1977}. For the low-storage implementation of this family of schemes in the method (\ref{RKformulation}), we propose a scheme that uses two register for $y$ (namely $y$, $y^* \in \mathbb{R}^{N_y}$) and $r$ (namely $r$, $r^* \in \mathbb{R}^{N_z}$), and one register for $z$ (namely $z\in\mathbb{R}^{N_z}$):

\begin{equation} \label{lowStorage1}
\begin{array}{l}
\text{for } i = 1:s \\
\quad \quad \text{if } i == 1, \  y^* \leftarrow y, \ r^* \leftarrow r, \ \text{else}  \\
\quad \quad \quad \quad y^* \leftarrow y^* + b_{i-1} \omega_{i-1,i-1}(y-y^*) \\
\quad \quad \quad \quad r^* \leftarrow r^* + b_{i-1} \omega_{i-1,i-1}(r-r^*) \\
\quad \quad \text{end} \\
\quad \quad \text{if } i == s,\ r \leftarrow r(t_{n+1}), \ \text{else}\\
\quad \quad \quad \quad r \leftarrow r^* + ha_{ii}r'(t_{ni}) \\
\quad \quad \text{end} \\
\quad \quad \text{solve } y = y^* + ha_{ii}f(y,z), \quad g(z) = r \\
\text{end}
\end{array}
\end{equation}

\noindent Next we consider a family of stiff-accurate DIRK schemes in the form of

\begin{equation} \label{DIRKfamily2}
\begin{array}{ l | l  l  l  l  l}		
  c_1 & a_{11} &  &  &  &  \\
  c_2 & a_{21} & a_{22} &  &  &  \\
  c_3 & b_1 & a_{32} & a_{33} &  &  \\
 \vdots  & \vdots &  \vdots & \ddots & \ddots &  \\
  c_s & b_1  & b_2 &  \ldots & a_{s,s-1}  & a_{ss} \\ \cline{1-6}
  c_s & b_1  & b_2 &  \ldots & a_{s,s-1}  & a_{ss}
\end{array}
\end{equation}

\noindent where both $a_{ii}$ and $a_{i,i-1}$ are non-zero. Different from the first family of schemes, the schemes given by (\ref{DIRKfamily2}) allows $3s-3$ DOFs to satisfy the order conditions, that is to say we have 3 DOFs for a two-stage scheme and 6 DOFs for a three-stage scheme. Considering the number of order conditions for the fourth-order accuracy is 8 which is larger than the DOFs of a three-stage scheme. Therefore, the highest classical order we can achieve with a two-stage or a three-stage (\ref{DIRKfamily2}) is actually identical to that of (\ref{DIRKfamily1}). Also note that we have four order conditions for the classical order 3, thus we happen to be able to construct a three-stage third order scheme of (\ref{DIRKfamily2}) with all $a_{ii}$ identical (which takes up 2DOFs), and this scheme can be written as:

\begin{equation} \label{SDIRK3}
\begin{array}{ c | c  c  c}
  \gamma & \gamma &  & \\
  c_2 & c_2-\gamma & \gamma  &\\
  1 & 1-\gamma-b_2 & b_2 & \gamma \\ \cline{1-4}
  & 1-\gamma-b_2 & b_2 & \gamma \\
\end{array}
\end{equation}

\noindent with

\begin{equation} 
\begin{array}{l}
  c_2 = (1+\gamma)/2, \\
  b_2 = (6\gamma^2-20\gamma+5)/4,
\end{array}\nonumber
\end{equation}

\noindent and $\gamma$ being the root of $6\gamma^3 - 18\gamma^2 + 9\gamma -1 = 0$ lying in $(1/6,1/2)$. Also note that the Runge-Kutta coefficients of (\ref{SDIRK3}) (denoted by SDIRK3 hereafter) was also first introduced in \cite{alexander1977}.  For the low-storage implementation of the second family of schemes in the proposed method (\ref{RKformulation}), we propose a scheme that uses three register for $y$ (namely $y$, $y^*$, $y^{**} \in \mathbb{R}^{N_y}$) and $r$ (namely $r$, $r^*$, $r^{**} \in \mathbb{R}^{N_z}$), and one register for $z$ (namely $z\in\mathbb{R}^{N_z}$):

\begin{equation} \label{lowStorage2}
\begin{array}{l}
\text{for } i = 1:s \\
\quad \quad \text{if } i == 1 \\
\quad\quad \quad \quad y^* \leftarrow y,\ y^{**} \leftarrow y, \ r^* \leftarrow r, \  r^{**} \leftarrow r \\
\quad \quad \text{else} \\
\quad \quad \quad \quad y^* \leftarrow y^{**} + a_{i,i-1} \omega_{i-1,i-1}(y-y^*) \\
\quad \quad \quad \quad r^* \leftarrow r^{**} + a_{i,i-1} \omega_{i-1,i-1}(r-r^*) \\
\quad \quad \quad \quad \text{if } i < s \\
\quad \quad \quad \quad \quad \quad y^{**} \leftarrow y^{**} + b_{i-1} (y^*-y^{**})/a_{i,i-1} \\
\quad \quad \quad \quad \quad \quad r^{**} \leftarrow r^{**} + b_{i-1} (r^*-r^{**})/a_{i,i-1} \\
\quad \quad \quad \quad \quad \quad r \leftarrow r^* + ha_{ii}r'(t_{ni}) \\
\quad \quad \quad \quad \text{else} \\
\quad \quad \quad \quad \quad \quad r \leftarrow r(t_{n+1}) \\
\quad \quad \quad \quad \text{end} \\
\quad \quad \text{end} \\
\quad \quad \text{solve } y = y^* + ha_{ii}f(y,z), \quad g(z) = r \\
\text{end}
\end{array}
\end{equation}

\noindent Higher-order stiff-accurate DIRK schemes with more internal stages can be designed for both (\ref{DIRKfamily1}) and (\ref{DIRKfamily2}). However, we shall only consider SDIRK2 and SDIRK3 in this study. This is because our major attention is focused on if the stiff accurate DIRK schemes implemented with (\ref{RKformulation}) achieves higher-order accuracy for algebraic components when compared to the direct approach, rather than investigating the highest classical order a stiff-accurate DIRK scheme can achieve. In the meantime, both SDIRK2 and SDIRK3 have reached the optimal classical order concerning the number of the internal stages and the storage they use, and achieve a good balance between accuracy and efficiency. Their orders of convergence for the solution of the semi-discrete incompressible Navier-Stokes system will be validated in our numerical experiments presented later.

\section{Solution of the internal Stage Navier-Stokes Equations} \label{section4}

The objective of this section is to discuss the solution algorithm of the discretized incompressible Navier-Stokes equations at every Runge-Kutta internal stage. For generosity, we consider the original implementation of the developed Runge-Kutta method, i.e., (\ref{RKformulation}), rather than its low storage formulation, i.e., (\ref{lowStorage1}) or (\ref{lowStorage2}), for a particular type of DIRK schemes. Thus, an $s$-stage stiff-accurate DIRK scheme for the time-marching of the semi-discrete incompressible Navier-Stokes system (\ref{system}) from $t_{n}$ to $t_{n+1}$ can be specified as follows:

\begin{subequations} \label{subndse}
\begin{gather}
U_{ni} = u_n + \textstyle\sum_{j,k=1}^{i-1} a_{ij} \omega_{jk}(U_{nk} - u_n) + h a_{ii} F(t_{ni},U_{ni},\bar{U}_{ni},P_{ni}), \label{subndses1}\\
\bar{U}_{ni} = \bar{u}_n + \textstyle\sum_{j,k=1}^{i-1} a_{ij} \omega_{jk}(\bar{U}_{nk} - \bar{u}_n) + h a_{ii} \bar{F}(t_{ni},U_{ni},\bar{U}_{ni},P_{ni}), \label{subndses2}\\
D\bar{U}_{ni} = r(t_n) + h\textstyle\sum_{j=1}^s a_{ij}\big( r'(t_{ni}) + \theta_{i} \big), \label{subndses3}
\end{gather}
\end{subequations}

\noindent where $i=1(1)s$, $u_{n+1} = U_{ns}$, $\bar{u}_{n+1} = \bar{U}_{ns}$, $p_{n+1} = P_{ns}$ and $\theta_{i}$ is given by (\ref{stiffaccurateRKConstraint}). $U_{ni}$, $\bar{U}_{ni}$ and $P_{ni}$ are the cell-centered, face-centered velocity fields and the cell-centered pressure field at the time instant $t_{ni}$, respectively. Note that the discretized momentum equations given by (\ref{subndse}) are easy to implement on the basis of the existing finite volume codes since they resemble the discretized momentum equation, e.g. (\ref{Euler}), given by an implicit multi-step method. The only difference is that the terms on the RHS of (\ref{subndses1}) and (\ref{subndses2}), which represent the contribution of last velocities to the next ones, include not only the velocities at the previous time-step but also the velocities at the previous Runge-Kutta internal stages.

The Runge-Kutta internal stage incompressible Navier-Stokes equations given by (\ref{subndse}) are non-linear and should be solved using a fixed-point iteration method. The Newton-Raphson method can be a good choice for its quadratic convergence. However, the complexity of a Newton's solver and the computational effort it requires are the major obstacles in practice especially for the incompressible Navier-Stokes system considered in this study. If the Newton iterations are terminated before the exact solutions are reached in a Newton's method, residual errors will arise in the internal stage continuity equation (\ref{subndses3}) and consequently the numerical solutions will fail to satisfy the algebraic constraint. In the meantime, a Picard linearization method for non-linearity in combination with a splitting method for pressure-velocity coupling is a more appealing solution algorithm. This is because, on one hand, this algorithm enforces the equality (\ref{subndses3}) even if the Picard iterations are terminated before the exact solutions are reached. On the other hand, it is much easier to implement than a Newton-type solver considering the resemblance between (\ref{subndse}) and the discretized equations given by an implicit multi-step method. Details of this approach are given as follows: to start with, we write $\bar{r}_{ni}$ for the RHS of (\ref{subndse3}), and insert (\ref{interpolation1}) into (\ref{subndses1}) and (\ref{subndses2}) which leads to

\begin{subequations} \label{subndse2}
\begin{gather}
U_{ni} = \big\{A_{ni}^*\big\} \big(u_{ni}+ha_{ii}H_{ni}-ha_{ii}GP_{ni}\big), \label{subndse2s1}  \\
\bar{U}_{ni} = \big\{\bar{A}_{ni}^*\big\} \big(\bar{u}_{ni}+ha_{ii}\bar{H}_{ni}-ha_{ii}\bar{G}P_{ni}\big), \label{subndse2s2} \\
\intertext{with}
u_{ni} = u_n + \textstyle\sum_{j,k=1}^{i-1} a_{ij} \omega_{jk}\big(U_{nk} - u_n\big), \quad \bar{u}_{ni} = \bar{u}_n + \textstyle\sum_{j,k=1}^{i-1} a_{ij} \omega_{jk}\big(\bar{U}_{nk} - \bar{u}_n\big), \\
A_{ni}^* = \big(I - ha_{ii}\text{diag}\big(A_{ni}\big)\big)^{-1}, \quad \bar{A}_{ni}^* = \big(I - ha_{ii}\text{diag}\big(\bar{A}_{ni}\big)\big)^{-1},
\end{gather}
\end{subequations}

\noindent where $A_{ni}=A(\bar{U}_{ni})$ and $H_{ni} = H(t_{ni},U_{ni},\bar{U}_{ni})$ (see (\ref{Anotation}) for the definitions of the $A$ and $H$ symbols). $\bar{A}_{ni}$ and $\bar{H}_{ni}$ are computed by the GFISDM scheme (\ref{daem}) or (\ref{modifieddaem}) using the values of $A_{ni}$ and $H_{ni}$. Multiplying $D$ to the both side of (\ref{subndse2s2}) and using the relation (\ref{subndses3}) yields

\begin{equation}\label{subndse3}
ha_{ii}D\big\{\bar{A}_{ni}^*\big\}\bar{G} P_{ni} =  D\big\{\bar{A}_{ni}^*\big\}\big(\bar{u}_{ni}+ha_{ii}\bar{H}_{ni}\big)-\bar{r}_{ni}.
\end{equation}

\noindent The equations of (\ref{subndse2s1}) and (\ref{subndse3}) can be written together in a compact form as

\begin{equation} \label{PISOalgorithm}
\left(
\begin{matrix}
I & ha_{ii}\big\{A_{ni}^*\big\}G \\
0 &  ha_{ii}D\big\{\bar{A}_{ni}^*\big\}\bar{G}  \\
\end{matrix}
\right)
\left(
\begin{matrix}
U_{ni} \\
P_{ni}
\end{matrix}
\right)
=
\left(
\begin{matrix}
\big\{A_{ni}^*\big\}\big(u_{ni}+ha_{ii}H_{ni}\big) \\
D\big\{\bar{A}_{ni}^*\big\}\big(\bar{u}_{ni}+ha_{ii}\bar{H}_{ni}\big)-\bar{r}_{ni}
\end{matrix}
\right).
\end{equation}

\noindent Note that the non-linearity of (\ref{subndse2}) or (\ref{PISOalgorithm}) results from that fact that the $A_{ni}$ term (or more specifically the matrix of coefficients resulting from the convection term) is a function of $\bar{U}_{ni}$. In a Picard linearization procedure for (\ref{PISOalgorithm}), this term is computed with a good approximation of $\bar{U}_{ni}$. This approximation can be the cell-face velocity field $\bar{u}$ obtained in the previous Picard iteration loop or the previous Runge-Kutta internal stage. As a result, (\ref{PISOalgorithm}) becomes a system of linear equations regarding the discretized velocities and pressures after the Picard linearization. 

Now, if the linearized Navier-Stokes system is to be solved directly using methods such the Gauss elimination (which is computational demanding), we still need move the $U_{ni}$-dependent components in the $H_{ni}$ and $\bar{H}_{ni}$ terms of (\ref{PISOalgorithm}) to its left hand side before the solution. However, if we start with computing the RHS of (\ref{PISOalgorithm}) with an existing internal stage cell-centered velocity field $U_{ni}$, (\ref{PISOalgorithm}) can then be solved in a staggered manner in which the pressure equation (\ref{subndse3}) is solved first and cell-center velocities are updated with (\ref{subndse2s1}) afterwards. Subsequently, by iterating the related steps until the results converge, we will acquire the exact solution of the linearized system of (\ref{PISOalgorithm}). This method is known as the PISO algorithm \citep{issa1986} in literature. The implementation of the PISO algorithm requires an initial guess for cell-center velocities, in order to trigger the iteration process, which can be obtained by solving (\ref{subndse2s1}). Considering the internal stage pressure field $P_{ni}$ is not available at the beginning of the first Picard iteration loop of the current internal stage, it is approximated with the information of the pressure at the previous internal stage or time-step. Then, in the following Picard iterations, it takes the value of the pressure obtained in the last Picard loop. Finally, after a series of Picard and PISO iterations, the obtained results are used to approximate the exact solution of the system (\ref{subndse}). In practice, either the Picard or the PISO iteration is usually terminated before the exact solutions are reached. As suggested by Theorem \ref{theorem1} and Lemma \ref{lemma3}, if the residual errors are constrained within certain magnitudes, the order of convergence for the numerical results will not be affected.

\section{Taylor-Green Vortex} \label{section5}

The Taylor-Green vortex is an unsteady flow of a decaying vortex, which has an exact closed form solution of the incompressible Navier-Stokes equations on a two dimensional Cartesian grid system:

\begin{equation} \label{taylorGreen}
\begin{split}
u_{(1)}(x_1,x_2,t) &= -\cos x_1 \sin x_2 e^{-2\nu t}, \\
u_{(2)}(x_1,x_2,t) &= \sin x_1 \cos x_2 e^{-2\nu t}, \\
p(x_1,x_2,t) &= -0.25(\cos 2x_1 + \sin 2x_2)  e^{-4\nu t}.
\end{split}
\end{equation}

\noindent It has been widely used as a benchmark problem for evaluating the accuracy of numerical methods for simulating incompressible flows. In this study, we simulate the Taylor-Green vortex numerically to examine the spatial accuracy of the proposed momentum interpolation framework, and the temporal accuracy of the developed Runge-Kutta method. Supplied with the initial conditions given by (\ref{taylorGreen}), we compute the solutions of (\ref{system}) on a square domain $[0,2\pi]\times[0,2\pi]$ with either unsteady Dirichlet or periodic boundary conditions for velocities. When velocities use periodic boundaries, the boundary condition we employ for pressures is periodic as well.  For velocities using unsteady Dirichlet boundaries, we employ the zero-gradient condition for pressures since we have $\nabla p = 0$ on the entire boundaries for the computational domain considered in this study. The domain on which we discretize the incompressible Navier-Stokes equations also ensures the elements of the source term in the discretized continuity equation (\ref{fdce}) are non-zero when the velocity field employs the unsteady Dirichlet boundary condition. 

To demonstrate the performance of the proposed methods in different numerical setup, we consider two kinematic viscosity, i.e., $\nu = 1$ and $\nu = 0.1$, and use the central differencing scheme and the deferred correction formulation of the FROMM scheme for the spatial discretization of convection terms, respectively. Four cases are included in the numerical study: case \rom{1} and case \rom{2} consider the periodic boundary situation using the central differencing scheme and the FROMM scheme, respectively; case \rom{3} and case \rom{4} consider the unsteady Dirichlet boundary situation using the central differencing scheme and the FROMM scheme, respectively. For the solution of the discretized Navier-Stokes equations at Runge-Kutta internal stages, we use the solution algorithm introduced in Section \ref{section4}; and employ the Gauss-Seidel method and the conjugate gradient method for the solutions of the linearized momentum equation and the pressure correction equation at each PISO iteration, respectively. Also note that we use logarithmic scales all error plots demonstrated later, so that the curves appear as straight lines of slope $k$ whenever the corresponding errors are convergent of order $k$.

\subsection{Verification of spatial accuracy}

The verification of spatial accuracy concentrates on the spatial errors of the finite volume discretization at two stages, before and after the temporal discretization. In the first stage of the verification, we examine the spatial errors of the semi-discrete incompressible Navier-Stokes equations based on GFISDM schemes. To this end, we compute the pressure at $t=0$ by solving (\ref{NSconsistency}) with $u$ and $\bar{u}$ taking the values from (\ref{taylorGreen}). Then, we insert the resulting pressure into the first two relations of (\ref{system}) to get the time derivatives of velocities at cell centroids and face centroids. Numerical results are computed for different uniform grids ranging from $16\times16$ to $1024\times1024$ control volumes, and are then compared with the exact solution of the pressure and the time derivative of the velocity:

\begin{equation}
\begin{split}
u'_{(1)}(x_1,x_2,t) &= 2\nu\cos x_1 \sin x_2 e^{-2\nu t}, \\
u'_{(2)}(x_1,x_2,t)& = -2\nu\sin x_1 \cos x_2 e^{-2\nu t}.
\end{split}
\end{equation}

\noindent We demonstrate the spatial errors of the semi-discrete incompressible Navier-Stokes systems based on GFISDM-Z and GFISDM-H respectively in Figure \ref{figure0}. The results indicate that the errors in $u'$, $\bar{u}'$ and $p$ for both GFISDM-Z and GFISDM-H show the second order convergence. In the second stage of the verification, we examine the spatial errors of $u$, $\bar{u}$ and $p$ after the temporal discretization of the semi-discrete incompressible Navier-Stokes equations. For this purpose, we simulate the Taylor-Green vortex from within the time interval $[0,0.1/ \nu]$ using a total of $512$ time-steps. We use the third-order accurate SDIRK3 with sufficient numbers of iterations for the Picard linearization and the PISO algorithm so that the temporal errors are much smaller than the spatial errors. The convergence results shown in Figure \ref{figure2} demonstrate the second order convergence of the spatial errors for GFISDM-Z and GFISDM-H in all numerical setup. 

\begin{figure}[htb]
\centering
    \begin{subfigure}[b]{0.33\linewidth}
        \includegraphics[width=0.9\linewidth]{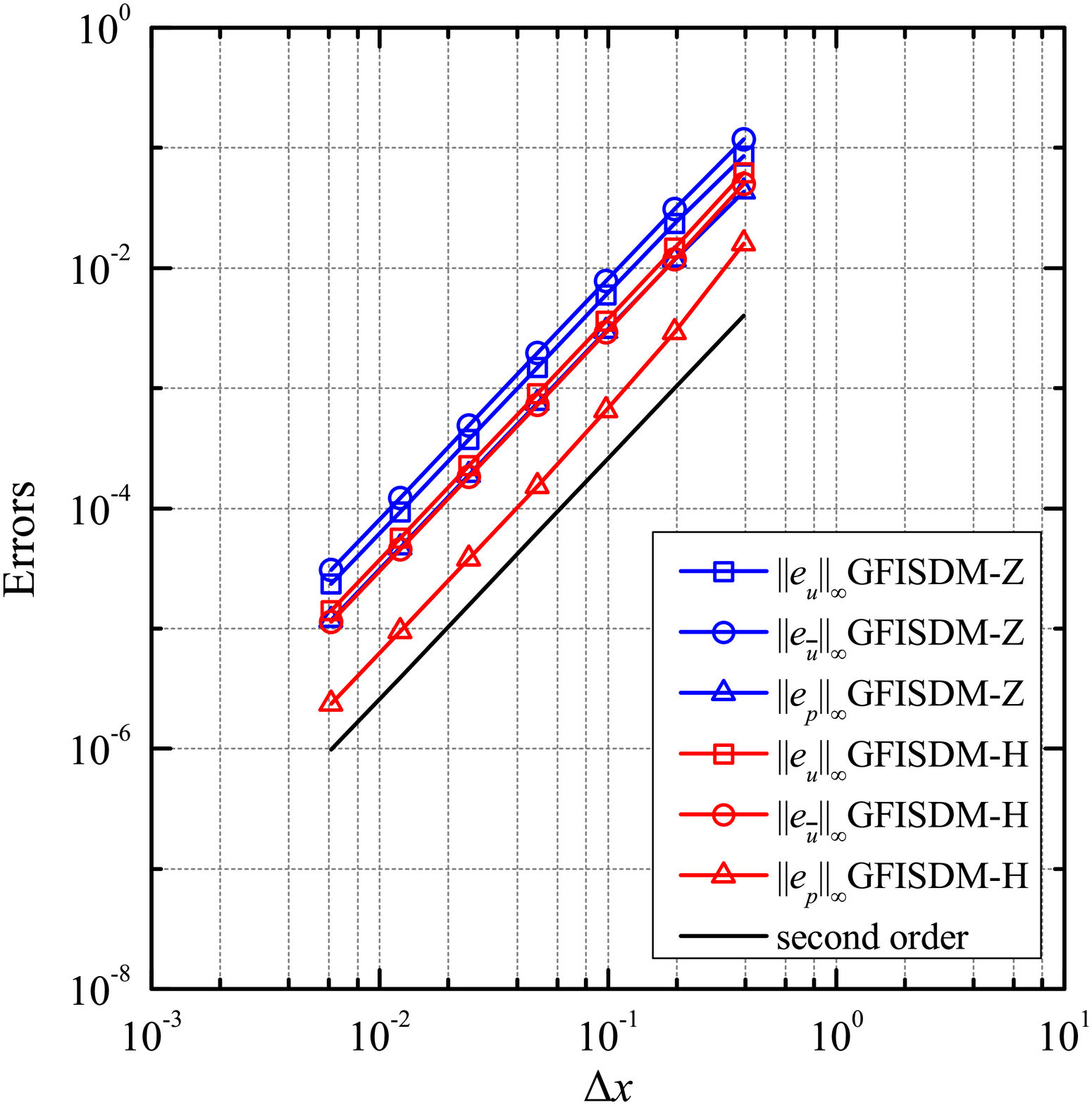}
        \caption{case \rom{1}}
     \end{subfigure}
    \begin{subfigure}[b]{0.33\linewidth}
        \includegraphics[width=0.9\linewidth]{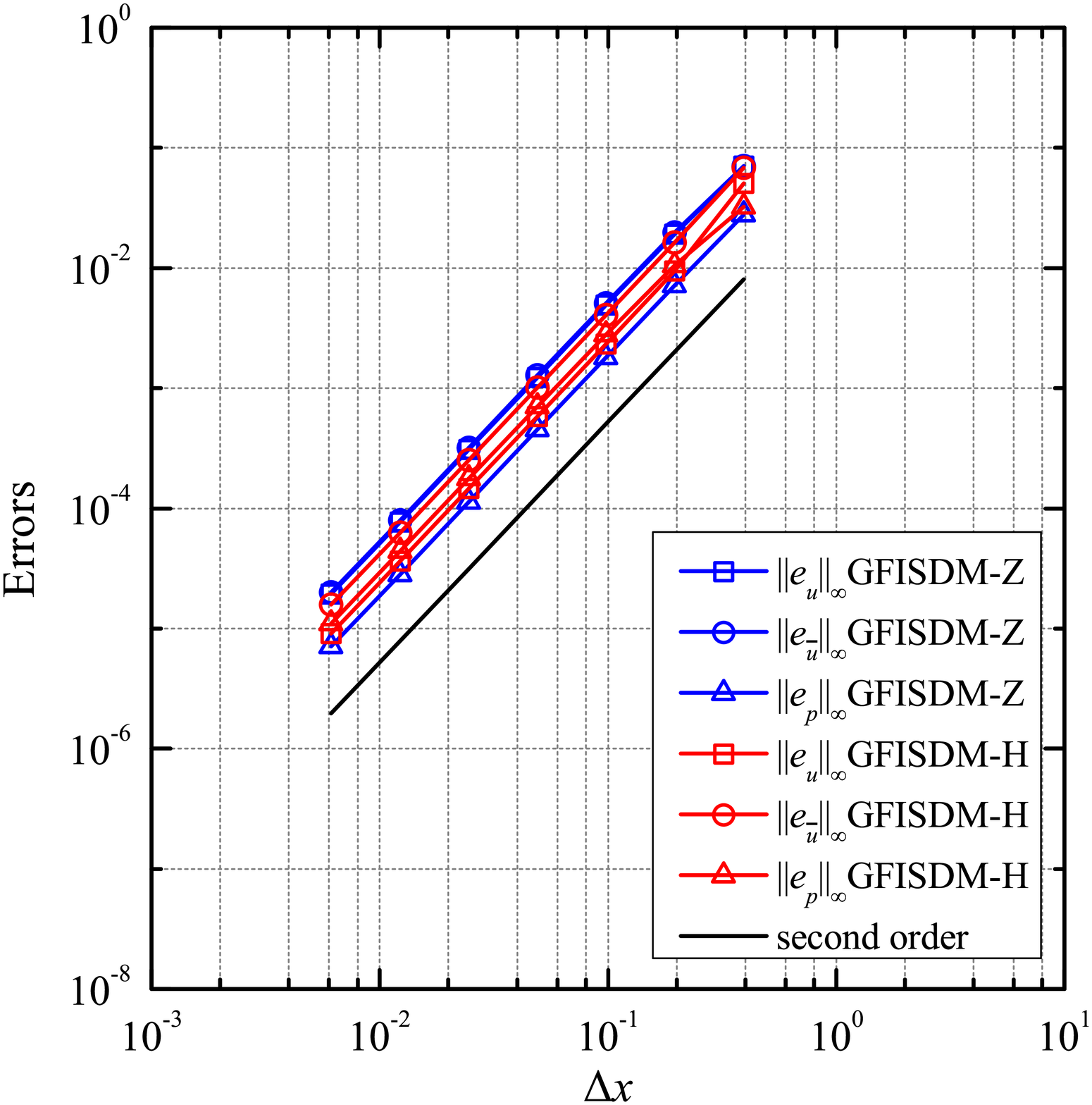}
        \caption{case \rom{2}}
    \end{subfigure}
     \begin{subfigure}[b]{0.33\linewidth}
        \includegraphics[width=0.9\linewidth]{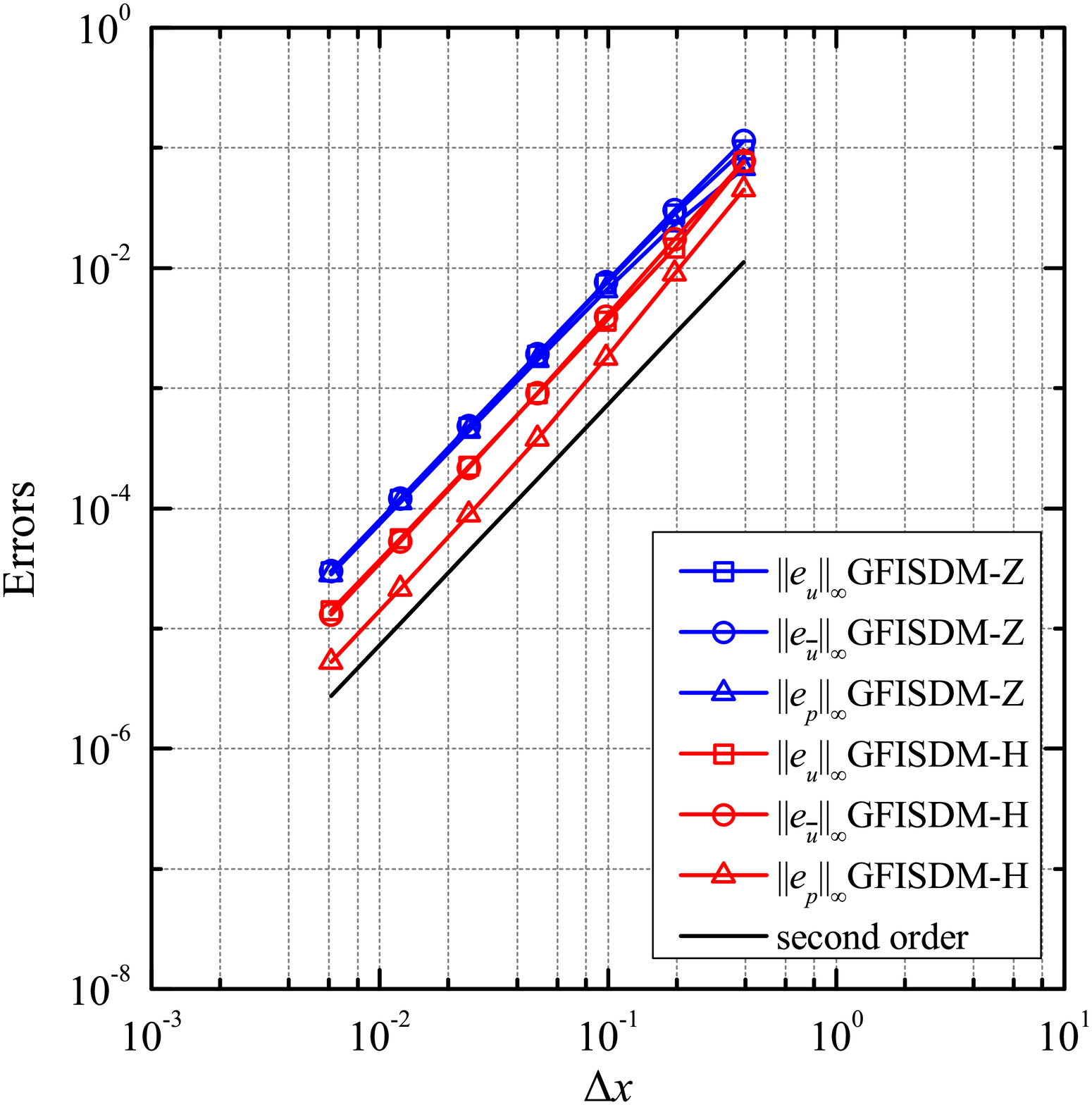}
        \caption{case \rom{3}}
    \end{subfigure}
    \begin{subfigure}[b]{0.33\linewidth}
        \includegraphics[width=0.9\linewidth]{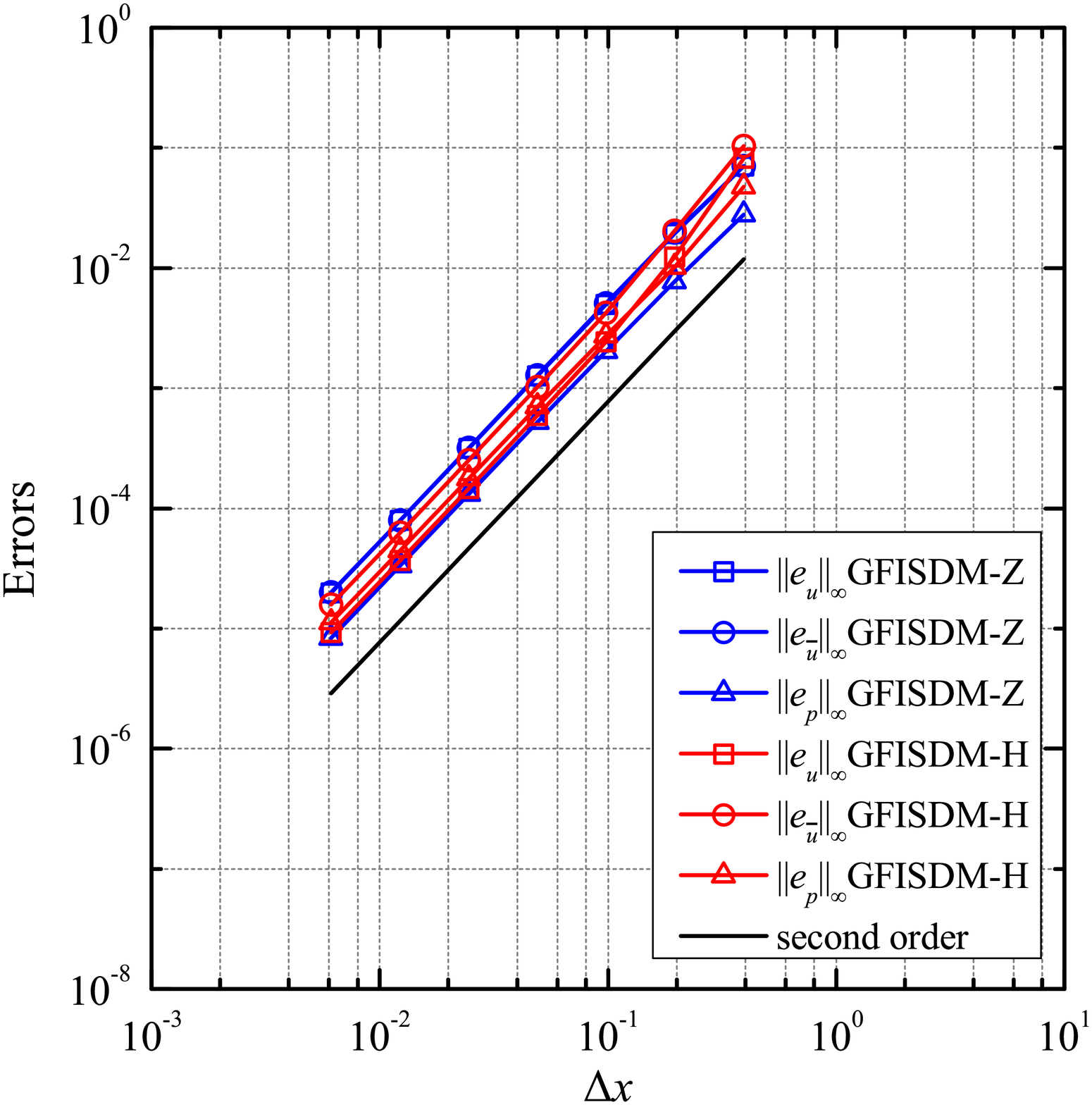}
        \caption{case \rom{4}}
    \end{subfigure}
      \caption{Convergence of the spatial discretization errors of the semi-discrete Navier-Stokes system for Taylor-Green Vortex at $t = 0$} \label{figure0}
\end{figure}

\begin{figure}[htb]
\centering
    \begin{subfigure}[b]{0.35\linewidth}
        \includegraphics[width=0.9\linewidth]{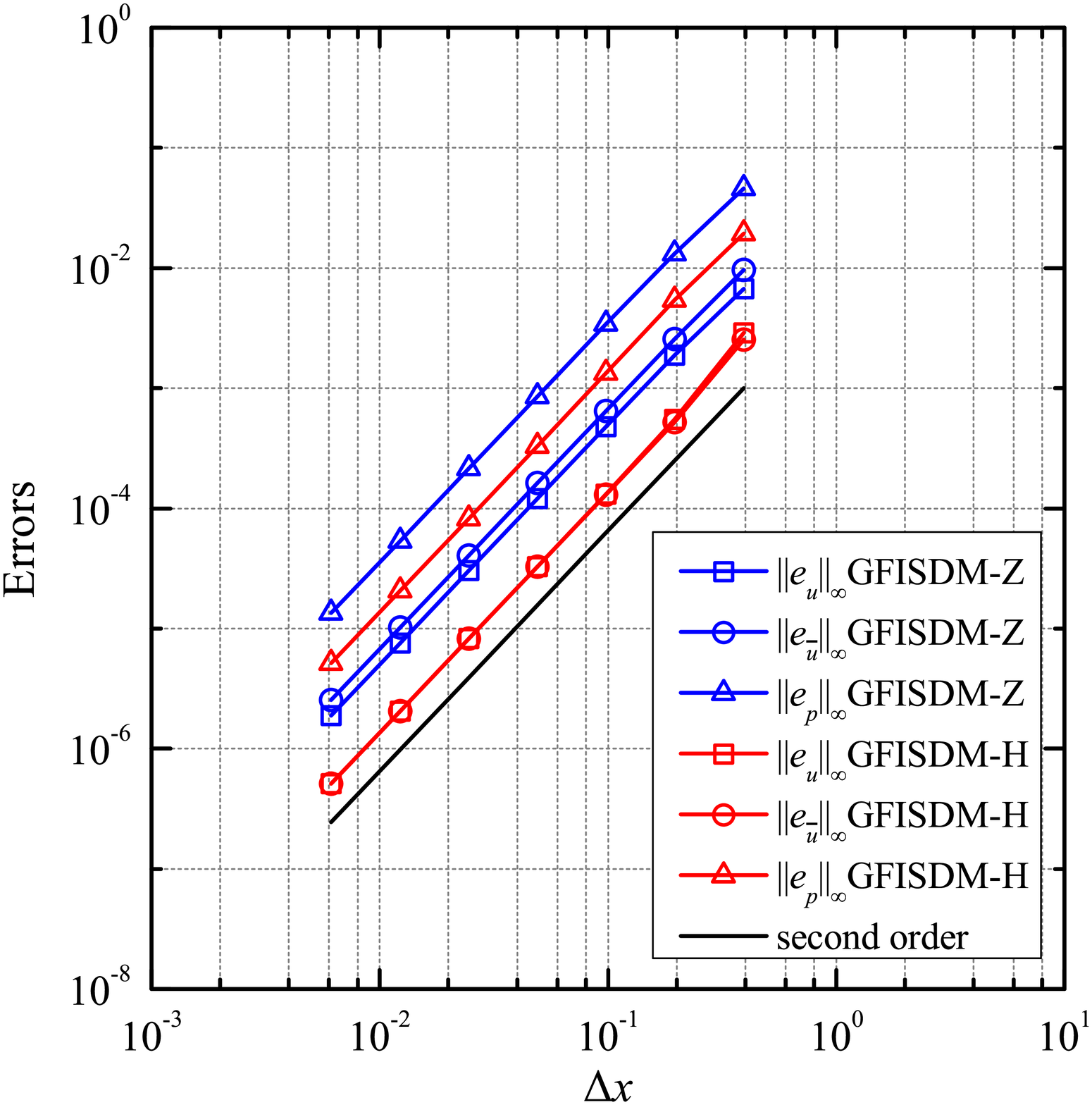}
        \caption{case \rom{1}}
     \end{subfigure}
    \begin{subfigure}[b]{0.35\linewidth}
        \includegraphics[width=0.9\linewidth]{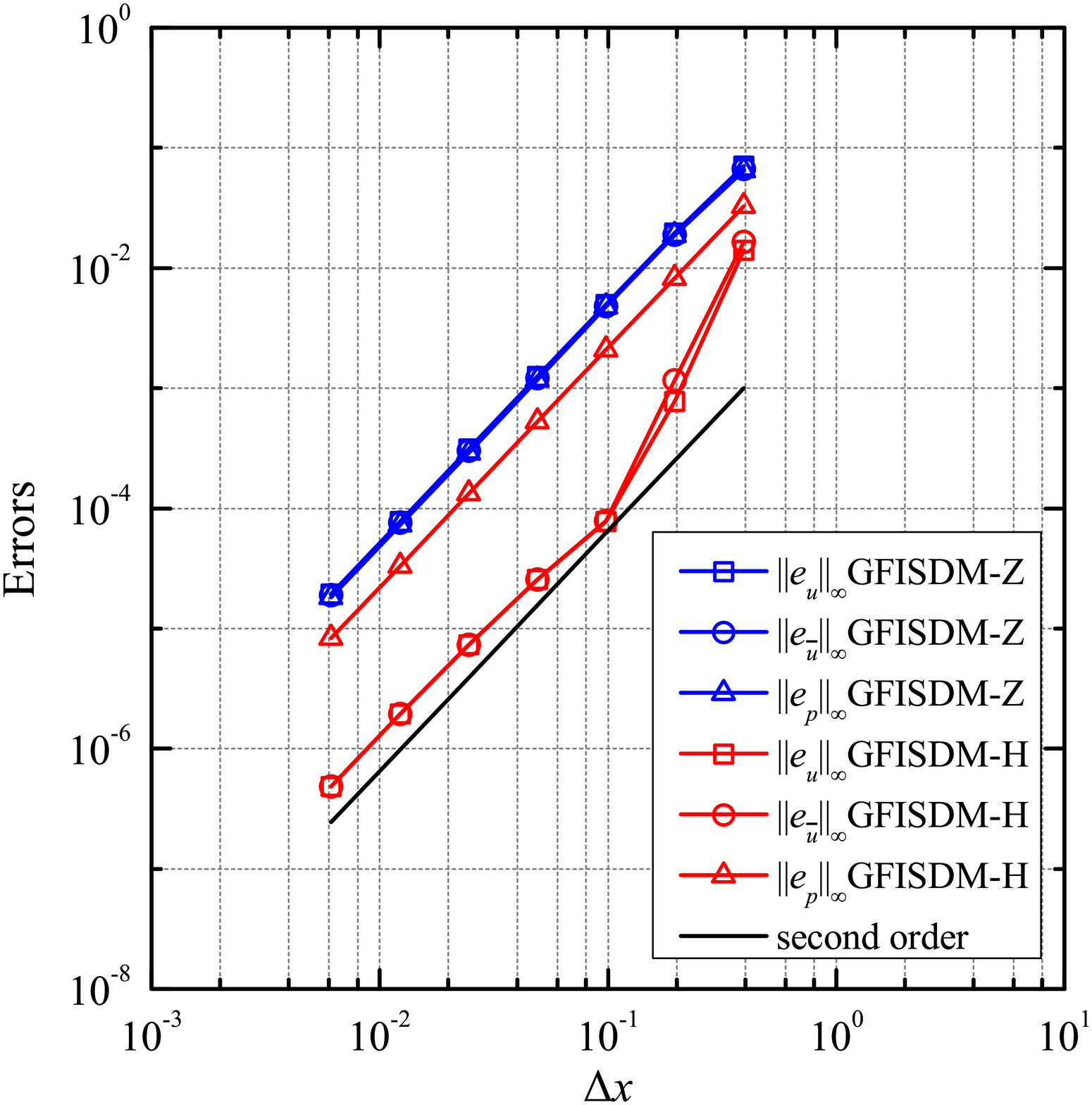}
        \caption{case \rom{2}}
    \end{subfigure}
     \begin{subfigure}[b]{0.35\linewidth}
        \includegraphics[width=0.9\linewidth]{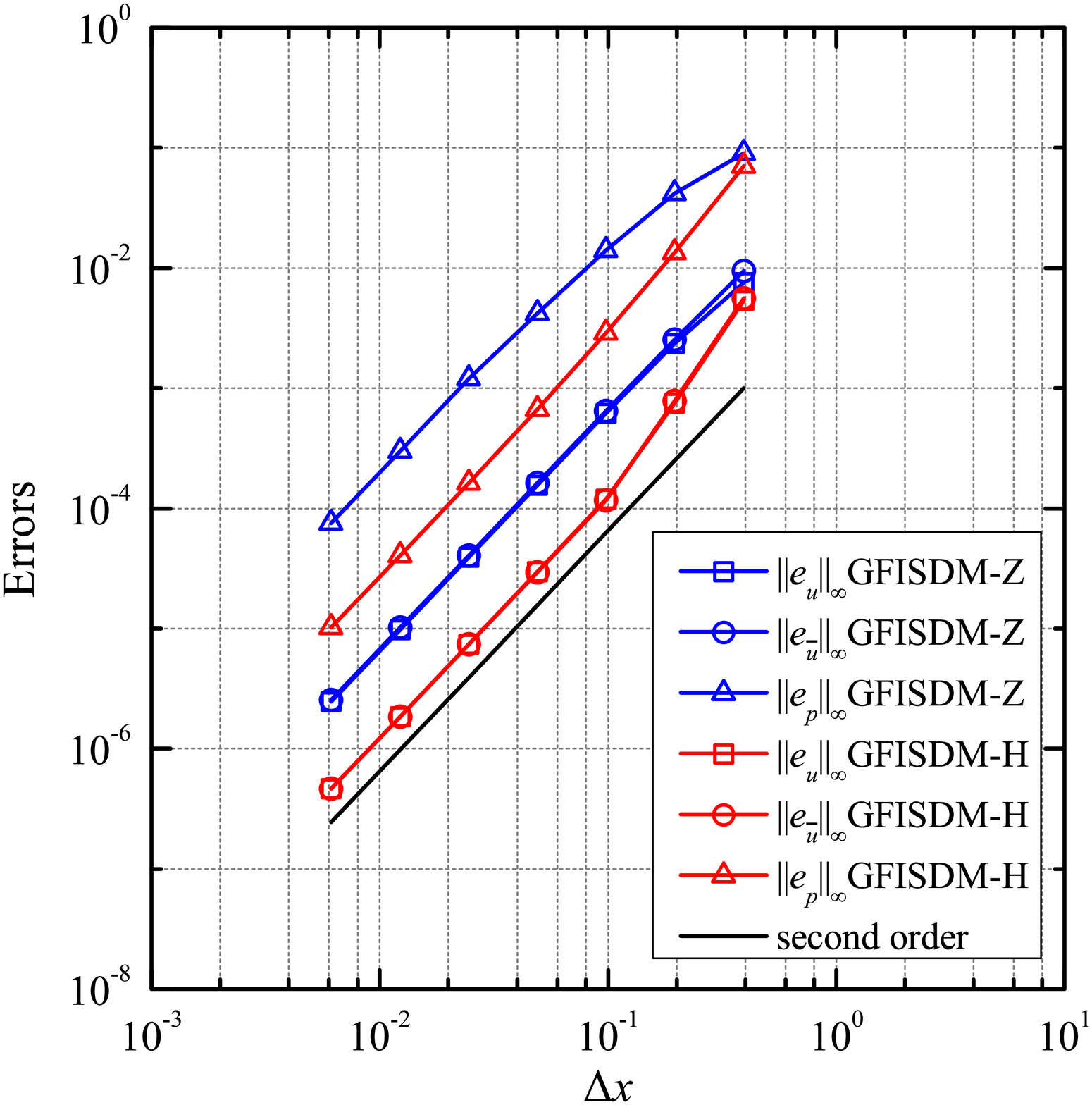}
        \caption{case \rom{3}}
    \end{subfigure}
    \begin{subfigure}[b]{0.35\linewidth}
        \includegraphics[width=0.9\linewidth]{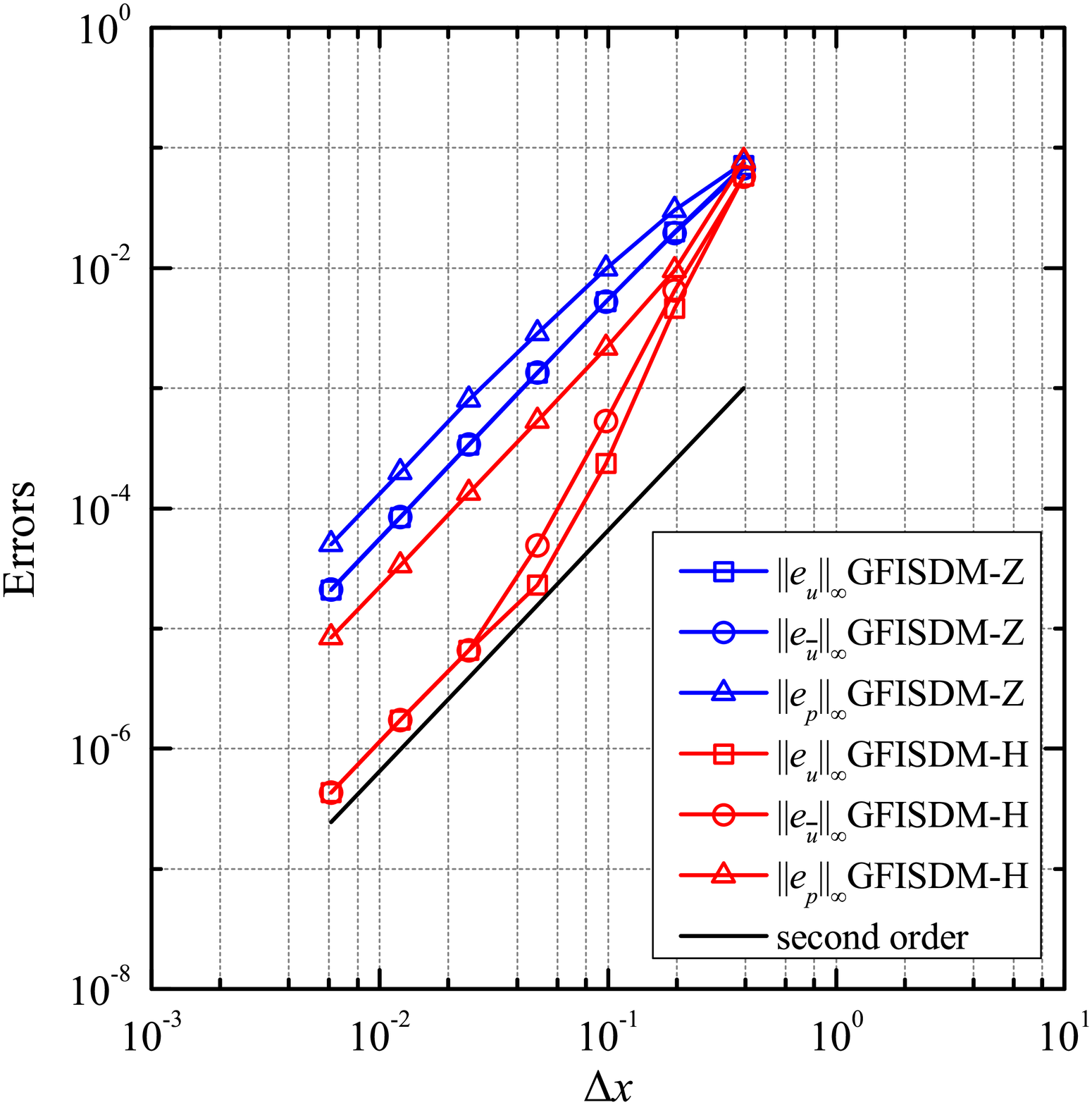}
        \caption{case \rom{4}}
    \end{subfigure}
      \caption{Convergence of the spatial errors for Taylor-Green Vortex at $t = 0.1/ \nu$} \label{figure2}
\end{figure}

\subsection{Verification of temporal accuracy}

To investigate the temporal accuracy of the proposed method, we simulate the Taylor-Green vortex on a uniform grid with $16\times16$ control volume and vary the total time-steps  from $8$ to $256$ steps. For simplicity, we only consider GFISDM-H for momentum interpolations. In all of the tests, we compute the temporal solutions of the semi-discrete incompressible Navier-Stokes system within the time interval $[0,0.1/ \nu]$. In this time duration, the magnitudes of velocities decrease by around 18\%. It is noted that the temporal error, considered here, represents the error in solving the semi-discrete incompressible Navier-Stokes system, i.e., the difference between the numerical and the exact solution of (\ref{system}). Considering the exact solution of (\ref{system}) is not available, it is approximated by the numerical solution from a simulation with SDIRK3 and a sufficiently small time-step size, i.e., $h = 0.1/2048\nu$.

To demonstrate the influence of the number of Picard iterations (denoted by $N_p$) on temporal accuracy, two different numbers of the Picard iteration are considered. For each Picard iteration loop, we use two pressure correction loops for the PISO algorithm. Assuming the computational cost of SDIRK2 and SDIRK3 for a single Runge-Kutta internal stage is close, the computational cost required by SDIRK3 to advance a time-step is approximately $1.5$ times larger than SDIRK2 since SDIRK2 and SDIRK3 are the two-stage and the three-stage schemes, respectively. Therefore, in order to compare the performance of SDIRK2 and SDIRK3 in terms of the temporal accuracy per computational cost, temporal error results are plotted against the time-step size divided by the Runge-Kutta stage numbers. 

First, we discuss case \rom{1} and case \rom{2} both of which employ periodic boundary conditions. The source term of the discrete continuity equation is $0$ in these two cases, and consequently the proposed method (\ref{RKformulation}) is identical to the direct approach. The temporal errors for case \rom{1} are demonstrated in Figure \ref{figure3}. The results indicate the classical order of convergence of $u$, $u'$ and $p$ for both SDIRK schemes with $N_p=$ 2 and 4. By comparing the error results in the plots, it is easy to conclude that SDIRK3 has a considerable advantage over SDIRK2 in terms of the temporal accuracy per computational cost over the entire arrange of time-step sizes considered in the tests. The temporal error results for case \rom{2} are demonstrated in Figure \ref{figure4}. While the velocity and pressure show the classical order of convergence for SDIRK2 with both Picard iteration numbers, the solutions of SDIRK3 attain the classical order of convergence only at the higher number of Picard iterations. The comparison results shown in Figure \ref{figure4} indicate that SDIRK2 and SDIRK3 achieve the very close accuracy per computational cost in temporal solutions when $N_p = 2$, while, SDIRK3 performs much better than SDIRK2 when $N_p = 4$.

\begin{figure}[htb]
\centering
    \begin{subfigure}[b]{0.35\linewidth}
        \includegraphics[width=0.9\linewidth]{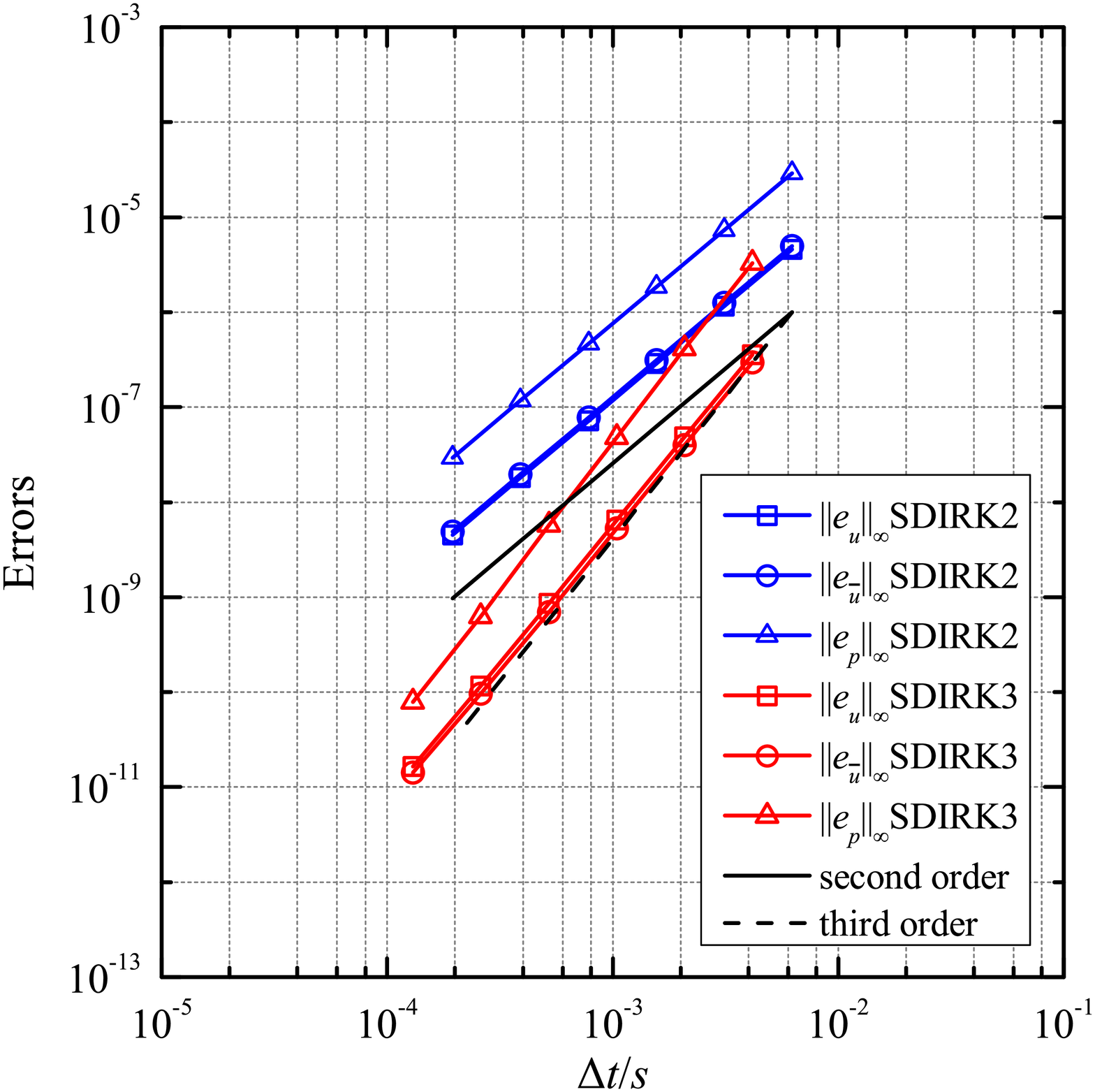}
        \caption{$N_p=2$}
     \end{subfigure}
    \begin{subfigure}[b]{0.35\linewidth}
        \includegraphics[width=0.9\linewidth]{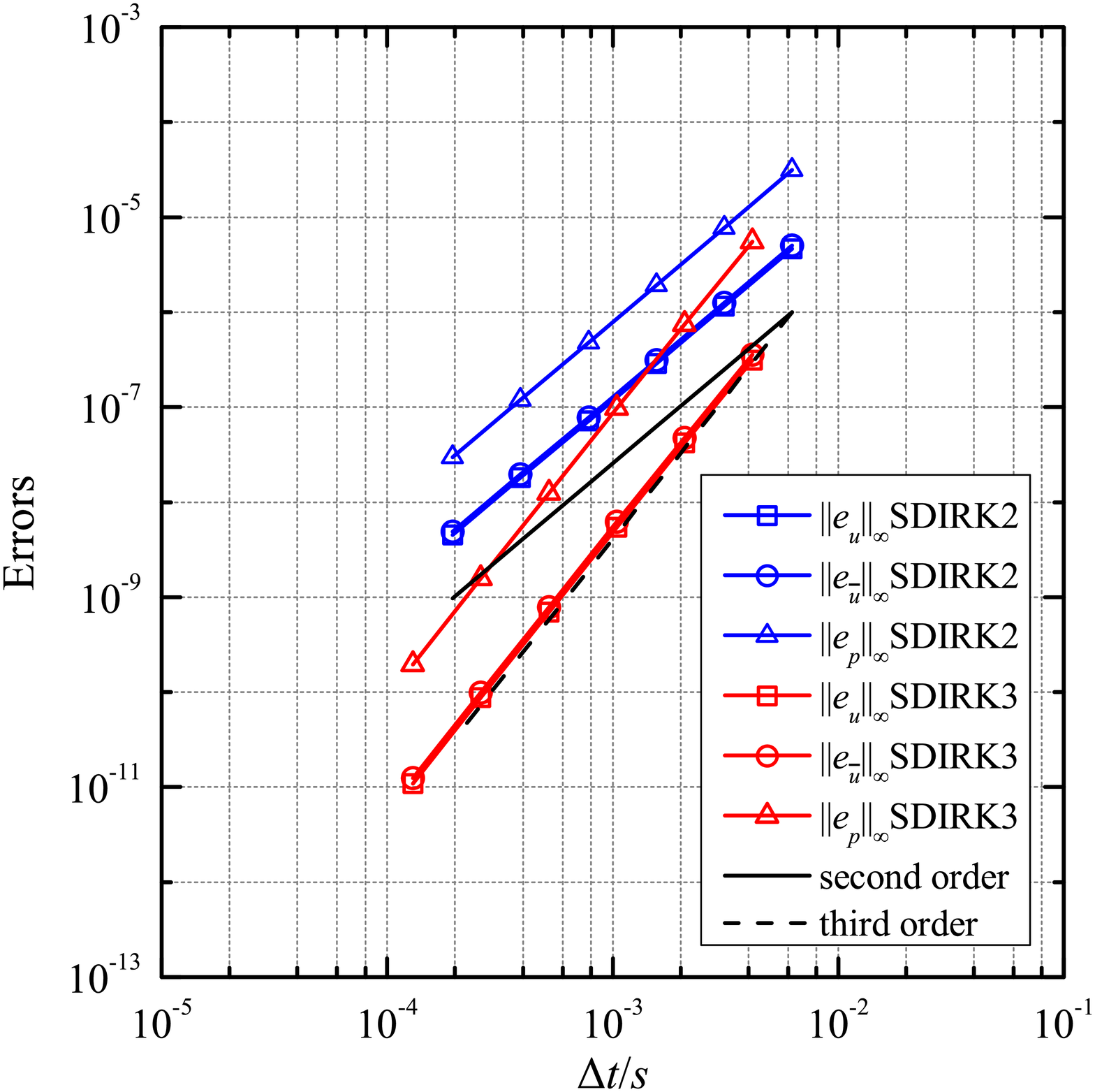}
        \caption{$N_p=4$}
    \end{subfigure}
      \caption{Convergence of the temporal errors for Taylor Green Vortex in case \rom{1} at $t = 0.1/ \nu$} \label{figure3}
\end{figure}

\begin{figure}[htb]
\centering
    \begin{subfigure}[b]{0.35\linewidth}
        \includegraphics[width=0.9\linewidth]{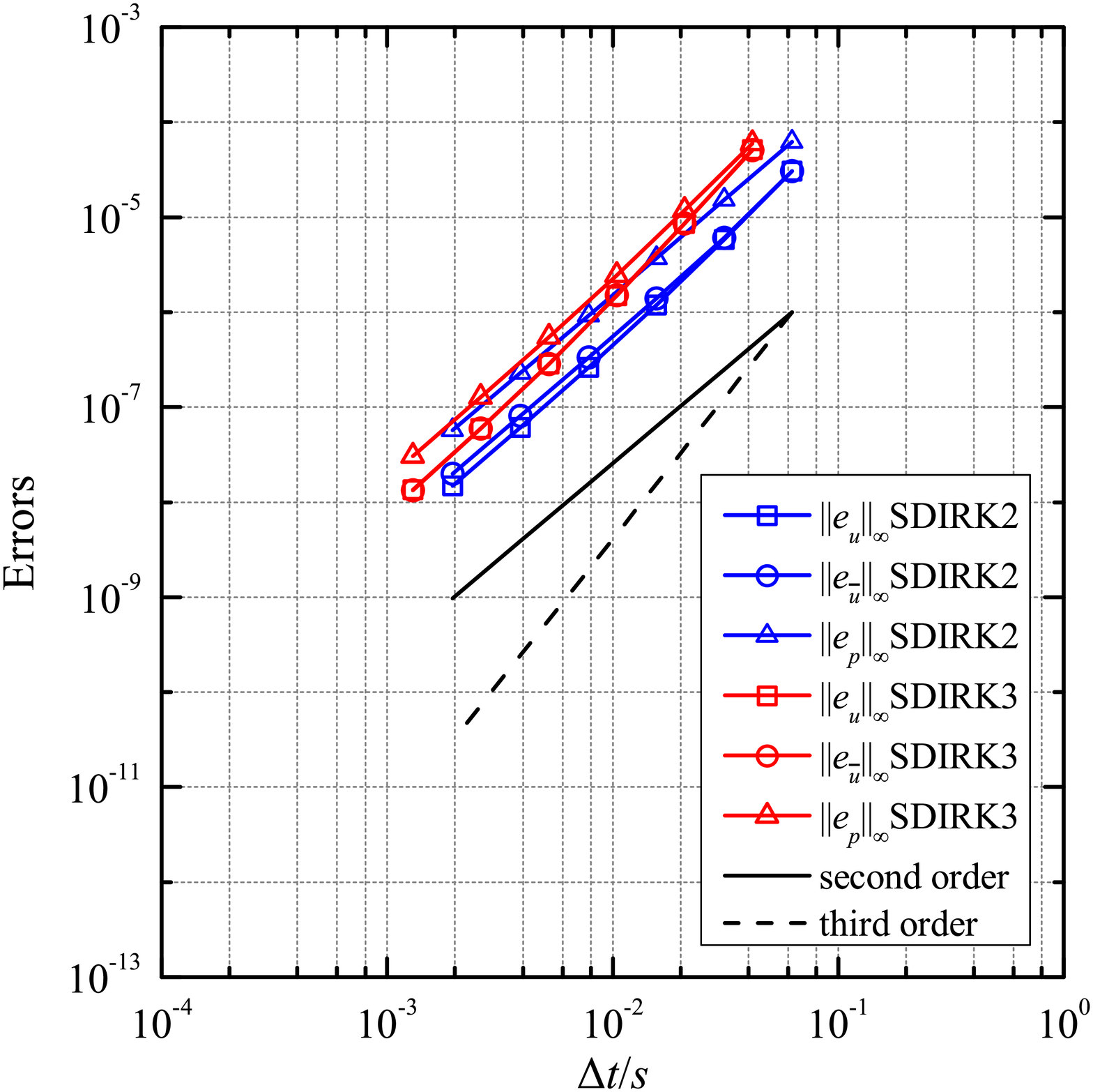}
        \caption{$N_p=2$}
     \end{subfigure}
    \begin{subfigure}[b]{0.35\linewidth}
        \includegraphics[width=0.9\linewidth]{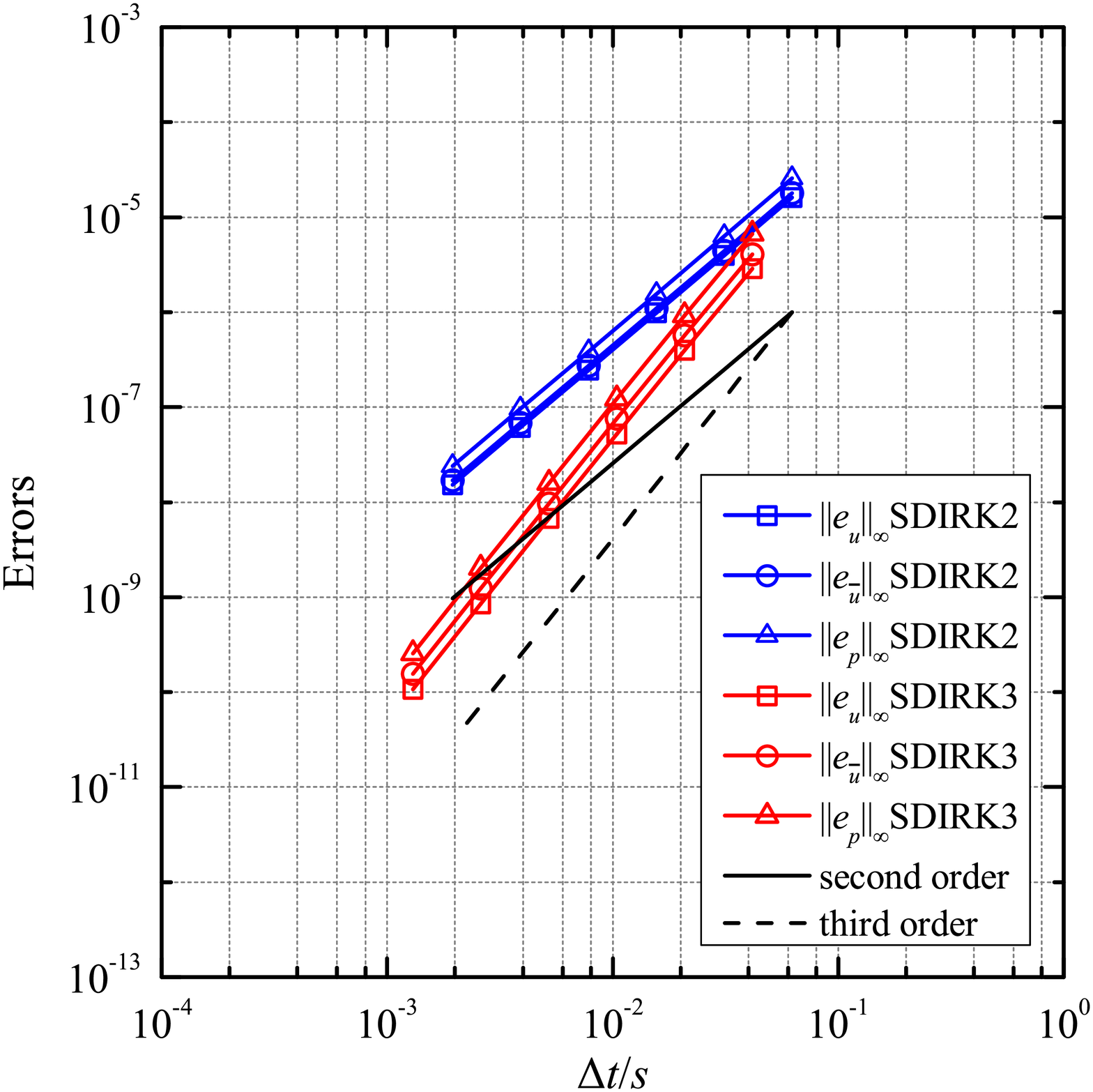}
        \caption{$N_p=4$}
    \end{subfigure}
      \caption{Convergence of the temporal errors for Taylor Green Vortex in case \rom{2} at $t = 0.1/ \nu$} \label{figure4}
\end{figure}

Then, let us focus on case \rom{3} and case \rom{4}. Both cases consider unsteady Dirichlet boundary conditions for velocities and zero-gradient boundary conditions for pressures. The source term of the discrete continuity equation is a vector of non-zero time-varying elements in those two cases, and consequently the proposed method is different from the direct approach. The temporal errors for case \rom{2} and case \rom{4} are shown in Figure \ref{figure5} and Figure \ref{figure6} respectively. The results indicate that SDIRK2 achieves its classical order (i.e., second-order) convergence for the velocity and pressure when $N_p = 2$, while SDIRK3 delivers the classical order (i.e., third-order) convergence until $N_p = 4$. This means SDIRK3, as the higher-order scheme, requires more Picard iterations to reach its expected order of convergence. For case \rom{3}, SDIRK3 has the higher accuracy per computational cost than SDIRK2 with $N_p=$ 2 or 4. However, for case \rom{4}, SDIRK2 and SDIRK3 are well matched in the accuracy per computational cost when $N_p = 2$, but SDIRK3 reclaims the accuracy advantage over SDIRK2 when $N_p = 4$. To examine the superiority of the proposed method in the term of convergence results, we demonstrate the temporal errors of the direct approach in Figure \ref{figure7} in which the velocity attains the classical order of convergence, while, the pressure is only first-order accurate for both SDIRK schemes. 

\begin{figure}[htb]
\centering
    \begin{subfigure}[b]{0.35\linewidth}
        \includegraphics[width=0.9\linewidth]{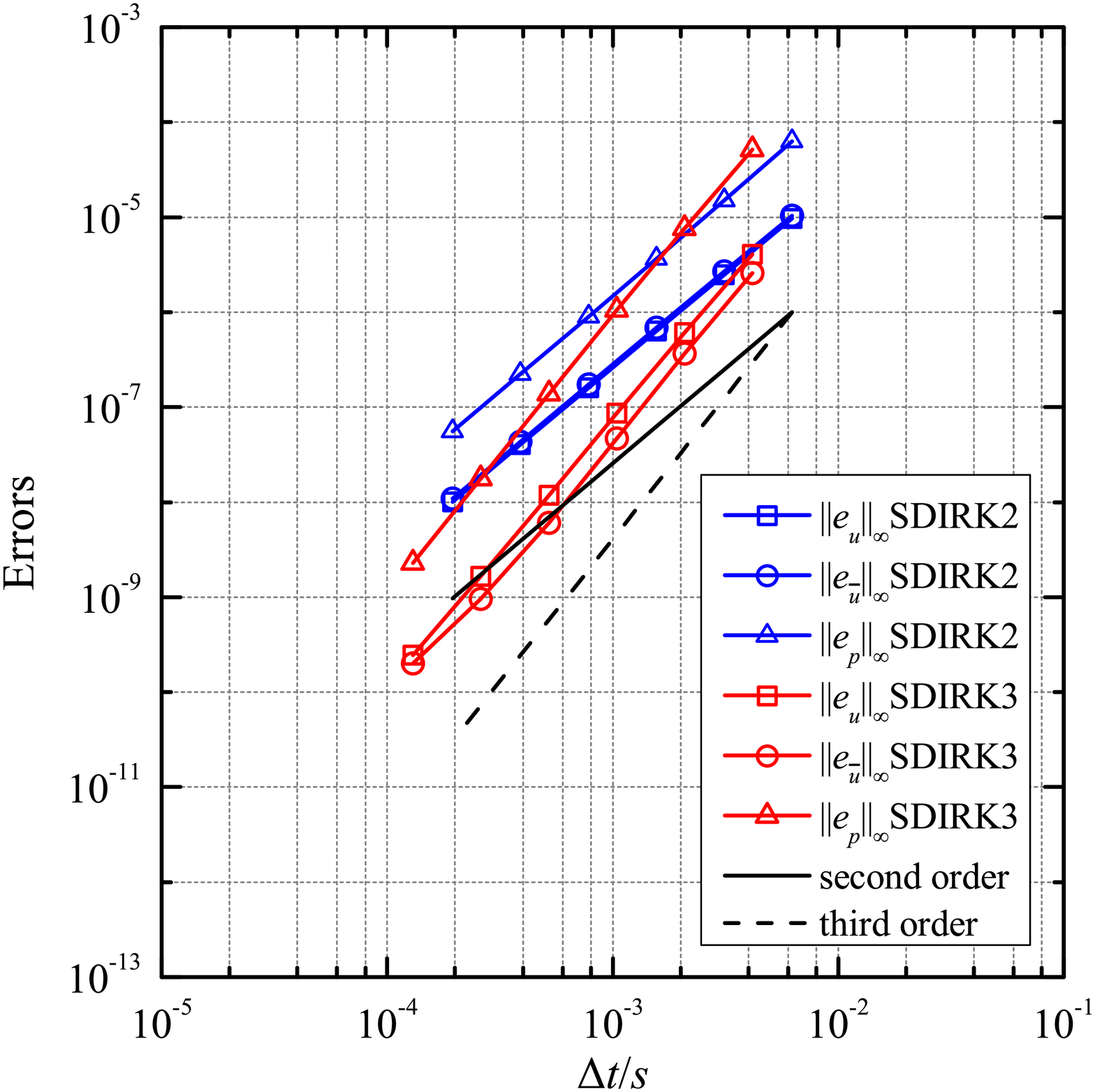}
        \caption{$N_p=2$}
     \end{subfigure}
    \begin{subfigure}[b]{0.35\linewidth}
        \includegraphics[width=0.9\linewidth]{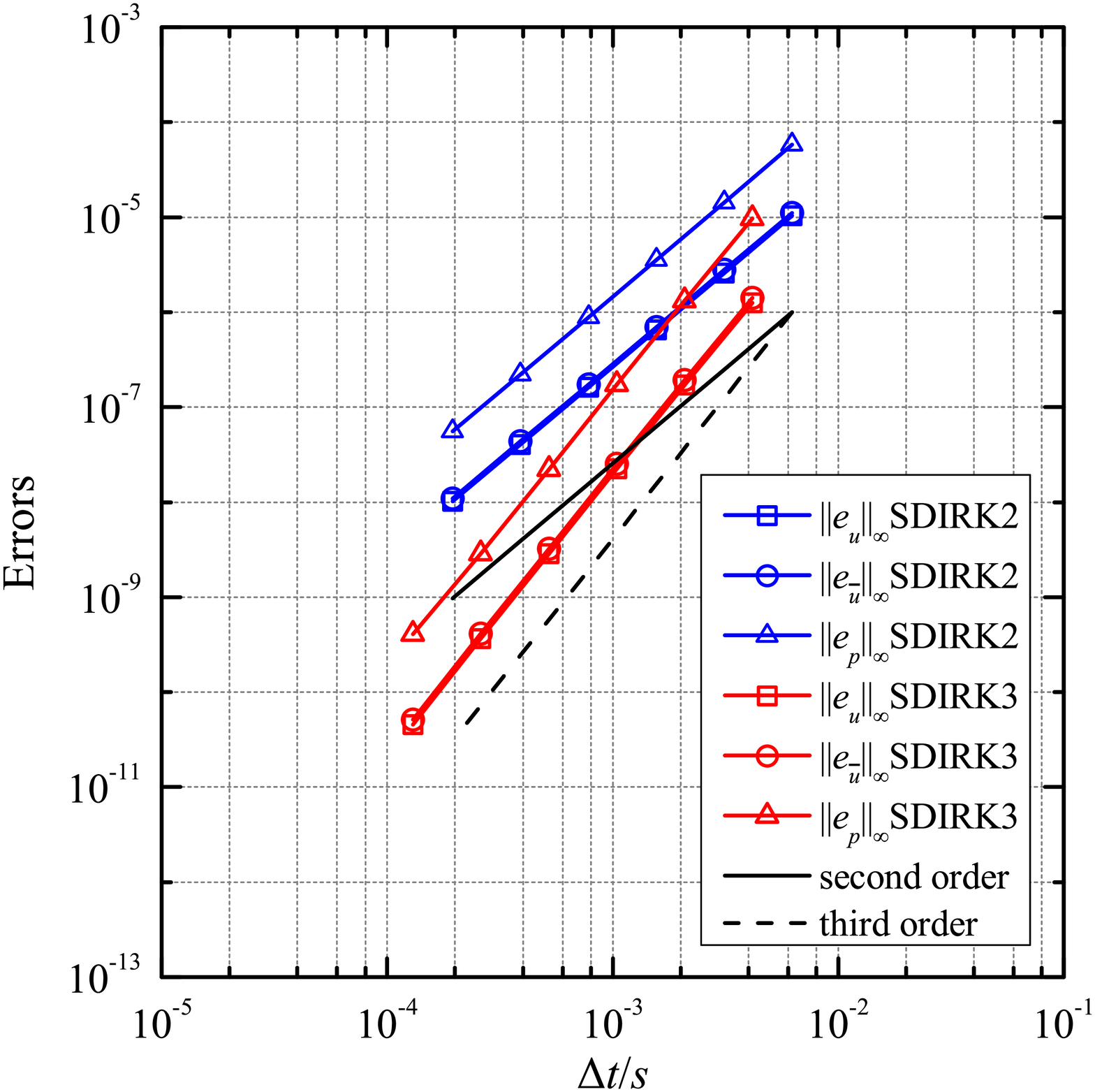}
        \caption{$N_p=4$}
    \end{subfigure}
      \caption{Convergence of the temporal errors for Taylor Green Vortex in case \rom{3} at $t = 0.1/ \nu$} \label{figure5}
\end{figure}

\begin{figure}[htb]
\centering
    \begin{subfigure}[b]{0.35\linewidth}
        \includegraphics[width=0.9\linewidth]{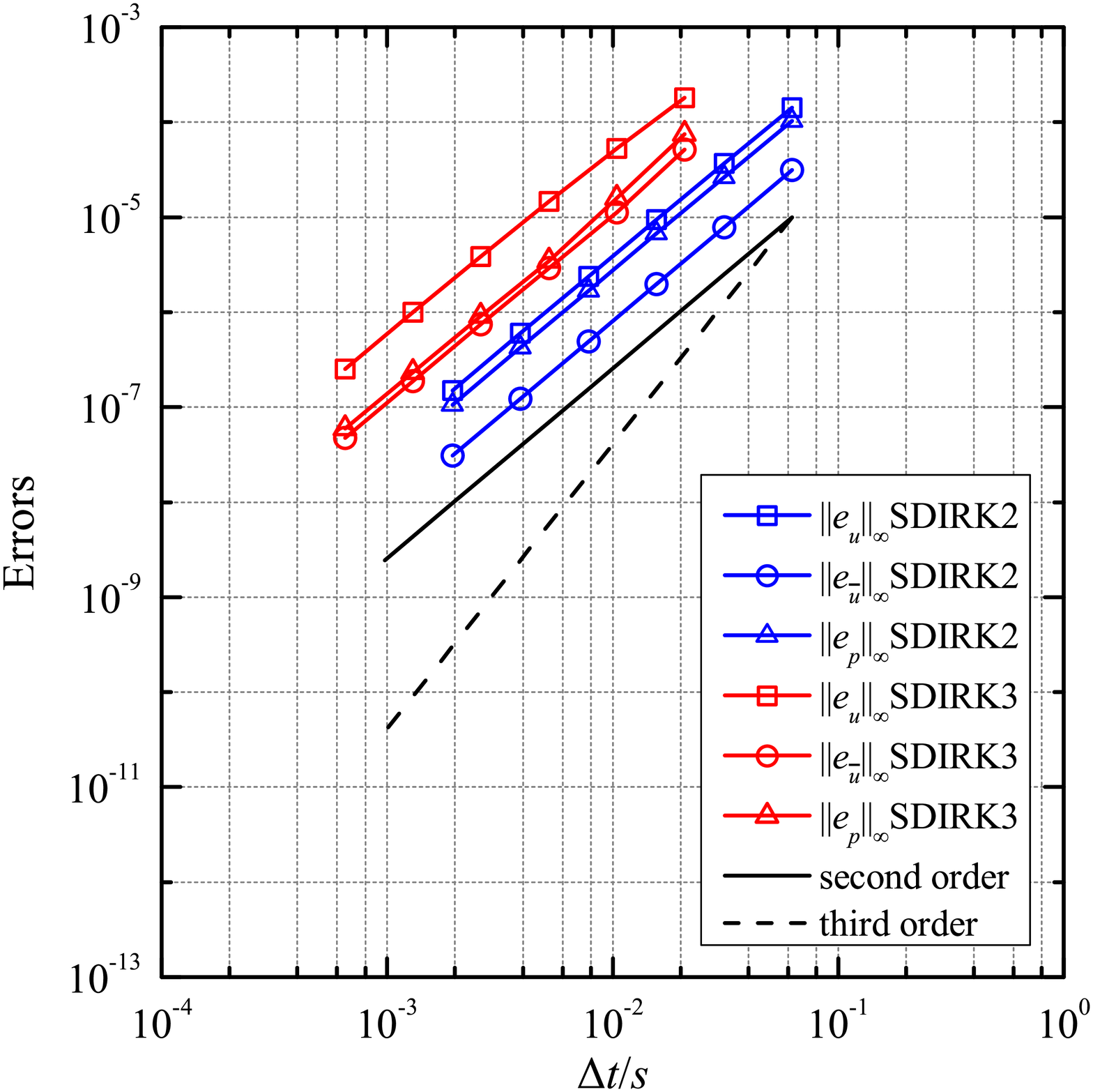}
        \caption{$N_p=2$}
     \end{subfigure}
    \begin{subfigure}[b]{0.35\linewidth}
        \includegraphics[width=0.9\linewidth]{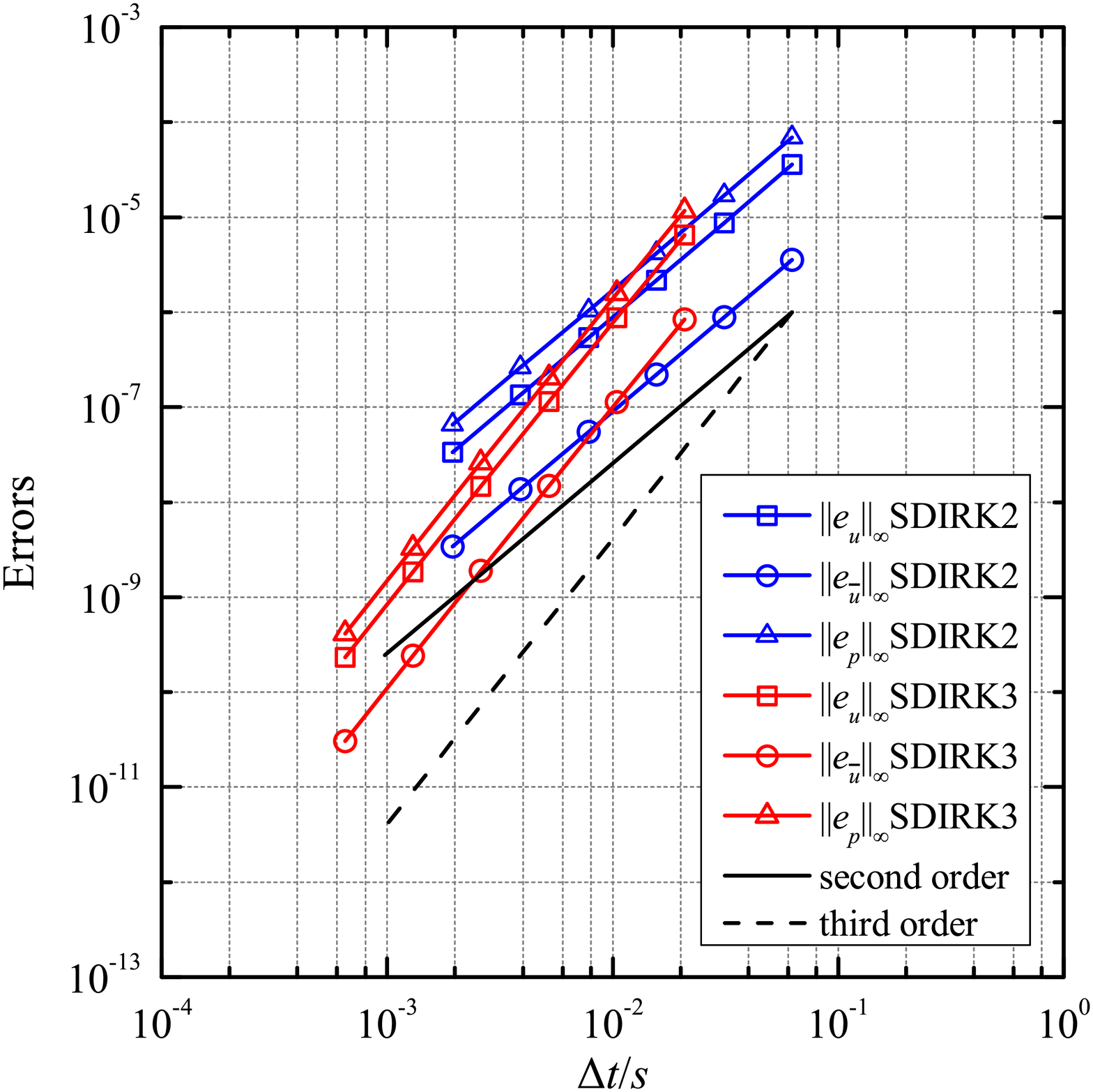}
        \caption{$N_p=4$}
    \end{subfigure}
      \caption{Convergence of the temporal errors for Taylor Green Vortex in case \rom{4} at $t = 0.1/ \nu$} \label{figure6}
\end{figure}

\begin{figure}[htb]
\centering
    \begin{subfigure}[b]{0.35\linewidth}
        \includegraphics[width=0.9\linewidth]{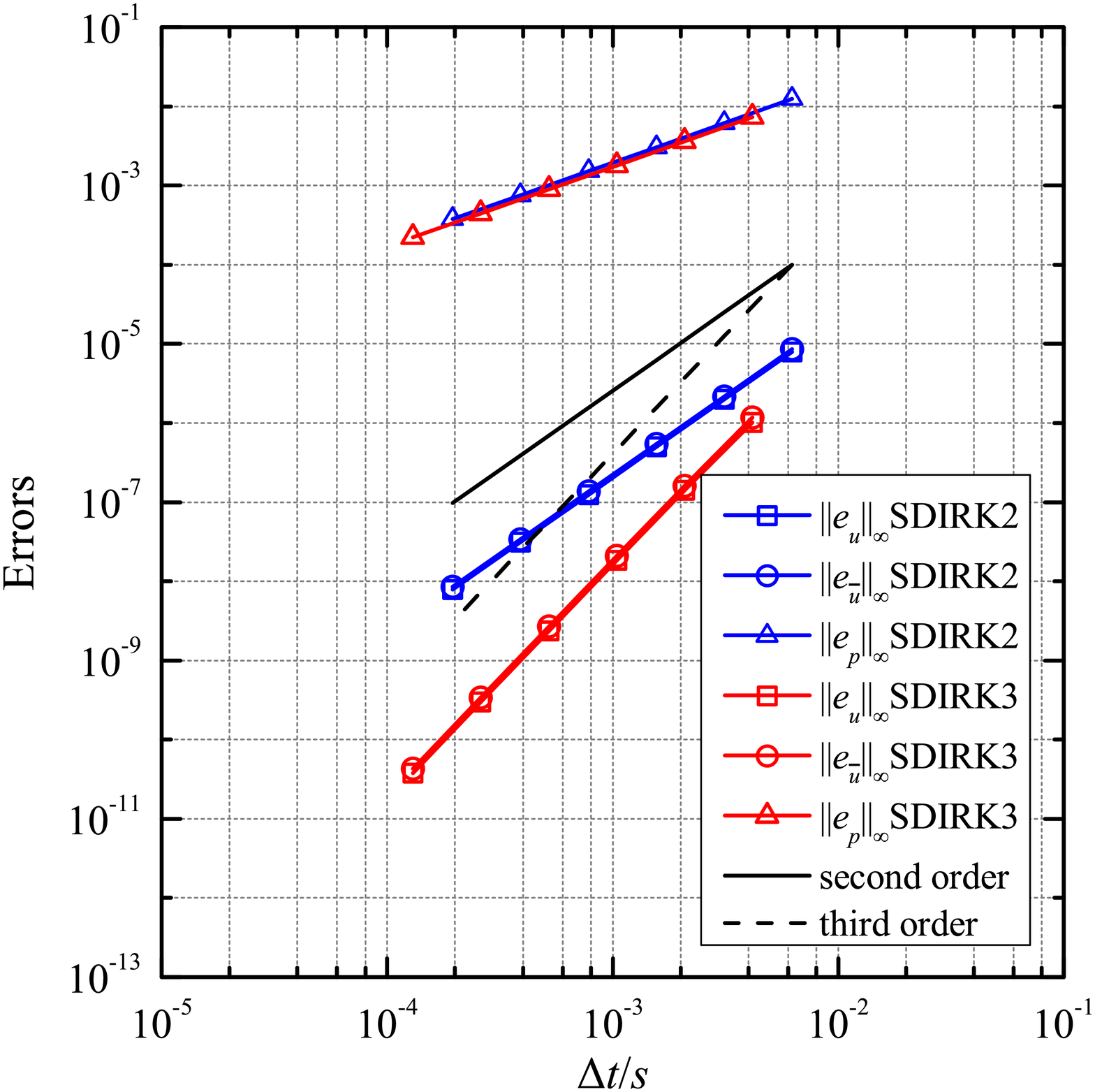}
        \caption{case \rom{3}}
     \end{subfigure}
    \begin{subfigure}[b]{0.35\linewidth}
        \includegraphics[width=0.9\linewidth]{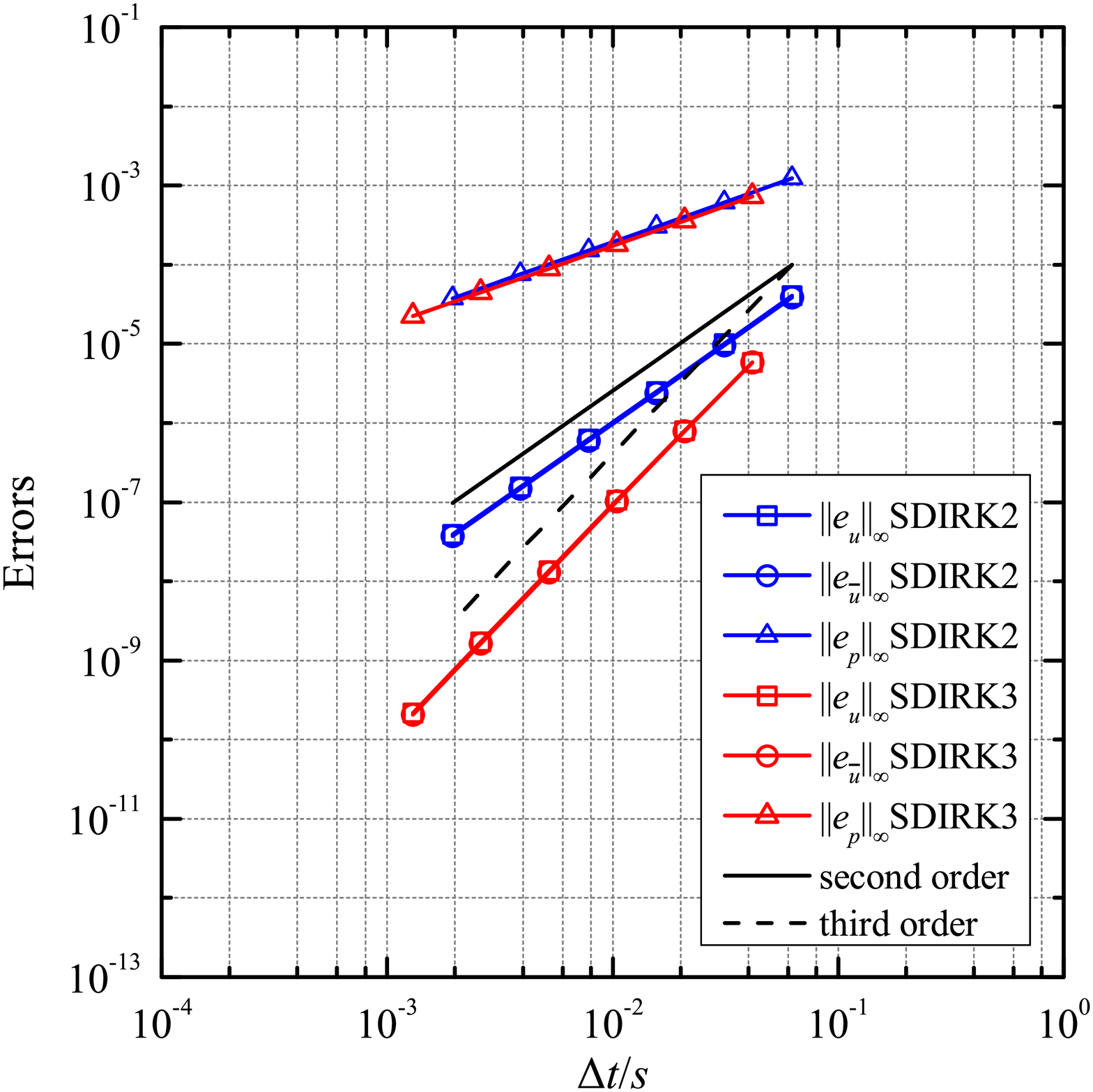}
        \caption{case \rom{4}}
    \end{subfigure}
      \caption{Convergence of the temporal errors when SDIRK schemes are implemented with the direct approach and $N_p =4$} \label{figure7}
\end{figure}

It should be emphasized that all temporal error results demonstrated in this subsection are obtained by solving the semi-discrete incompressible Navier-Stokes systems based on GFISDM-H. In addition, we have a statement presented in Section \ref{section2.3}, related to the order reduction of temporal errors for Choi's momentum interpolation method, need to be validated. Therefore, we re-simulate case \rom{2}$\sim$\rom{4}, in which the diagonal elements of the matrices $K$ and $N$,  using GFISDM-Choi for the evaluation of the time derivative of cell-face velocities. The error results shown in Figure \ref{figure8} demonstrate the first-order convergence of the temporal errors which validates the statement.

\begin{figure}[htb]
\centering
    \begin{subfigure}[b]{0.27\linewidth}
        \includegraphics[width=0.9\linewidth]{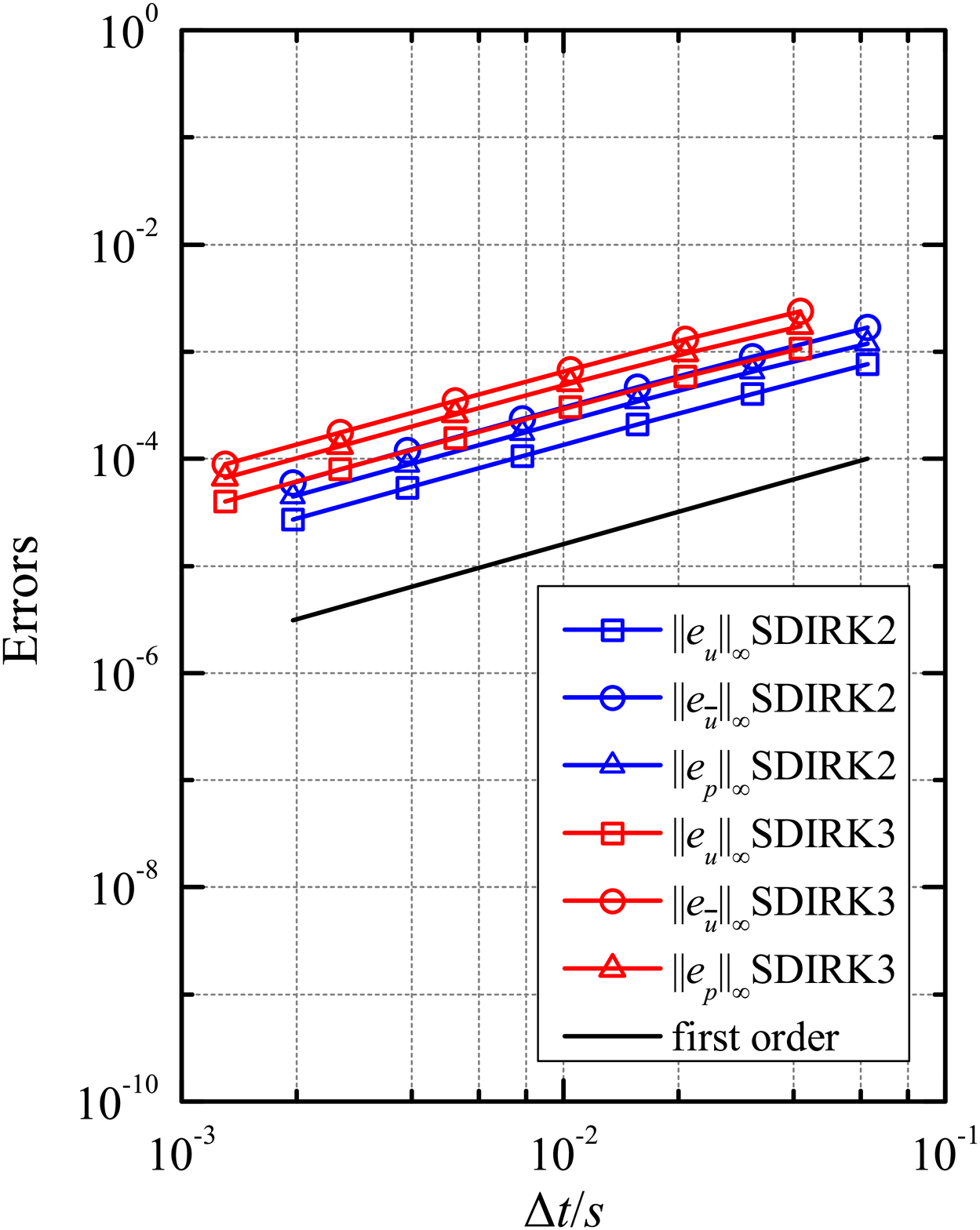}
        \caption{case \rom{2}}
     \end{subfigure}
    \begin{subfigure}[b]{0.27\linewidth}
        \includegraphics[width=0.9\linewidth]{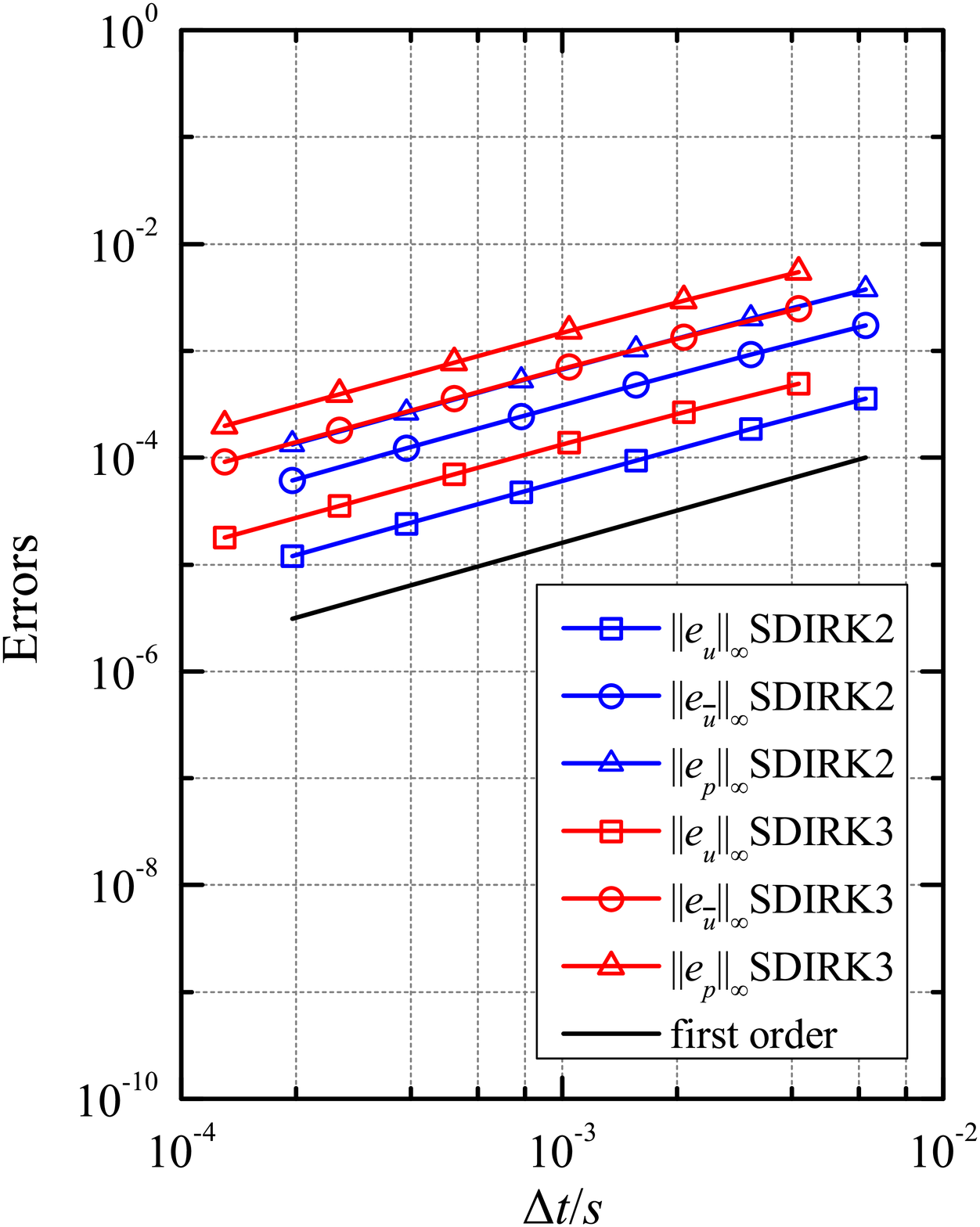}
        \caption{case \rom{3}}
    \end{subfigure}
    \begin{subfigure}[b]{0.27\linewidth}
        \includegraphics[width=0.9\linewidth]{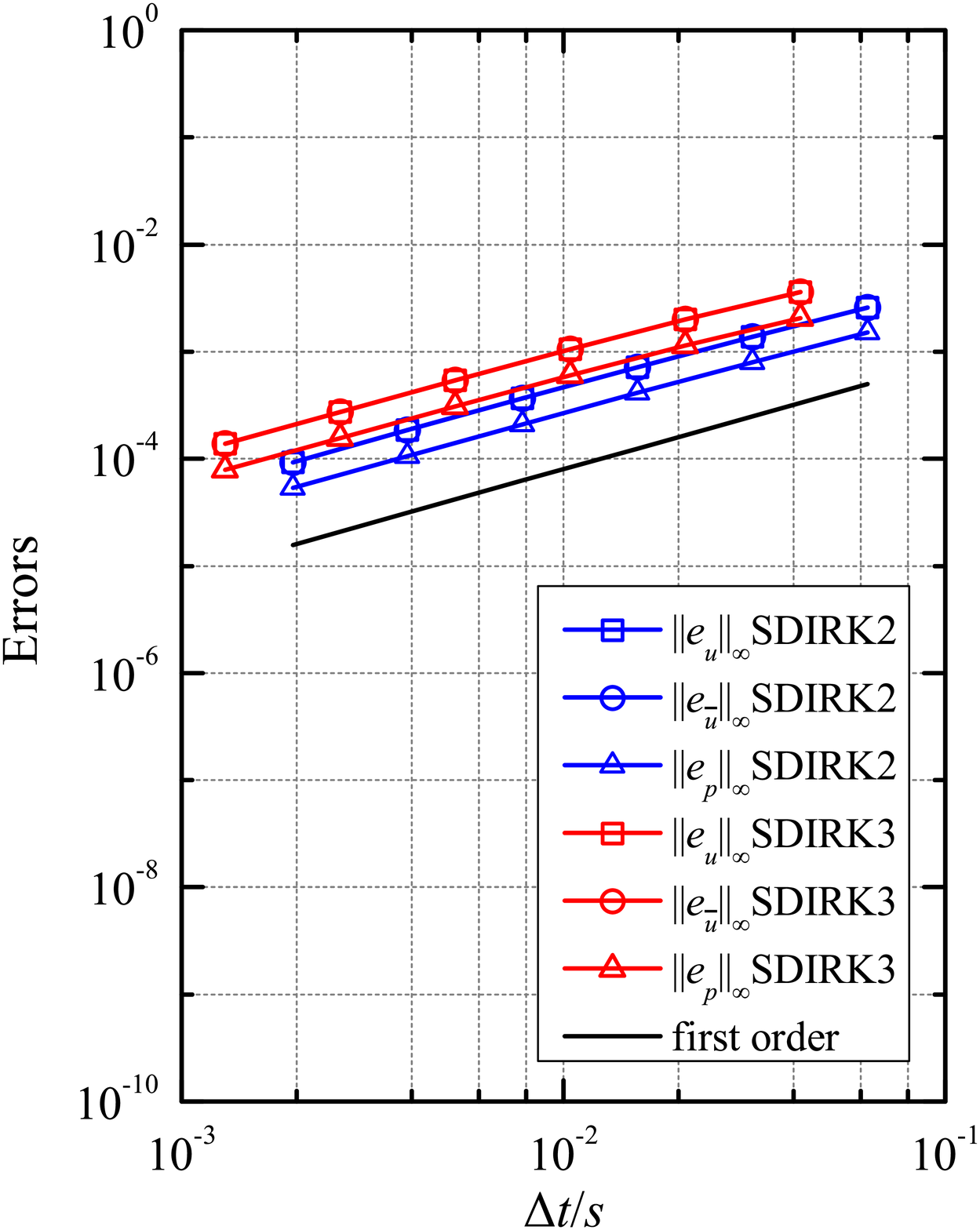}
        \caption{case \rom{4}}
    \end{subfigure}
      \caption{Convergence of the temporal errors when the momentum interpolation utilizing GFISDM-Choi} \label{figure8}
\end{figure}

\section{Discussion \& Concluding Remarks}

This study focuses on the formulation and solution of the incompressible Navier-Stokes equations on a collocated grid system. For the spatial discretization of the incompressible Navier-Stokes equations, we establish a new momentum interpolation framework for computing the velocities on cell-faces. The basic idea of the proposed framework is that we approximate the time derivative of cell-face velocities through a custom interpolation of the semi-discrete momentum equation of cell-center velocities, and discretize the approximation in time to obtain the values of cell-face velocities at discrete time points. It is noted that the finite volume discretization of the incompressible Navier-Stokes equations leads to a system of differential and algebraic equations with respect to the cell-center velocity and pressure. This system alone does not define a closed form solution and consequently cannot be solved. To address this issue, the momentum interpolation, in the proposed framework, introduces the differential equation of the cell-face velocity to the original system, by which the enlarged system has a closed form solution and can be strictly viewed as an index 2 differential-algebraic problem. The essential procedure of the proposed momentum interpolation framework is the evaluation of the time derivative of cell-face velocities. To accomplish the procedure, we develop a new momentum interpolation method named generalized face interpolation of the semi-discrete momentum equation, i.e., the GFISDM schemes.

In the other momentum interpolation methods available in literature, the original fully-discrete momentum equation of cell-center velocities normally goes through a series of transformation to deliver a rearranged equation. Then, the pseudo momentum equation of cell-face velocities is assumed to be consist of terms which are identical to those of the reorganized momentum equation of cell-center velocities. Finally, the unknown terms (which are face-centered scalar or vector field) in the momentum equation of cell-face velocities are approximated by custom interpolations of their counterparts (which are cell-centered scalar or vector field) in the momentum equation of cell-center velocities. In those momentum interpolation methods, the fields that interpolated are identical for each cell-face. 

In GFISDM schemes, we assume that by establishing a pseudo grid system regarding the values of flow variables at cell-face centroids, and discretizing the incompressible Navier-Stokes equation on it with a finite volume method, we obtain a semi-discrete momentum equation, i.e., (\ref{daem00}), of the face-centered velocity field $\bar{u}$. The semi-discrete equation of $\bar{u}$ is analogous to that of the cell-centered velocity field $u$. In both two semi-discrete momentum equations, the discretized diffusion and convection terms are expressed as the summation of the coefficient matrices (which are $K$ and $N$ for $u$, and $\bar{K}$ and $\bar{N}$ for $\bar{u}$) multiplied by the corresponding discretized velocities with the source terms (which are $b$ for $u$, and $\bar{b}$ for $\bar{u}$). In reality, the pseudo grid system does not exist, and consequently the $\bar{K}$, $\bar{N}$ and $\bar{b}$ terms of the $\bar{u}$ equation cannot be given explicitly. Therefore, we need to approximate the discretized diffusion and convection terms of cell-face velocities, i.e., the $\{\bar{K}+\bar{N}\}\bar{u}+\bar{b}$ field, through a custom face interpolation of cell-center velocities counterpart, i.e., the $\{K+N\}u+b$ field. Before the interpolation, we divide the $\{\bar{K}+\bar{N}\}\bar{u}+\bar{b}$ field into the terms $\{\text{diag}(\bar{A})\}\bar{u}$ and $\bar{H}$ where $\bar{A}$ and $\bar{H}$ are discrete face-centered scalar and vector fields, respectively. Each element of the $\bar{A}$ field is expressed as a linear combination of the corresponding element of $\text{diag}(K)$ and $\text{diag}(N)$ with the vectors $\alpha$ and $\beta$ storing the related coefficients. Hence, both the unknown $\bar{A}$ and $\bar{H}$ fields are $\alpha$- and $\beta$-dependent. 

To approximate the $i$-th elements of the $\bar{A}$ and $\bar{H}$ fields (i.e., $\bar{A}^i$ and $\bar{H}^{\boldsymbol{i}}$), we divide the $\{K+N\}u+b$ field into the terms $\{\text{diag}(A_i)\}u$ and $H_i$ with the expression (\ref{daem}) where $A_i$ and $H_i$ are discrete cell-centered scalar and vector fields, respectively. Then, we interpolate the $A_i$ and $H_i$ fields locally to face $i$, i.e., approximating the $\bar{A}^i$ and $\bar{H}^{\boldsymbol{i}}$ terms with $\eta^i(A_i)$ and $\boldsymbol{\eta}^{\boldsymbol{i}}(H_i)$, respectively. Therefore, different from the other momentum interpolation methods, the fields that interpolated to cell-faces are face-dependent in GFISDM schemes. The key factor leads us to enable such an utility in our method is because it is almost impossible, for a GFISDM scheme utilizing non-zero identical values for $\alpha$ and $\beta$, to attain the second order accuracy and convergence globally under all sorts of numerical setup. To validate this statement, we investigate the accuracy and convergence of the GFISDM schemes on Cartesian or curvilinear structured grid systems by considering the $\tilde{A}_i(\zeta)$ and $\tilde{H}_i(\zeta)$ functions which can be regarded as the continuous counterparts of the discrete $A_i$ and $H_i$ fields. The $\tilde{A}_i$ and $\tilde{H}_i$ functions are defined on a curvilinear axis $\zeta$ (which is established by linking the centroids of the cells involved in the interpolation regarding cell-face $i$ with a sufficiently smooth curve) with the domain $X_i$. We show that the smoothness and boundedness of the $\tilde{A}_i$ and $\tilde{H}_i$ functions on $X_i$ has a significant impact on the accuracy and convergence of the GFISDM schemes, and summarize two situations in which the schemes suffer from the loss of accuracy and order reduction.

The first situation is the $\tilde{A}_i$ function is discontinuous or non-differentiable on $X_i$. Once happens, the local interpolation of the $A_i$ and $H_i$ fields to the centroid of face $i$ cannot deliver accurate approximations for $\bar{A}^i$ and $\bar{H}^{\boldsymbol{i}}$. Hence, the accuracy and convergence of the GFISDM schemes will be damaged regardless of how accurate the interpolation scheme used for $\eta$ is. The second situation is the spatial derivatives of the $\tilde{A}_i$ function are unbounded as the grid spacing approaches to zero. The unboundedness of the $\tilde{A}_i$ and $\tilde{H}_i$ functions indicates the order reduction of the orders of convergence for the face interpolations of the $A_i$ and $H_i$ fields. Thus, the commonly used second-order linear interpolation scheme may not be accurate enough to ensure the second-order convergence of the GFISDM schemes.

The discussion in Section \ref{section2.4} indicates that the easiest approach for a GFISDM scheme to attain second-order convergence under all sorts of numeral setup is employing the $\alpha$ and $\beta$ vectors with all elements being zero (i.e., GFISDM-Z). This scheme approximates the $\{\bar{K}+\bar{N}\}\bar{u}+\bar{b}$ field through the face interpolation of the $\{H\}u+b$ field directly, therefore neither of the aforementioned situation occurs in it as along as the $\tilde{A}_i\tilde{u}+\tilde{H}_i$ term defined for each cell-face is sufficiently differentiable on its domain $X_i$. Consequently, GFISDM-Z delivers a second order convergence if the finite volume discretization of the momentum equation itself is convergent of order two and an interpolation scheme with order of accuracy no less than two is utilized in it. In contrast, it is much more difficult to achieve the same performance in order results when the GFISDM has non zero-elements in $\alpha$ or $\beta$. For this type of GFISDM schemes to attain the second order convergence, the first priority is to compensate order reduction caused by the unboundedness of the $\tilde{A}_i$ or $\tilde{H}_i$ functions using higher order interpolation schemes, as this issue affects the convergence results globally. Then, the GFISDM should make sure the $\tilde{A}_i$ and $\tilde{H}_i$ functions are sufficiently differentiable on their domain $X_i$ for all cell-faces. To achieve this goal, we just need to set $\alpha^i$ or $\beta^i$ to constant zero if the $\tilde{A}_{i,K}$ or $\tilde{A}_{i,N}$ function is discontinuous or non-differentiable on $X_i$ within the solution time domain. Based on the above requirements, we develop a GFISDM scheme (i.e., GFISDM-H) which preserves the characteristics of GFISDM-Yu as much as possible without suffering from the order reduction.

The proposed momentum interpolation framework offers an alternative approach to examine if a momentum interpolation method can attain the time-step size-independent converged solution for steady-state flows. To carry out the examination, we first seek for particular $\alpha$, $\beta$ and $\eta$ with which the GFISDM once discretized in time can reproduce the scheme of the momentum interpolation method that needs to be examined. Then, by examining the time-step size-dependency of $\eta$, we can easily determine if the converged solutions are time-step size-dependent. Using this approach, we examined the momentum interpolation methods of \cite{choi1999}, \cite{yu2002} and \cite{pascau2011} and denote the GFISDM schemes corresponding to those methods as GFISDM-Choi, GFISDM-Yu and GFISDM-Pascau respectively. The results indicate that, unlike the other two momentum interpolation methods, Choi's scheme is not unconditionally time-step size-independent. More specifically, when the diagonal elements of $K$ or $N$ are not identical, GFISDM-Choi is time-step size-dependent. Moreover, GFISDM-Choi can be regarded as a perturbed system of GFISDM-Yu with the perturbations of $O(h)$, and it converges to GFISDM-Yu as the time-step size $h$ approaches to zero. This finding further indicates that if Choi's momentum interpolation scheme is used for unsteady flow problems and the elements of $\text{diag}(K)$ or $\text{diag}(N)$ are not identical, the errors in temporal solutions will show first order convergence. This issue on Choi's scheme ought to be concerned as it is still being used in the latest public version (i.e., version 6.0) of the OpenFOAM{\textregistered}. 

For the time-marching of the semi-discrete incompressible Navier-Stokes equations, we develop a new method of applying implicit Runge-Kutta schemes, i.e., (\ref{RKformulation}), and analyze its convergence results mathematically. Compared to the standard direct approach, the new one significantly reduces the cost of mathematical calculation in the momentum interpolation and delivers the higher-order pressure without requiring an additional computational effort. The direct approach and the proposed method, when applied respectively to the semi-discrete Navier-Stokes system, take different values of the source term in the internal stage continuity equation. Instead of taking the explicitly prescribed values, the source term in the internal stage continuity equation given by the proposed method consists of two components, a Runge-Kutta approximation of the source term and a pre-determined variable which ensures the continuity equation is strictly satisfied at the new time-step. 

Theorem \ref{theorem1} gives the convergence results of the proposed method. In the theorem, an implicit Runge-Kutta scheme with a non-singular coefficient matrix is applied to an index 2 differential-algebraic problem (\ref{I2DAE}) (which can be regarded as a generalization of the semi-discrete incompressible Navier-Stokes system on a stationary mesh) using the scheme (\ref{RKformulation}). The theorem implies that the orders of convergence for the differential and algebraic components delivered by the proposed method are identical to those of the considered Runge-Kutta scheme for general index 1 problems. The proof the theorem is done by three steps: first, we compare the system of equations of the proposed method with the system of equations obtained by applying the considered Runge-Kutta scheme to an index 1 problem which is equivalent to the problem (\ref{I2DAE}), and demonstrate that the former system can be regarded a perturbed system of the latter; second, we investigate the perturbations and their accumulation over the whole time domain (based on Lemma \ref{lemma2}); finally, based on the influence of perturbations and the convergence results of the considered Runge-Kutta scheme for index 1 problems, we therefore obtain those results of the proposed method for the problem (\ref{I2DAE}). Note that we also consider the effect of the residual errors, resulting from solving the non-linear Runge-Kutta internal stage equations, on the global convergence results in our proof. Such a consideration is very necessary since the solutions of the non-linear equations require a fixed-point iteration method and the iterations are usually terminated before the exact solutions are achieved. Theorem \ref{theorem1} also suggests that stiff-accurate schemes are desirable in the proposed method since they can attain the classical order of convergence for both the velocity and pressure. Another advantage of the stiff-accurate schemes over the others is that the velocity and pressure obtained in a new time-step do not relate to the pressure of the previous time-step.

We also investigate the storage consumption required by stiff-accurate DIRK schemes. Inspired by the work of \cite{van1972} and \cite{kennedy2000}, two families of low storage stiff-accurate DIRK schemes, i.e., (\ref{lowStorage1}) and (\ref{lowStorage2}) with their coefficient tableau of the forms (\ref{DIRKfamily1}) and (\ref{DIRKfamily2}) respectively, are developed to reduce the storage consumption during the implementation. The first family of low storage schemes requires two registers for the velocity and one register for the pressure. The second family of low storage schemes requires three registers for the velocity and one register for the pressure. Based on the degrees of freedom allowable in their coefficient tableau and the order conditions of Runge-Kutta methods, the two families of low storage stiff-accurate DIRK schemes require two Runge-Kutta internal stages for order 2 and three internal stages for order 3. We then demonstrate a two-stage second-order SDIRK scheme (denoted by SDIRK2) and a three-stage third-order SDIRK scheme (denoted by SDIRK3) as the examples for the two families of low storage schemes. It should be emphasized that the coefficient tableau of the two SDIRK schemes were first presented by \cite{alexander1977} for solving ordinary differential equations. For the solution of the discretized non-linear Navier-Stokes system at Runge-Kutta internal stages, we present a solution algorithm which employs the Picard linearization to deal with the non-linearity of the momentum equation and uses the PISO algorithm to handle the pressure-velocity coupling of the linearized Navier-Stokes system. This solution algorithm ensures the enforcement of the discretized continuity equation at every Runge-Kutta internal stage even in the case that the Picard iterations are terminated before achieving the exact solutions. 

For the examination of the accuracy and convergence of the presented methods in momentum interpolation and temporal discretization, we consider a benchmark problem, the Taylor-Green Vortex. Computations are conducted on two dimensional Cartesian grids. Different boundary conditions and numerical schemes are considered to demonstrate the performance of the proposed methods under all sorts of situations. The verification of spatial accuracy is carried out in two stages. In the first stage, we examine the convergence of the semi-discrete incompressible Navier-Stokes systems based on GFISDM-Z and GFISDM-H respectively. In the second stage, we demonstrate the errors of the velocity and pressure after the time-marching of the semi-discrete incompressible Navier-Stokes systems. In both stages, GFISDM-Z and GFISDM-H attain second-order of convergence for both the velocity and pressure.

In the verification of temporal accuracy, we only consider GFISDM-H for simplicity. The main concerns of the temporal accuracy verification, in addition to the convergence results, are the accuracy per computational cost and the influence of the Picard iteration. The results indicate that, for both SDIRK2 and SDIRK3, the temporal solutions of the velocity and pressure show the classical order of convergence with a sufficient number of Picard iterations. In addition, SDIKR3, as the higher-order scheme, requires more Picard iterations than SDIRK2 to reach its expected order of convergence, especially for the semi-discrete incompressible Navier-Stokes system with strong non-linearity. Moreover, when the classical order of convergence is reached, SDIRK3 performs better than SDIRK2 in terms of the accuracy per computational cost with the same number of Picard iterations. To demonstrate the superiority of the proposed method over the standard one in convergence results, we also compute the temporal errors of the direct approach and the result shows SDIRK2 and SDIRK3 just deliver first-order temporal-accurate pressures for the cases with unsteady Dirichlet boundary conditions. At the end of the verification, we examine the negative effect of Choi's momentum interpolation scheme on temporal convergence results, and numerical results validate our prediction.

Finally, it should be emphasized again the analysis of the accuracy and convergence of GFISDM schemes presented in Section \ref{section2.4} is limited to the cases in which Cartesian or curvilinear structured grids are considered and polynomial-based methods are adopted in the interpolation schemes. Also note that the temporal convergence results of the proposed method for the time-marching of the semi-discrete incompressible Navier-Stokes equations are based on the assumption that the mesh system is static. Those results do not hold for deforming mesh problems. Meanwhile, the implementation of the proposed momentum interpolation framework on deforming mesh problems is very easy, and the order results still apply. The only issue needs to be concerned during the implementation is the enforcement of the geometric conversation law. We will pick up this problem along with a detailed discussion on the scheme of GFISDM-H in our future work.

\section*{Acknowledgement}
The authors gratefully acknowledge the support provided in part by the NSF Grant No. 1562244 and No. 1612843.

\bibliographystyle{elsarticle-harv}
\bibliography{main}

\end{document}